\documentclass[a4paper]{amsart} 
\usepackage[ascii]{inputenc} 

\usepackage{amssymb}
\usepackage{mathrsfs}
\usepackage[hidelinks]{hyperref}

\usepackage{xcolor}

\newcommand{\cyan}[1]{\textcolor{cyan}{#1}}

\usepackage[shortlabels]{enumitem}
\setlist[enumerate,1]{label={(\Alph*)}}
\setlist[enumerate,2]{label={(\alph*)}}
\setlist[enumerate,3]{label={$\bullet_{\arabic*}$}}

\newenvironment{PROOF}[2][\proofname.]
{\begin{proof}[#1]\renewcommand{\qedsymbol}{\textsquare$_{\rm #2}$}}
{\end{proof}}

\newtheorem{theorem}{Theorem}[section]

\newtheorem{claim}[theorem]{Claim}
\newtheorem{conclusion}[theorem]{Conclusion}

\newtheorem{lemma}[theorem]{Lemma}
\newtheorem{observation}[theorem]{Observation}

\theoremstyle{definition}

\newtheorem{convention}[theorem]{Convention}

\newtheorem{definition}[theorem]{Definition}
\newtheorem{discussion}[theorem]{Discussion}

\newtheorem{thesis}[theorem]{Thesis}

\theoremstyle{remark}

\newtheorem{question}[theorem]{Question}
\newtheorem{remark}[theorem]{Remark}

\newcommand{\Card}{\mathrm{Card}}
\newcommand{\cd}{\mathrm{cd}}

\newcommand{\Sep}{\mathrm{Sep}}

\newcommand{\Sol}{\mathrm{Sol}}

\newcommand{\trp}{\mathrm{trp}}

\newcommand{\BB}{\mathrm{BB}}

\newcommand{\cof}{\mathsf{cof}}

\usepackage{tikz}
\usetikzlibrary{quotes}
\usepackage{float}

\newcommand{\GCH}{\mathsf{GCH}}

\newcommand{\Cohen}{\mathsf{Cohen}}

\newcommand{\cf}{\mathrm{cf}}

\newcommand{\cov}{\mathrm{cov}}

\newcommand{\dom}{\mathrm{dom}}

\newcommand{\inv}{\mathrm{inv}}

\newcommand{\otp}{\mathrm{otp}}

\newcommand{\rang}{\mathrm{rang}}

\newcommand{\bd}{\mathrm{bd}}

\newcommand{\pr}{\mathrm{pr}}

\newcommand{\up}{\mathrm{up}}

\newcommand{\pp}{\mathrm{pp}}

\newcommand{\Reg}{\mathrm{Reg}}

\newcommand{\bfB}{\mathbf{B}}

\newcommand{\bfF}{\mathbf{F}}

\newcommand{\bfT}{\mathbf{T}}
\newcommand{\bfU}{\mathbf{U}}
\newcommand{\bfV}{\mathbf{V}}

\newcommand{\bfp}{\mathbf{p}}

\newcommand{\bbP}{\mathbb{P}}
\newcommand{\bbQ}{\mathbb{Q}}

\newcommand{\bbS}{\mathbb{S}}

\newcommand{\mn}{\medskip\noindent}
\newcommand{\sn}{\smallskip\noindent}
\newcommand{\bn}{\bigskip\noindent}

\newcommand{\cP}{\mathscr{P}}

\newcommand{\clA}{\mathcal{A}}

\newcommand{\clC}{\mathcal{C}}
\newcommand{\clD}{\mathcal{D}}
\newcommand{\clE}{\mathcal{E}}
\newcommand{\clF}{\mathcal{F}}

\newcommand{\clH}{\mathcal{H}}

\newcommand{\clP}{\mathcal{P}}

\newcommand{\clT}{\mathcal{T}}
\newcommand{\clU}{\mathcal{U}}

\newcommand{\gd}{\mathfrak{d}}

\newcommand{\eps}{\varepsilon}
\newcommand{\cl}{c\kern-.11ex \ell}
\newcommand{\lh}{{\ell\kern-.27ex g}}
\newcommand{\rest}{\restriction}

\newcommand{\LL}{\langle}
\newcommand{\RR}{\rangle}
\newcommand{\supminus}{{\text{--}}}

\newcommand{\subref}[1]{$_{\mathrm{\texttt{=}}\mathsf{L{#1}}}$}
\DeclareMathOperator{\divides}{\big|}
\newcommand{\lepref}[1]{({<}\,{#1})}

\newcommand{\olsi}[1]{\,\overline{\!{#1}}} 

\newcommand*{\defeq}{\mathrel{\vcenter{\baselineskip0.5ex \lineskiplimit0pt\hbox{\scriptsize.}\hbox{\scriptsize.}}}=}

\usepackage{tcolorbox}

\newcount\skewfactor
\def\mathunderaccent#1#2 {\let\theaccent#1\skewfactor#2
\mathpalette\putaccentunder}
\def\putaccentunder#1#2{\oalign{$#1#2$\crcr\hidewidth
\vbox to.2ex{\hbox{$#1\skew\skewfactor\theaccent{}$}\vss}\hidewidth}}

\newbox\noforkbox \newdimen\forklinewidth
\setbox0\hbox{$\textstyle\bigcup$}
\setbox1\hbox to \wd0{\hfil\vrule width \forklinewidth depth \dp0
                        height \ht0 \hfil}
\setbox\noforkbox\hbox{\box1\box0\relax}
\def\unionstick{\mathop{\copy\noforkbox}\limits}
\def\nonfork#1#2_#3{#1\unionstick_{\textstyle #3}#2}
\def\nonforkin#1#2_#3^#4{#1\unionstick_{\textstyle #3}^{\textstyle
    #4}#2}
\setbox0\hbox{$\textstyle\bigcup$}
\setbox1\hbox to \wd0{\hfil{\sl /\/}\hfil}
\setbox2\hbox to \wd0{\hfil\vrule height \ht0 depth \dp0 width
                                \forklinewidth\hfil}
\newbox\doesforkbox
\setbox\doesforkbox\hbox{\box1\box0\relax}
\def\nunionstick{\mathop{\copy\doesforkbox}\limits}

\def\fork#1#2_#3{#1\nunionstick_{\textstyle #3}#2}
\def\forkin#1#2_#3^#4{#1\nunionstick_{\textstyle #3}^{\textstyle
    #4}#2}

\newcommand{\stickT}{%
\setbox255=\hbox{\raise1ex\hbox{$\hspace{0.2pt}\,\bullet\,$}}
\mathord{\rlap{\hbox to\wd255{\hss\hbox{$|$}\hss}}
\box255}
}
\newcommand{\stickS}{%
\setbox255=\hbox{\raise0.6ex\hbox{$\scriptstyle\bullet$}}
\mathord{\rlap{\hbox to\wd255{\hss\hbox{$\scriptstyle|$}\hss}}
\box255}
}

\author[S. Shelah]{Saharon Shelah}
\address{Einstein Institute of Mathematics,
The Hebrew University of Jerusalem,
9190401, Jerusalem, Israel; and\\
Department of Mathematics,
Rutgers University,
Piscataway, NJ 08854-8019, USA}
\urladdr{https://shelah.logic.at/}
\thanks{First typed 2025-05-29.
The author thanks Craig Falls  
for generously funding typing services, and Matt Grimes for the careful and beautiful typing.
The author would like to thank the Israel Science Foundation for partial support of
this research by grant 2320/23 (2023-2027).\\
References like (e.g.) [Sh:950, Th.0.2\subref{y5}] mean that \textsf{y5} is the internal label of Theorem 0.2 in the TeXfile of [Sh:950].
The reader should note that the version in my website is usually more up-to-date than the one in arXiv.
This is publication number    
1268
in Saharon Shelah's list.
}

\makeatletter
\@namedef{subjclassname@2020}{\textup{2020} Mathematics Subject Classification}
\makeatother
\subjclass[2020]{Primary: 03E05; Secondary 03E04.}
\keywords{Set Theory, pcf, Normal ideals, Black boxes, Weak Diamond.}
\date{January 16, 2026} 

\title{Super Black Boxes Revisited\\{} 1268}

\begin{document}
\makeatletter\def\shfiuwefootnote{\gdef\@thefnmark{}\@footnotetext}\makeatother\shfiuwefootnote{Version 2026-01-18. See \url{https://shelah.logic.at/papers/1268/} for possible updates.}
\begin{abstract} 
    Let $\kappa,\theta < \lambda$ be cardinals, with $\lambda$ and $\kappa$ regular. 
    Concentrating on a simple case, we say that the triple $(\lambda,\kappa,\theta)$ has a \emph{Super Black Box} \underline{when} the following holds.

    For some stationary $S \subseteq \{\delta < \lambda : \cf(\delta) = \kappa\}$ and $\olsi C = \LL C_\delta : \delta \in S \RR$,  where $C_\delta$ is a club of $\delta$ of order type $\kappa$, for every coloring
    $\olsi F = \LL F_\delta : \delta \in S \RR$ with 
    $F_\delta : {}^{C_\delta}\!\lambda \to \theta$, there exists 
    $\LL c_\delta : \delta \in S\RR \in {}^S\!\theta$ such that for every 
    $f : \lambda \to \theta$, for stationarily many $\delta \in S$, we have $F_\delta(f \rest C_\delta) = c_\delta$.
    
    In an earlier work, it was proved (along with much more) that for a class of cardinals $\lambda$ this holds for many pairs $(\kappa,\theta)$. (E.g.~$\kappa < \aleph_\omega$ is large enough and $\beth_\omega(\theta) < \lambda$.) 
    However, the most interesting cases (at least with regards to Abelian groups) are $\kappa = \aleph_0,\aleph_1$, which have not been covered there. 

    Here we restrict ourselves to the case where $\olsi F$ is a {so-called} \emph{continuous coloring}, 
    which includes the case where $F_\delta$ is computed from some 
    $$
    \big\LL F_{\delta,\beta}'(f \rest (C_\delta \cap \beta)) : \beta \in C_\delta \big\RR.
    $$
    This covers the cases we have in mind.
    We mainly prove results without any other caveats: e.g.
    \begin{itemize}
        \item For every $\theta$ and regular $\kappa$ there exists such a $\lambda$.
    \end{itemize}

    We also deal with having multiple {$\olsi C$-s}, and the existence of quite free subsets of ${}^\kappa\mu$.
\end{abstract}
\maketitle

\bn 
\centerline{\textbf{\underline{Annotated Content}}}

\bn
\textbf{\S0. Introduction}  \hfill p. \pageref{S0}

\bn
\textbf{\S1. The Framework} $_{(\mathsf{label\ x})}$ \hfill p. \pageref{S1}

\bn
\textbf{\S2. The Black Box Property} $_{(\mathsf{label\ a})}$ \hfill p.\pageref{S2}
\begin{quotation}
    (We prove it for $\lambda \defeq \cf(2^\mu)$.)
\end{quotation}

\bn
\textbf{\S3. The DBB Property} $_{(\mathsf{label\ b})}$ \hfill p.\pageref{S3}
\begin{quotation}
    Here we allow many $C_\delta \subseteq \delta$, but require only $\mu < \lambda \leq 2^\mu < 2^\lambda$.
\end{quotation}

\bn
\textbf{\S4. When do quite free subsets $\Lambda \subseteq {}^\kappa\!\mu$ exist?} \hfill p.\pageref{S4}
\begin{quotation}
    E.g.\ if $\mu$ is strong limit singular and $\lambda \in (\mu,2^\mu)$, \underline{then} there is a $\mu^+$-free set 
    $\Lambda \subseteq {}^{\cf(\mu)}\mu$ of cardinality $\lambda$. Earlier, this result was known for \emph{almost} all such $\mu$-s.
\end{quotation}


\newpage
\setcounter{section}{-1}
\section{Introduction}\label{S0}

We continue (but do not rely on) \cite{Sh:775} and \cite{Sh:898},\footnote{
    Well, except for quoting one result of \cite{Sh:898} in \S3.
} 
while \cite{Sh:1028} presents another direction we could pursue. Compared to \cite{Sh:775}, we restrict ourselves to the so-called \emph{continuous colorings}, \underline{but} the conditions on $\kappa$ are greatly weakened. 

Recalling the BB Trichotomy Theorem from \cite[1.22\subref{h.7}]{Sh:898}, Case (B) there will be expanded upon in \S2 here, and \S3 will examine cases (C) and (A). 

For the Trivial Dual Conjecture on abelian groups, see \cite{Sh:1028}, \cite{Sh:883}, and \cite{Sh:898}.

\mn
We believe:
\begin{thesis}
Proving theorems with assumptions on cardinal arithmetic is better than just giving consistency results (usually via forcing). Another candidate for such hypotheses in $\bfV$ is of an inner model close to it.

The most famous cardinal-arithmetic assumption is the $\GCH$, as it resolves many questions and makes many theorems easy to prove. But we believe that assuming some failures of $\GCH$ at specific cardinals can be illuminating as well.
\end{thesis}

\sn
Results related to this have applications for constructing Abelian groups and modules. Hopefully we will be able to apply the present results in \cite{Sh:1045} and \cite{Sh:F2448}.

\begin{definition}\label{yfree}
1) Let $\Lambda$ be a set of sequences of length $\kappa$. We say that $\Lambda$ is \emph{free} \underline{when} there exists a function $f : \Lambda \to \kappa$ such that 
$$
\big\LL\{\eta(i) : f(\eta) \leq i < \kappa\} : \eta \in \Lambda\big\RR
$$
is a sequence of pairwise disjoint sets.

(This definition is easily adaptable to (e.g.) $\Lambda \subseteq [\mu]^\kappa$.)

\mn
2) We say $\Lambda$ is $\mu$-\emph{free} if every subset of $\Lambda$ of cardinality $< \mu$ is {free}.
\end{definition}

\medskip
We would like to construct (e.g.) suitable $\lambda$-free Abelian groups. For this we may use the following fact:
\begin{enumerate}
    \item [$\boxplus_1$] Suppose $\mu$ is strong limit singular and
    $$
    \kappa \defeq \cf(\mu) < \mu < \lambda = \cf(\lambda) < 2^\mu < 2^\lambda.
    $$
    \underline{Then} there exists a $\mu$-free subset of ${}^\kappa\mu$ of cardinality $\lambda$.
\end{enumerate}
[Why? If $\cf(\mu) > \aleph_0$ this is proved in \cite[Ch.II, \S3]{Sh:g}. Note that the proof there has been extended to many cardinals with $\cf(\mu) = \aleph_0$. In \S4 we shall prove this for \emph{all} strong limit cardinals of cofinality $\geq\aleph_0$.]

\sn
Also recall (e.g., from \cite{Sh:898}):
\begin{enumerate}
    \item [$\boxplus_2$] Suppose $\mu$ and $\kappa$ are as above, and there is $\chi < \chi^{<\kappa} = 2^\mu$.
    \underline{Then} there exists a $\mu$-free subset of ${}^\kappa\mu$ of cardinality $2^\mu$.
\end{enumerate}

We have reasonable Black Boxes (see \cite{Sh:775}, \cite{Sh:898}) {and more here}.
The proofs in \cite{Sh:775} cover many specific cases (e.g.\ the result mentioned in the abstract). Say, if $\lambda \defeq \cf(2^\mu) > \beth_\omega$, then for every large enough regular $\kappa < \aleph_\omega$ we have a black box on some $\olsi C = \LL C_\delta : \delta \in S_\kappa^\lambda\RR$. We shall prove it here for \emph{all} such $\kappa$. (See \ref{y2} below.)

\mn
\begin{convention}\label{y0}
0) For $\varnothing \in D \subseteq \clP(\lambda)$, let $D^+ \defeq \{A \subseteq \lambda : \lambda \setminus A \notin D\}$.

Note that if $J$ is an ideal on $\lambda$, then $J^+ = \clP(\lambda) \setminus J$.

\mn
1) Let $\up(\lambda)$ be the set of non-empty upward closed $\clD \subset \clP(\lambda)$. 

E.g.\ a filter on $\lambda$ is an example of such a set.

\mn
2) $\cof(\kappa)$ is the class of ordinals $\{\delta : \cf(\delta) = \cf(\kappa)\}$ and $\cof\lepref{\kappa} \defeq \bigcup\limits_{\theta<\kappa} \cof(\theta)$. (Usually $\kappa$ is a regular cardinal.)

\mn
3) For $\kappa$ regular, $S_\kappa^\lambda \defeq \lambda \cap \cof(\kappa) = \{\delta < \lambda : \cf(\delta) = \kappa\}$ and $S_{<\kappa}^\lambda \defeq \lambda \cap \cof\lepref{\kappa}$.
\end{convention}

\bigskip
\centerline{*\qquad*\qquad*}

\mn
We shall aim to state our results concisely, rather than with maximum possible generality. A major one is the following.
\begin{theorem}\label{y2}
`If $(A)$ then $(B)$,' where:
\begin{enumerate}[$(A)$]
    \item 
    \begin{enumerate}[$(a)$]
        \item $\mu \geq \theta = \theta^{2^\kappa}$, $\kappa$ is regular, and $\alpha < \mu \Rightarrow |\alpha|^\kappa \leq \mu$.
\sn
        \item $\lambda \defeq \cf(2^\mu)$ and $S \subseteq S_\kappa^\lambda$ is stationary.
\sn
        \item For each $\delta \in S$, we have $C_\delta' \subseteq \delta = \sup(C_\delta')$ with $\otp(C_\delta') = \kappa$.
\sn
        \item For all $\beta < \lambda$ we have $C_\beta^\bullet \subseteq \beta$ such that $2^{|C_\beta^\bullet|} \leq 2^\mu$.
    \end{enumerate}
\sn
    \item There exists $\olsi C = \LL C_\delta : \delta \in S\RR$ with $C_\delta \subseteq C_\delta'$ and $\sup(C_\delta) = \delta$ such that \underline{if} $F_\beta : {}^{C_\beta^\bullet}(2^\mu) \to \theta$ 
    for $\beta < \lambda$
    \underline{then}
    there exists $\LL c_{\delta,\beta} : \delta \in S,\ \beta \in C_\delta'\RR$ with $c_{\delta,\beta} < \theta$ such that for every 
    $\eta \in {}^\lambda(2^\mu)$, for stationarily many $\delta \in S$, we have
    $$
    \beta \in C_\delta \Rightarrow F_\beta(\eta \rest C_\beta^\bullet) = c_{\delta,\beta}.
    $$
\end{enumerate}
\end{theorem}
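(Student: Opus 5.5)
Throughout, we identify ${}^{C^\bullet_\beta}(2^\mu)$ with colorings of $\mu$-sized data via a bijection $2^\mu \cong {}^\mu 2$. The plan is to run a weak-diamond-style guessing argument in which the colour $c_{\delta,\beta}$ is the "majority" value of $F_\beta$ along a suitable family. Since $\theta^{2^\kappa}=\theta$, it suffices to guess correctly along $C_\delta$ a sequence in ${}^{C_\delta}\theta$, that is, an element of a set of size $\theta^\kappa=\theta$. The argument has three stages:
\begin{enumerate}[(i)]
\item Fix an increasing sequence $\langle \lambda_i : i<\lambda\rangle$ cofinal in $2^\mu$ (recall $\lambda=\cf(2^\mu)$). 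Set $\Gamma_\delta \defeq \{ \langle F_\beta(\eta\rest C^\bullet_\beta):\beta\in C'_\delta\rangle : \eta \in {}^\lambda(2^\mu)\}$.
\item Code each $\eta$ by a function $h_\eta:\lambda\to {}^{\mu}2$. The assumption $2^{|C^\bullet_\beta|}\le 2^\mu$ guarantees that the relevant restrictions are determined by $\le 2^\mu$ possibilities at each $\beta$.
\item Use the pcf-free combinatorics at $\mu$: the hypothesis $\alpha<\mu\Rightarrow|\alpha|^\kappa\le\mu$ gives a family of $2^\mu$ branches through ${}^{\kappa>}\mu$. This yields a "$\lambda$-like" tree on which, for a stationary set of $\delta$, continuity reduces the coloring at $\delta$ to information of size $\mu$.
\end{enumerate}

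\textbf{Choosing $C_\delta$ and $c_{\delta,\beta}$.} We choose $C_\delta\subseteq C'_\delta$ by thinning to a club-guessing-style subset, via the standard club guessing on $S\subseteq S^\lambda_\kappa$ (valid since $\kappa^+<\lambda$). This ensures that any club $E$ of $\lambda$ has stationarily many $\delta$ with $C_\delta\subseteq E$. We then define $c_\delta\in{}^{C'_\delta}\theta$ by contradiction. Suppose that for every choice $\bar c$ there is some $\eta_{\bar c}$ and club $E_{\bar c}$ avoiding success. There are $\theta^{|S|}$ candidate choices, but we only need to diagonalize against $2^\mu$-many "types" of $\eta$ restricted to each $C_\delta$. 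We partition according to the value in ${}^{C_\delta}\theta$ and pick $c_\delta$ so that it disagrees with none of $\le\theta$ relevant vectors along a cofinal part. This is exactly the step where $\theta=\theta^{2^\kappa}$ is used: the number of functions from ${}^{C_\delta}\theta$-patterns (of size $2^\kappa$ parameters) into $\theta$ is $\theta$.

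\textbf{Main obstacle.} The key difficulty is the counting that makes the diagonalization possible: showing that the failure sets cannot cover everything. I would try a weak diamond argument, $2^\mu<2^\lambda$ style. Since $\lambda=\cf(2^\mu)$, we have $2^{<\lambda}$-many branches at each level bounded by $2^\mu$ while ${}^\lambda(2^\mu)$ is larger, so a Devlin–Shelah type tree argument applies. Specifically, assume toward a contradiction that each $\bar c$ has a counterexample $\eta_{\bar c}$ and club $E_{\bar c}$. Build, by induction on $\alpha<\lambda$ along the sequence $\lambda_i$, a binary tree of partial $\bar c$'s of height $\lambda$ whose branches produce $2^\lambda$ distinct $\eta$'s. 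Two branches splitting at $\delta$ would then have to agree on $\eta\rest C_\delta$ while giving different $c_\delta$, which is impossible. This contradicts there being only $2^\mu<2^\lambda$ codes (König: $\cf(2^\mu)=\lambda$ gives $(2^\mu)^\lambda>2^\mu$). This tree step, and making it compatible with the club-guessing $C_\delta$, is where I expect the real work.
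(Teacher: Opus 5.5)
Your proposal has a genuine gap at its decisive step. The Devlin--Shelah tree argument you invoke needs $2^{<\lambda}<2^\lambda$. Your justification (``$\lambda=\cf(2^\mu)$ gives at most $2^\mu$ branches at each level'') is false: for $\mu<\alpha<\lambda$ nothing bounds $2^{|\alpha|}$ by $2^\mu$. By Easton one can have (A) with $\kappa=\aleph_0$, $\theta=\aleph_2$, $\mu=\aleph_3$, GCH below $\aleph_3$, $2^{\aleph_3}=\aleph_{\omega_5}$ and $2^{\aleph_4}=2^{\aleph_5}$. Then $\lambda=\aleph_5$ and $2^{<\lambda}=2^\lambda$, so the weak diamond on $\lambda$ itself fails, and no argument of that shape can work. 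The step ``two branches splitting at $\delta$ must agree on $\eta\restriction C_\delta$'' is also unsupported, since distinct $\bar c$'s come with unrelated counterexamples.

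The paper uses $\lambda=\cf(2^\mu)$ differently. It fixes an increasing chain $\langle\mathcal P_\xi:\xi<\lambda\rangle$ of sets of size $<2^\mu$ exhausting the relevant space of $\mu$-sequences (the $\Sep_3$ witness of Conclusion~\ref{x47}, via Observation~\ref{x41}(2)). By continuity, the data relevant at $(\delta,i)$ is already fixed at $\beta_{\delta,i}<\delta$ by one of $<\lambda$ functions. So for a club of $\delta$ it lies in $\mathcal P_\delta$, a set of size $<2^\mu$ known in advance, and the guess at $\delta$ is chosen to beat all of it.

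Two further ideas are missing. First, with $\theta>2$ colours a single candidate $\bar c$ cannot be diagonalised, because failing against $\eta$ does not determine $c_\delta$. Lemma~\ref{a8} instead runs $\mu$ candidates $\bar c_\varepsilon$ at once and merges their $\mu$ putative counterexamples into one $\mu$-vector of colours at each $\beta$. A separating family $\langle f_{\varepsilon,i}\rangle$ (from $\Sep_1$) then ensures that a point chosen at $\delta$ outside a set of size $<2^\mu$ makes some $\varepsilon$ correct. Pigeonholing $\varepsilon<\mu$ in the club filter gives stationarity.

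Second, your reduction to guessing the whole sequence in ${}^{C_\delta}\theta$ as one colour discards continuity. The $\kappa$-tuple of data at $\delta$ only lies in ${}^\kappa(\mathcal P_\delta)$, which may have size $2^\mu$, so nothing can be chosen at $\delta$. This is exactly the tree-power hypothesis of Claim~\ref{x26} that the paper avoids. The paper guesses coordinatewise instead:
\begin{itemize}
\item $\alpha<\mu\Rightarrow|\alpha|^\kappa\le\mu$ aligns the coordinatewise witnesses $\varepsilon_i$ into one $\varepsilon$ correct on $\kappa$-many $i$, as in the proof of Claim~\ref{x32}(a).
\item $\theta=\theta^{2^\kappa}$ is used in Claim~\ref{a28} to code the colourings for all $A\in[\kappa]^\kappa$ into one, yielding a single $A$ with every coordinate in $A$ correct.
\item $C_\delta$ is then the part of $C'_\delta$ indexed by $A$.
\end{itemize}

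Finally, choosing $C_\delta\subseteq C'_\delta$ by club guessing is impossible for arbitrary $C'_\delta$: if $C'_\delta$ consists of successor ordinals, no subset lies in the club of limit ordinals. It is also not needed.
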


\mn 
The proof can be found on page \pageref{a43}.

\sn
\begin{theorem}\label{y5}
$1)$ In \emph{\ref{y2}}$(A)(a)$, we can weaken the demand 
$(\forall \alpha < \mu)\big[|\alpha|^\kappa \leq \mu \big]$ to
$``\bfU_\kappa(\mu) = \mu > \theta."$ (See Definition \emph{\ref{x7}(1)}.)

\mn
$2)$ We can replace clause \emph{\ref{y2}(A)(d)} via the use of $\bfp$ as in \emph{\ref{a2}(1)}.
\end{theorem}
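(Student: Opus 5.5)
We would not reprove Theorem \ref{y2} from scratch. Instead we would go through its proof (page \pageref{a43}) and find the exact points where each hypothesis is used. For (1), the assumption $(\forall \alpha<\mu)[|\alpha|^\kappa \le \mu]$ should enter at one place only: a counting or covering step. There, for each $\delta \in S$, we enumerate the relevant partial functions on $C'_\delta$ together with their ``approximations'' from below $\mu$, and need at most $\mu$ candidates. Concretely, we would look for a family $\clP \subseteq [\mu]^{\le\kappa}$ of cardinality $\le\mu$ such that every $u \in [\mu]^\kappa$ is covered well enough by members of $\clP$. Under the original hypothesis we may take $\clP = [\alpha]^\kappa$ over $\alpha<\mu$. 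Under the weaker hypothesis we would take $\clP$ witnessing $\bfU_\kappa(\mu)=\mu$ from Definition \ref{x7}(1): every $u \in [\mu]^\kappa$ is included in a union of fewer than $\kappa$ members of $\clP$, or in a member of $\clP$, according to the precise form of $\bfU$ there.

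The key steps for (1) are as follows. First, we replace every quantification over ${}^\kappa\alpha$ (with $\alpha<\mu$) by quantification over functions with domain in $\clP$ or range inside a member of $\clP$. This gives at most $\mu^{\kappa}$-many ``codes'' only if we are careless. Careful use of $|\clP|=\mu$ and $\theta=\theta^{2^\kappa}$ keeps the count at $\mu$, since each code is a member of $\clP$ together with a function into $\theta$ on a set of size $\le 2^\kappa$. Second, we re-run the diagonalisation that produces $\LL c_{\delta,\beta}\RR$: the guesses are indexed by these codes. The hypothesis $\mu>\theta$ replaces the role formerly played by $\theta\le\mu$ together with $|\theta|^\kappa\le\mu$. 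Third, we check that the stationarity argument, which is the club-guessing or pressing-down step choosing $C_\delta \subseteq C'_\delta$, never used $|\alpha|^\kappa\le\mu$ except through the covering.

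For (2), we would replace the sets $C^\bullet_\beta$, subject to $2^{|C^\bullet_\beta|}\le 2^\mu$, by the parameter $\bfp$ of \ref{a2}(1). The only use of (A)(d) should be that the number of possible restrictions $\eta\rest C^\bullet_\beta$ is at most $2^\mu$. In this way, each $F_\beta$ factors through an enumeration of size $2^\mu$, after which one codes $\eta$ by a single function $\lambda\to 2^\mu$. The framework $\bfp$ presumably axiomatises exactly this coding: a system of ``pieces'' with at most $2^\mu$ possible values each, closed under the operations used. So (2) amounts to checking that the proof of \ref{y2} only uses $C^\bullet_\beta$ via this coding, and then substituting the clauses of \ref{a2}(1).

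The main obstacle is the first step of (1). In the counting we must verify that covering by $\bfU_\kappa(\mu)$ suffices, that is, that knowing $\eta$ only on a set from $\clP$ covering $C_\delta$ (rather than on all $\kappa$-subsets) is enough to decide $F_\delta$. For a continuous colouring we expect this to work: the value is determined by the sequence of values on the initial segments $C_\delta\cap\beta$, each of size $<\kappa$, and each such segment lies inside some member of $\clP$. I would try first to prove exactly this reduction.
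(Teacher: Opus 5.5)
\textbf{There is a genuine gap: you locate the use of the hypothesis in the wrong place, and the step you plan to prove neither exists in the argument nor follows from $\bfU_\kappa(\mu)=\mu$.}

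Your overall plan, auditing the proof of Theorem \ref{y2} and substituting, is what the paper intends; its proof of \ref{y5} is just ``Similar''. Your instinct is also right that the old hypothesis supplies a size-$\mu$ family of ``bounded'' objects, which $\bfU_\kappa(\mu)=\mu$ should replace (note the bounded family only works when $\cf(\mu)\neq\kappa$, cf.\ \ref{x10}(2)). The audit itself, however, goes wrong in two ways.

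First, Definition \ref{x7}(1) gives no covering. It only says that every $v\in[\mu]^\kappa$ meets some $u\in\clU$ in $\kappa$ points. So the step you call the main obstacle (``$C_\delta\cap\beta$ lies inside a member of $\clP$, hence $\eta$ on that member decides $F_\delta$'') cannot be obtained from $\bfU_\kappa(\mu)=\mu$. It is also not a step of the proof: $C_\delta\subseteq\delta<\lambda=\cf(2^\mu)$ is not a subset of $\mu$, and the argument never recovers a colour from $\eta$ restricted to a member of a family in $[\mu]^{\le\kappa}$.

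Second, there is no counting of ``codes'' and no club-guessing or pressing-down choice of $C_\delta\subseteq C'_\delta$. The final $C_\delta$ is $C^A_\delta$ for the $A\in[\kappa]^\kappa$ supplied by Claim \ref{a28}. That is where $\theta=\theta^{2^\kappa}$ and the $(2^\kappa)^+$-completeness of the club filter are used.

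For (1), the proof of \ref{y2} invokes $(\forall\alpha<\mu)[|\alpha|^\kappa\le\mu]$ only to verify clause (C) of Conclusion \ref{x47}, namely $\bfU_{[\kappa]^\kappa}(\mu)=\mu$. This clause drives the passage $\Sep_1\Rightarrow\Sep_2$ of Claim \ref{x32}. The $\Sep_1$-witness gives, for each $i<\kappa$ separately, an index $\eps_i<\mu$ with $f^\circ_{\eps_i}(\nu_i)=\varrho_i(\eps_i)$. One then needs a family $\clF\subseteq{}^\kappa\mu$ of size $\mu$ some member of which agrees with $\LL\eps_i:i<\kappa\RR$ on $\kappa$ coordinates. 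That is exactly $\bfU'_\kappa(\mu)=\mu$, which follows from $\bfU_\kappa(\mu)=\mu$ by Observation \ref{x10}(1), since $\mu>\theta=\theta^{2^\kappa}\ge 2^\kappa$. So the repair is:
\begin{itemize}
\item use \ref{x32}(c) in place of \ref{x32}(a);
\item get $\Sep_3$ from \ref{x41}(2);
\item run Lemma \ref{a8} and Claim \ref{a28} unchanged.
\end{itemize}
The hypothesis $\mu>\theta$ is what \ref{x32} needs, and what the checks $\kappa<\chi$ and $\theta^{<\chi}\le 2^\mu$ in the proof of \ref{y2} need.

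For (2), the feature of the $C^\bullet_\beta$ that matters is not that there are at most $2^\mu$ restrictions $\eta\rest C^\bullet_\beta$. What matters is $|C_{\delta,i}|<\chi$ together with clauses \ref{a2}(1)(E) and \ref{a2}(2)(B): fewer than $\lambda$ sets and colourings sit at each $\beta$. This is what gives $\zeta_\beta<\lambda$ and hence the club $E_*$ in the proof of \ref{a8}. That club puts every sequence $\LL F_{\delta,i}(\eta_\eps\rest C_{\delta,i}):\eps<\mu\RR$ into $\cP_\delta$, where the pre-chosen $\bar\varrho^\delta$ applies. For a general $\bfp$ as in \ref{a2}(1), one simply applies \ref{a8} and \ref{a28} to it directly.
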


\mn
An additional result is as follows. (See \S3-4 for context --- specifically,  \ref{b2} and \ref{b5}.)

\begin{theorem}\label{y8}
Assume $\mu$ is strong limit singular, $\kappa \defeq \cf(\mu)$, 
$$
\kappa+\theta < \mu < \lambda = \cf(\lambda) < 2^\mu < 2^\lambda,
$$ 
and $S$ is a stationary subset of $S_\kappa^\lambda$.

\underline{Then} we can find $\olsi C = \LL C_\gamma^\delta : \delta \in S,\ \gamma < \lambda\RR$ such that:
\begin{enumerate}[$\bullet_1$]
    \item $C_\gamma^\delta \subseteq \delta = \sup(C_\gamma^\delta)$
\sn
    \item $\otp(C_\gamma^\delta) = \kappa$
\sn
    \item $\olsi C$ is a $\mu^+$-\emph{free sequence}.
        
    By this we mean: if $u \subseteq S \times \lambda$ is of cardinality 
    $< \mu^+$, \underline{then} there exists some sequence 
    $\LL \beta_\gamma^\delta : (\delta,\gamma) \in u\RR$ with
    $\beta_\gamma^\delta \in C_\gamma^\delta$ such that
$$
    \LL C_\gamma^\delta \setminus \beta_\gamma^\delta : (\delta,\gamma) \in u\RR
$$
    is a sequence of pairwise disjoint sets.
\sn
    \item  If $\bfF_\gamma^\delta : {}^{C_\gamma^\delta}(2^\mu) \to \theta$ 
    for $(\delta,\gamma) \in S \times \lambda$, \underline{then} we can find a 
    $$
    \bar c^\delta = \LL c_\gamma^\delta : \gamma < \lambda\RR 
    \in {}^\lambda\theta
    $$ 
    such that for any $\delta \in S$ and 
    $f : \delta \to 2^\mu$, for some (even `many') $\gamma < \lambda$, we have 
    $$
    \bfF_\gamma^\delta( f \rest C_\gamma^\delta) = c_\gamma^\delta.
    $$
\end{enumerate}
\end{theorem}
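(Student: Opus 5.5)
We will split the statement into two parts, as \S3--\S4 suggest: a freeness part (the first three bullets) and a guessing part (the last bullet). The two are then glued together by a bookkeeping of $S\times\lambda$ into $\lambda$ many ``slots'' below each $\delta$.

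\emph{Freeness.} First we invoke the \S4 result announced in the annotated content. For $\mu$ strong limit singular and $\lambda\in(\mu,2^\mu)$ there is a $\mu^+$-free $\Lambda\subseteq{}^\kappa\mu$ with $|\Lambda|=\lambda$. The natural route is via pcf: choose an increasing sequence $\LL\mu_i:i<\kappa\RR$ of regular cardinals cofinal in $\mu$ with $\lambda=\mathrm{tcf}(\prod\mu_i/J^{\bd}_\kappa)$. When $\lambda$ is not itself such a true cofinality, we use instead a $\lambda$-sized subfamily of a $<_{J^{\bd}_\kappa}$-increasing, $\mu^+$-free sequence of length $\lambda$, obtained from $2^\mu>\lambda$ and strong limitness. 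Freeness of sets of size $\le\mu$ then follows from the existence of enough good points, by the usual pcf argument (Sh:g Ch.~II). Next we transfer $\Lambda=\{\eta_\gamma:\gamma<\lambda\}$ to club-guessing ladders. Fix for each $\delta\in S$ a ladder $e_\delta=\{\alpha_{\delta,i}:i<\kappa\}$ cofinal in $\delta$, and set
$$
C^\delta_\gamma=\{\alpha_{\delta,i}\cdot\mu+\eta_\gamma(i)\text{-coded point}:i<\kappa\},
$$
that is, we code $(\delta,i,\eta_\gamma(i))$ injectively into ordinals in the interval $[\alpha_{\delta,i},\alpha_{\delta,i+1})$. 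This is possible after thinning $e_\delta$ so that consecutive gaps have size $\ge\mu$, which we may do since $\lambda>\mu$. Now points attached to different $\delta$'s meet only on an initial segment, because $e_\delta\cap e_{\delta'}$ is bounded in $\min(\delta,\delta')$. Hence $\mu^+$-freeness of $\olsi C$ reduces to the $\mu^+$-freeness of $\Lambda$ (for the same $\delta$) together with a bounded disjointness argument across different $\delta$'s. The only subtle point is uniformity when $u$ meets $\mu$ many $\delta$'s, and there we use that $|u|\le\mu$ and induct on $\sup$ of the $\delta$'s.

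\emph{Guessing.} Fix $\delta$. There are $|{}^{\delta}(2^\mu)|\le(2^\mu)^{|\delta|}$ functions $f$, and we must hit each one at some $\gamma$. This is a weak-diamond-type statement of the following form: given $\lambda$ colourings $\bfF_\gamma$ of ${}^{C_\gamma}(2^\mu)$, each depending on only $\kappa$ coordinates, find $c_\gamma$ such that no $f$ avoids all of them. We argue by counting, using $2^\mu<2^\lambda$. Suppose that for every $\bar c\in{}^\lambda\theta$ some $f_{\bar c}$ avoids $\bar c$. Since the $C_\gamma$ are almost disjoint (by freeness, any $\mu$ of them are disjoint above some point), we define $f$ piecewise on disjoint tails. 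For $\gamma$ in a free family we may choose $f\restriction(C_\gamma\setminus\beta_\gamma)$ independently, and each tail admits $\ge2$ values of $\bfF_\gamma$ unless $\bfF_\gamma$ is constant there (the constant case is trivial). This yields an injection ${}^{\lambda}2\to{}^{\delta}(2^\mu)/\!\sim$ of the kind used in \cite{Sh:898}, contradicting $2^\lambda>2^\mu=|{}^{\mu}(2^\mu)|$ restricted to the relevant $\le\mu$ coordinates. To make this precise we follow the quoted \cite{Sh:898} result, applied with the $\mu^+$-free $\olsi C^\delta$. ``Many'' $\gamma$ then follows by applying the argument to each of $\lambda$ disjoint blocks of the $\gamma$'s.

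The main obstacle is this guessing step. The naive count fails because $f$ ranges over $(2^\mu)^{|\delta|}$, which can exceed $2^\lambda$. The key is that freeness makes the dependence local: on each $\le\mu$ sized family the values $\bfF_\gamma(f\restriction C_\gamma)$ can be chosen independently, so a diagonalizing $\bar c$ exists, and $2^\lambda>2^\mu$ lets us choose $\bar c$ avoiding all the ``bad'' patterns. I would first try to prove it with $\theta=2$ via the uniformization/weak diamond dichotomy and then lift to $\theta$ by coding $\theta<\mu$ into binary blocks.
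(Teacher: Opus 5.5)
Your architecture matches the paper's, which is the proof of \ref{b8} with $\lambda_*\defeq\lambda$: take a $\mu^+$-free $\Lambda\subseteq{}^\kappa\mu$ of size $\lambda$ from \S4, place it on ladders, then apply Theorem \ref{b2} separately for each $\delta$. However, the freeness step fails as written, and the guessing step rests on the wrong mechanism.

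\emph{Freeness.} You reuse the same $\eta_\gamma$ for every $\delta$, and you separate different $\delta$'s only by the boundedness of $e_\delta\cap e_{\delta'}$. Pairwise bounded intersection does not give $\mu^+$-freeness, and no ``induction on the sup'' repairs it. Here is a counterexample. Let $\kappa<\sigma=\cf(\sigma)<\mu$, let $\tau_\xi\defeq\mu^2\cdot(1+\xi)$ for $\xi<\sigma$, let $S\defeq S^\lambda_\kappa$, and choose $e_{\tau_\zeta}\subseteq\{\tau_\xi:\xi<\zeta\}$ when $\cf(\zeta)=\kappa$ (all gaps are $\ge\mu$). Fix $\gamma$ and any choice of tails. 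Apply Fodor's lemma to the index of the ladder interval containing the first point of the tail of $C^{\tau_\zeta}_\gamma$. This gives stationarily many $\zeta$ whose tails begin with the same point, as long as the code of a point depends only on the local data $(\alpha_{\delta,i},\alpha_{\delta,i+1},i,\eta_\gamma(i))$. Nor can the coding be made injective in $\delta$: by Fodor there are no pairwise disjoint nonempty $A_\delta\subseteq\delta$ for $\delta\in S$.

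The paper avoids this by spending distinct members of $\Lambda$ on distinct pairs. Since $|S\times\lambda|=\lambda$, list $\Lambda$ without repetition as $\LL\rho^*_{\delta,\gamma}:(\delta,\gamma)\in S\times\lambda\RR$, normalized so that $\rho^*_{\delta,\gamma}(i)\equiv i\bmod\kappa$. Enumerate $C_\delta$ by ordinals $\beta^\delta_i$ divisible by $\mu$, and put $C^\delta_\gamma\defeq\{\beta^\delta_i+\rho^*_{\delta,\gamma}(i):i<\kappa\}$. A common point of $C^{\delta_1}_{\gamma_1}$ and $C^{\delta_2}_{\gamma_2}$ then forces $\rho^*_{\delta_1,\gamma_1}(i)=\rho^*_{\delta_2,\gamma_2}(i)$ for a single $i$. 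So freeness of $\Lambda$ alone gives freeness of $\olsi C$, whatever the ladders are.

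Two smaller points. First, gaps of size $\ge\mu$ depend on $\delta$, not on $\lambda>\mu$. For $\delta=\mu^2+\kappa$ they are impossible, and the freeness bullet itself fails for $u=\{\delta\}\times\mu$. So, as in \ref{b8}(1A), you must intersect $S$ with the club of multiples of $\mu^2$. Second, your pcf sketch for $\Lambda$ is the \cite{Sh:g} argument. It handles $\cf(\mu)>\aleph_0$ but not every $\mu$ of countable cofinality, which is exactly why \ref{d17}--\ref{d26} exist; you should just cite \ref{d26}.

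\emph{Guessing.} Freeness plays no role here, and your counting works only for $\theta=2$. For fixed $\delta$, every $C^\delta_\gamma$ lies in the range of the injection $h_\delta:\mu\to\delta$, $h_\delta(\eps)\defeq\beta^\delta_i+\eps$ for $\eps\equiv i\bmod\kappa$. So $f$ matters only through $f\circ h_\delta\in{}^\mu(2^\mu)$.
\begin{itemize}
\item If $\theta=2$, a sequence $\bar c$ avoided everywhere by $f$ is determined by $f\circ h_\delta$, so at most $2^\mu<2^\lambda$ sequences are avoided.
\item If $\theta>2$, each $f$ avoids $2^\lambda$ sequences and the count collapses. Coding colours in binary blocks does not help: guessing digits separately yields correct digits at possibly different $\gamma$'s, never a correct colour at a single $\gamma$.
\end{itemize}

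The missing ingredient is $\Sep_1$. By \ref{x20}$(e)$ (note $\beth_\omega(\theta)\le\mu$, since $\mu$ is strong limit and $\theta<\mu$) together with \ref{x41}(1B), you have $\Sep_1(\mu,\mu,\theta,\theta,\beth_\omega(\theta))$. Now let $\lambda_\bullet\defeq\min\{\partial:2^\partial>2^\mu\}$; then $\lambda_\bullet\le\lambda$ because $2^\lambda>2^\mu$. Apply Theorem \ref{b2} to $C'_{\delta,\gamma}\defeq h_\delta^{-1}[C^\delta_\gamma]$ and $F_{\delta,\gamma}(g)\defeq\bfF^\delta_\gamma(g\circ h_\delta^{-1})$ for $\gamma<\lambda_\bullet$. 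This yields $c^\delta_\gamma$ guessing every $f$ at unboundedly many $\gamma<\lambda_\bullet$; pad with $c^\delta_\gamma\defeq0$ on $[\lambda_\bullet,\lambda)$. So your closing appeal to \cite{Sh:898} is the right move. What has to be verified there is $\Sep_1$ and $\lambda_\bullet\le\lambda$, not freeness.
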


\bn
We may rephrase Theorem \ref{y2} as follows:
\begin{theorem}\label{y11}
Suppose the assumptions in \emph{\ref{y2}(A)} all hold. 
\underline{Then} for some $\olsi C = \LL C_\delta : \delta \in S\RR$ with $C_\delta \subseteq \delta = \sup(C_\delta)$ and $C_\delta \subseteq \cyan{C_\delta'}$,
we have 
$$
\BB_*(\lambda,\olsi C,\olsi C^\bullet,2^\mu,\theta,\kappa).
$$

By this we mean ($\kappa$ is regular, and) clauses $(A)(b)$-$(d)$ and $(B)$ of \emph{\ref{y2}} all hold. I.e.
\begin{enumerate}
    \item [$\boxplus$]
    \begin{enumerate}[$(a)$]
        \item $\kappa$ is regular.
\sn
        \item $\lambda \defeq \cf(2^\mu)$ and $S \subseteq S_\kappa^\lambda$ is stationary.
\sn
        \item For each $\delta \in S$, we have $C_\delta \subseteq \delta = \sup(C_\delta)$ with $\otp(C_\delta) = \kappa$.
\sn
        \item For all $\beta < \lambda$, we have $C_\beta^\bullet \subseteq \beta$ such that $2^{|C_\beta^\bullet|} \leq 2^\mu$.

        (Without loss of generality $\beta \in \lambda \setminus \bigcup\limits_\delta {C_\delta} \Rightarrow C_\beta^\bullet = \varnothing$.)
\sn
        \item {If} $F_\beta : {}^{C_\beta^\bullet}(2^\mu) \to \theta$ \underline{then} there exists $\LL c_{\delta,\beta} : \delta \in S,\ \beta \in C_\delta'\RR$ with $c_{\delta,\beta} < \theta$ such that for every $\eta \in {}^\lambda(2^\mu)$, for stationarily many 
        $\delta \in S$, we have
$$
        \beta \in C_\delta \Rightarrow F_\beta(\eta \rest C_\beta^\bullet) = c_{\delta,\beta}.
$$
    \end{enumerate}
\end{enumerate}
\end{theorem}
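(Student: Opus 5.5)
Theorem \ref{y11} is only a restatement of Theorem \ref{y2}, so the plan is to unfold the notation $\BB_*(\lambda,\olsi C,\olsi C^\bullet,2^\mu,\theta,\kappa)$ and check that each clause of $\boxplus$ is either an assumption in \ref{y2}(A) or part of the conclusion \ref{y2}(B). Assume \ref{y2}(A) holds. Theorem \ref{y2} then gives $\olsi C = \LL C_\delta : \delta \in S\RR$ with $C_\delta \subseteq C_\delta'$ and $\sup(C_\delta) = \delta$, satisfying the guessing property of \ref{y2}(B). Clause $\boxplus(a)$ ($\kappa$ regular) is part of \ref{y2}(A)(a). Clause $\boxplus(b)$ is \ref{y2}(A)(b). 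Clause $\boxplus(d)$ is \ref{y2}(A)(d). Clause $\boxplus(e)$ is \ref{y2}(B) word for word, with the same $c_{\delta,\beta}$ indexed by $\beta \in C_\delta'$.

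The only step needing an argument is $\boxplus(c)$: $C_\delta \subseteq \delta = \sup(C_\delta)$ with $\otp(C_\delta)=\kappa$. First, $C_\delta \subseteq C_\delta' \subseteq \delta$ and $\sup(C_\delta)=\delta$ come from \ref{y2}(B). For the order type, $C_\delta$ is a subset of $C_\delta'$, which has order type $\kappa$, so $\otp(C_\delta)\le\kappa$. Since $C_\delta$ is cofinal in $\delta$ and $\cf(\delta)=\kappa$ (as $S\subseteq S^\lambda_\kappa$), we have $\otp(C_\delta)\ge\cf(\otp(C_\delta))=\kappa$. Hence $\otp(C_\delta)=\kappa$.

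The parenthetical normalization in $\boxplus(d)$ says we may assume $C^\bullet_\beta=\varnothing$ for $\beta\notin\bigcup_\delta C_\delta$. Replacing such $C^\bullet_\beta$ by $\varnothing$ keeps $2^{|C^\bullet_\beta|}\le 2^\mu$, and it does not affect $\boxplus(e)$, since that condition only mentions $F_\beta$ for $\beta\in C_\delta$. I expect no real obstacle: all the content lies in Theorem \ref{y2}, whose proof is given later on page \pageref{a43}. The only thing to watch is the order-type computation above, which uses that a cofinal subset of an ordinal of cofinality $\kappa$ has order type at least $\kappa$.
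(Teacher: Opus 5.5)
Your proposal is correct and takes the same approach as the paper, which presents Theorem~\ref{y11} only as a rephrasing of Theorem~\ref{y2} (proved on page~\pageref{a43}), with no separate argument. Your explicit check that $\otp(C_\delta)=\kappa$ is a detail the paper leaves implicit, and it is right: $C_\delta$ is a cofinal subset of $C_\delta'$, which has order type $\kappa$, and $\cf(\delta)=\kappa$.
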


\bn
We also prove an analogous result for Double Black Boxes.
\begin{theorem}\label{y14}
We have $`(A) \wedge (B) \Rightarrow (C)$,\emph{'} where
\begin{enumerate}[$(A)$]
    \item 
    \begin{enumerate}[$(a)$]
        \item $\theta + \kappa < \mu < \min\{\lambda,\lambda_*\} \leq \lambda_* + \lambda \leq 2^\mu$
\sn
        \item $\kappa \in \Reg \cap \lambda$
\sn
        \item $S \subseteq S_\kappa^\lambda$ is stationary, and $\delta \in S \Rightarrow \mu^2 \divides \delta$.
\sn
        \item $\olsi C = \LL C_\delta : \delta \in S\RR$ with 
        $C_\delta \subseteq \delta = \sup(C_\delta)$, 
        $\otp(C_\delta) = \kappa$, and
        $$
        \alpha \in C_\delta \Rightarrow \mu \divides \alpha.
        $$
        \item $D_\delta$ is a filter on $\lambda_*$. (The default is the club filter
        if $\lambda_*$ is regular, $\{A \subseteq \lambda_* : |\lambda_* \setminus A| < \lambda_*\}$ if it is singular, and $\{\lambda_*\}$ if it is finite.)
    \end{enumerate}
\sn
    \item 
    \begin{enumerate}[$(a)$]
        \item $\mu$ is strong limit singular, and $\mu_* \leq \mu^+$.
\sn
        \item $\lambda_* \defeq \min\{\partial : 2^\partial > 2^\mu\} \leq \cf(\lambda) \leq 2^\mu$
\sn
        \item $\lambda < 2^\mu \wedge \mu_* \defeq \mu^+$, \underline{or} there is a $\mu_*$-free subset of ${}^\kappa\!\mu$ of cardinality $\lambda$.
    \end{enumerate}
\sn
    \item We can find $\olsi C^\bullet$ such that 
    $$
    \mathrm{DBB}_*(\lambda,\olsi C,\olsi C^\bullet,2^\mu,\theta,\kappa)
    $$
    holds. This means
    \begin{enumerate}[$(a)$]
        \item Clauses $(A)(a)$-$(d)$ are satisfied.
\sn
        \item $\olsi C^\bullet = \LL C_\gamma^\delta : \delta \in S,\ \gamma < \lambda_*\RR$, where $C_\gamma^\delta \subseteq \bigcup\limits_{\beta \in C_\delta} [\beta,\beta+\mu)$ and 
        $$
        \beta \in C_\delta \Rightarrow \big| C_\gamma^\delta \cap [\beta,\beta+\mu) \big| = 1.
        $$
        \item $\olsi C^\bullet$ is $\mu_*$-\emph{free} (as defined in \emph{\ref{y8}$\bullet_3$}).\footnote{
            I.e.\ for all $u \subseteq S \times \lambda_*$ of cardinality $< \mu_*$ there exist $\LL \beta_\gamma^\delta : (\delta,\gamma) \in u\RR$ such that 
$$
            \big\LL C_\gamma^\delta \setminus \beta_\gamma^\delta : (\delta,\gamma) \in u \big\RR
$$
            is a sequence of pairwise disjoint sets.
        }
\sn
        \item If $F_\gamma^\delta : {}^{ C_\gamma^\delta}(2^\mu) \to \theta$ (for $(\delta,\gamma) \in S \times \lambda_*$) \underline{then} for some\\ $\LL c_\gamma^\delta : \gamma < \lambda_*,\ \delta \in S\RR$ with $c_\gamma^\delta < \theta$, for every $f : \lambda \to 2^\mu$ and stationarily many $\delta \in S$, some\footnote{
            As in \ref{y8}, it may be possible to strengthen this to 
            `many $\gamma < \lambda$' for some definition of \emph{many}. 
        } 
        $\gamma < \lambda_*$, we have
        $$
        F_\gamma^\delta(f \rest C_\gamma^\delta) = c_\gamma^\delta.
        $$
    \end{enumerate}
\end{enumerate}
\end{theorem}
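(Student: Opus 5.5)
We build $\olsi C^\bullet$ in two stages: first we get freeness from a free family of $\kappa$-sequences, then we get the black-box clause (C)(d) by a diagonalization over $2^\mu$, using the gap $2^{<\lambda_*} = 2^\mu < 2^{\lambda_*}$.

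\textbf{Freeness (clauses (C)(b) and (C)(c)).} By (B)(c), we have either a $\mu_*$-free set $\Lambda \subseteq {}^\kappa\mu$ of cardinality $\lambda$, or, when $\lambda < 2^\mu$, we quote the result of \S4 (in the spirit of $\boxplus_1$) to get a $\mu^+$-free such $\Lambda$. Since $\lambda_* \leq \cf(\lambda) \leq \lambda$, we can fix for each $\delta \in S$ an injective assignment $\gamma \mapsto \eta^\delta_\gamma \in \Lambda$, with different $\delta$ receiving disjoint subsets of $\Lambda$. Let $\LL \beta_{\delta,i} : i < \kappa\RR$ enumerate $C_\delta$ increasingly, and put
$$
C^\delta_\gamma \defeq \{\beta_{\delta,i} + \eta^\delta_\gamma(i) : i < \kappa\}.
$$
Because $\mu \divides \beta_{\delta,i}$, this meets each $[\beta,\beta+\mu)$ with $\beta \in C_\delta$ in exactly one point, which gives (C)(b).

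For (C)(c), take $u$ with $|u| < \mu_*$. Different $\delta$'s interact only through the ordinals of $C_\delta \cap C_{\delta'}$, and $\mu^2 \divides \delta$ controls this. The plan is to dispose of the $\delta$'s first: order the $\delta$'s appearing in $u$, and for each one cut away an initial segment so that the tail $C_\delta \setminus \beta$ avoids $\sup(C_{\delta'} \cap \delta)$ for the finitely-or-boundedly many relevant $\delta' < \delta$. Then we use freeness of $\Lambda$ inside each fixed $\delta$, and freeness across the disjoint pieces of $\Lambda$ to handle blocks that share $\beta$'s. The honest difficulty is that there may be up to $\mu$ many $\delta$'s with $\cf = \kappa$ whose $C_\delta$ overlap. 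I would resolve this by using the $\Lambda$-coordinate: two sets $C^\delta_\gamma$ and $C^{\delta'}_{\gamma'}$ meet at $\beta + \eta(i) = \beta + \eta'(i')$ only if $\eta(i) = \eta'(i')$. So coding the pair $(\delta, i)$ into the values of $\eta$ by a pairing, and using that $\Lambda$ is free as a set of sets, reduces (C)(c) to the freeness of $\Lambda$ alone.

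\textbf{The black box (C)(d), and the main obstacle.} Given the colorings $F^\delta_\gamma$, we want $c^\delta_\gamma$ defeating every $f : \lambda \to 2^\mu$ on a stationary set of $\delta$. I would argue by contradiction, in the style of the weak diamond. Fix $\delta$ and $g \defeq f\rest\bigcup_{\beta \in C_\delta}[\beta,\beta+\mu)$. For each such $g$, define $h_g(\gamma) \defeq F^\delta_\gamma(g \rest C^\delta_\gamma)$, so $h_g \in {}^{\lambda_*}\theta$. We need one $\bar c^\delta$ that agrees with every $h_g$ somewhere, i.e.\ a $\bar c^\delta$ not avoided by any member of a family $\clH_\delta$ of size $\leq (2^\mu)^\kappa = 2^\mu$.

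The key counting step is this. If every $\bar c \in {}^{\lambda_*}\theta$ were avoided (disagreeing everywhere) by some $h \in \clH_\delta$, then, splitting $\lambda_*$ into $\lambda_*$ blocks of size $\theta$, we obtain an injection from ${}^{\lambda_*}2$ into a set of size $2^\mu$. This contradicts $2^{\lambda_*} > 2^\mu$. More precisely, we choose $\bar c$ to code a sequence in ${}^{\lambda_*}2$ outside the $\leq 2^\mu$ many sequences ``used'' by $\clH_\delta$. This is where $\lambda_* = \min\{\partial : 2^\partial > 2^\mu\}$ and the filter $D_\delta$ (to make ``some $\gamma$'' robust) enter.

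The remaining task is to make the choice stationarily correct for all $f$ simultaneously, even though $\clH_\delta$ depends on $f$ only through $g$, which is uniform in $\delta$. This part is automatic: the argument works for every $\delta \in S$, not merely stationarily many. The main obstacle is therefore the freeness verification across different $\delta$'s in (C)(c) when $\mu_* = \mu^+$, where the Shelah \S4 free-set theorem has to be invoked.
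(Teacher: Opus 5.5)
\textbf{What is fine.} Your construction of $\olsi C^\bullet$ and the freeness check are essentially those in the proof of Claim \ref{b8}. Distinct pairs $(\delta,\gamma)$ receive distinct members of the free family, and $\mu \mid \beta_{\delta,i}$ reduces a collision $\beta_{\delta,i}+\eta(i)=\beta_{\delta',i'}+\eta'(i')$ to $\eta(i)=\eta'(i')$. Making the values also code $i$ (the paper takes $\rho(i)\equiv i \bmod \kappa$) then lets freeness finish. You do not need to code $\delta$, and the detour through $\sup(C_{\delta'}\cap\delta)$ can be dropped, so freeness is not the main obstacle. Your remark that the guessing holds for every $\delta\in S$, so that stationarity is automatic, is also how the paper proceeds.

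\textbf{The gap: the counting step for (C)(d) only works for $\theta=2$.} For $\theta\ge 3$, a single $h$ avoids $(\theta-1)^{\lambda_*}=2^{\lambda_*}$ candidates. Any two candidates $\bar c\ne\bar c'$ also have a common avoider: choose $h(\gamma)\notin\{c_\gamma,c'_\gamma\}$. Nothing prevents $\clH_\delta$ from containing such common avoiders. So no blockwise coding of $x\in{}^{\lambda_*}2$ into $\bar c_x$ makes $x\mapsto$ ``an avoider of $\bar c_x$'' injective, and cardinality alone gives no contradiction. Since $\theta$ is arbitrary below $\mu$ in the theorem, this is the heart of the matter. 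The paper handles it with Theorem \ref{b2} (quoted from \cite{Sh:898}), transported to each $\delta$ via the injection $h_\delta:\mu\to\delta$. That theorem needs $\Sep_1(\mu,\mu,\theta,\theta,\Upsilon)$ for some $\Upsilon<2^\mu$, which holds here by Claim \ref{x20} since $\beth_\omega(\theta)\le\mu$.

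\textbf{How to close it.}
\begin{enumerate}[(1)]
\item Let $\LL f_\eps:\eps<\mu\RR$ witness $\Sep_1(\mu,\mu,\theta,\theta,\Upsilon)$.
\item For $x\in{}^{\lambda_*}2$, let $\nu^x_\gamma\in{}^\mu\theta$ code $x\restriction\gamma$ injectively. This is possible because $2^{|\gamma|}\le 2^\mu$ for $\gamma<\lambda_*$.
\item Test the candidates $c^{x,\eps}_\gamma\defeq f_\eps(\nu^x_\gamma)$ for all $x$ and all $\eps<\mu$.
\item Suppose each $\bar c^{x,\eps}$ were avoided by some $h_{g^x_\eps}\in\clH_\delta$. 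Put $\varrho^x_\gamma(\eps)\defeq h_{g^x_\eps}(\gamma)$; then $\nu^x_\gamma\in\Sol_{\varrho^x_\gamma}$ for every $\gamma<\lambda_*$.
\item There are only $(2^\mu)^\mu=2^\mu$ possible sequences $\LL g^x_\eps:\eps<\mu\RR$. All $x$ sharing the same sequence have fewer than $\Upsilon$ distinct restrictions at each level $\gamma<\lambda_*$.
\item Since $\Upsilon\le\mu<\lambda_*=\cf(\lambda_*)$, there are fewer than $\Upsilon$ such $x$. This gives $2^{\lambda_*}\le 2^\mu$, a contradiction.
\end{enumerate}
So some candidate meets every $h\in\clH_\delta$. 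What your block coding is missing is this combination of $\Sep$ with tree coding through $2^{<\lambda_*}=2^\mu$.

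A minor slip: $|\clH_\delta|\le(2^\mu)^\mu$, not $(2^\mu)^\kappa$, since the domain of $g$ has size $\mu$. This is harmless.
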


\bn
\begin{claim}\label{y17}
$1)$ If \emph{\ref{y14}(A)} holds and $\lambda_* \defeq 1$, then \emph{DBB}$_*$ is equivalent to $\BB_*$.

\mn
$2)$ In \emph{\ref{y14}} (as in \emph{\ref{y8}}) we may change clause $(C)(d)$ to use $\bfF_\gamma^\delta : {}^{C_\delta^\bullet}(2^\mu) \to \theta$, where $C_\delta^\bullet \defeq \bigcup\limits_{\beta \in C_\delta} [\beta,\beta+\mu)$.
\end{claim}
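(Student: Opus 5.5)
For part (1) I unwind both definitions with $\lambda_* = 1$. The only index $\gamma$ is $0$, so $\olsi C^\bullet$ reduces to $\LL C_0^\delta : \delta \in S\RR$. Clause (C)(b) forces $C_0^\delta$ to pick exactly one point in each interval $[\beta,\beta+\mu)$ for $\beta \in C_\delta$, so $C_0^\delta$ is a ``shifted copy'' of $C_\delta$. It has order type $\kappa$ and is cofinal in $\delta$, since $\mu^2 \divides \delta$ and $\mu \divides \beta$ give $\beta+\mu \le \delta$.

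With a single $\gamma$, clause (C)(d) says the following: for every $\LL F_0^\delta : \delta \in S\RR$ there are colours $c_0^\delta$ such that every $f : \lambda \to 2^\mu$ satisfies $F_0^\delta(f \rest C_0^\delta) = c_0^\delta$ for stationarily many $\delta$. This is exactly the one-colouring-per-$\delta$ black box on the ladder system $\LL C_0^\delta : \delta \in S \RR$.

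The plan is to match this with $\BB_*$ in the form of \ref{y11}. To go from DBB$_*$ to $\BB_*$, I take $C_\beta^\bullet$ and the $F_\beta$ and define $F_0^\delta(f \rest C_0^\delta)$ to code the whole sequence $\LL F_\beta(\cdot) : \beta \in C_\delta\RR$. For this I would use that $\theta^\kappa$ colours can be pushed into $\theta$, or, where that fails, the ``continuous'' reading in which $\beta \mapsto c_{\delta,\beta}$ is recovered coordinatewise.

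For the converse I set $C_\beta^\bullet$ to be the relevant part of $C_0^\delta$ and use the $\BB_*$ colours.

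The freeness clause (C)(c) needs a separate check. For $\lambda_* = 1$ and $u \subseteq S \times 1$ of size $<\mu_*$, we need end segments of the ladders $C_0^\delta$ that are pairwise disjoint. The main obstacle is matching the exact index sets, namely $C_\delta$ versus $C_0^\delta$ and per-$\beta$ colours versus a single colour. Where the definitions don't line up literally, I read ``equivalent'' as ``up to the natural translation of $\olsi C$''.

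For part (2) I replace $C_\gamma^\delta$ by the larger domain $C_\delta^\bullet = \bigcup_{\beta \in C_\delta}[\beta,\beta+\mu)$ in the proof of \ref{y14}. The only place the domain is used is the counting of possible restrictions $f \rest C_\delta^\bullet$, and this does not change since $|C_\delta^\bullet| = \mu\cdot\kappa = \mu$ and $(2^\mu)^\mu = 2^\mu$. So the proof goes through verbatim, exactly as \ref{y8} is stated with full domains.
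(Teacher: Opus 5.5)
\textbf{Part (2)} is essentially right. It rests on the same fact the paper cites in Remark~\ref{b9}(1), namely $|{}^\mu(2^\mu)|=2^\mu$.

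However, the proof of \ref{b8} does not go through verbatim. Its map $h_\delta$ only reaches the points $\beta_{\delta,i}+\eps$ with $\eps\equiv i \pmod\kappa$, so its range does not cover $C^\bullet_\delta$. You must replace it by a bijection $\mu\to C^\bullet_\delta$ and apply \ref{b2} with every $C_\gamma=\mu$; freeness is untouched because $\olsi C^\bullet$ does not change. (Also, \ref{y8} is stated with the small domains $C^\delta_\gamma$, not full ones.)

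The paper's route avoids reopening the proof. Fix a bijection ${}^\mu(2^\mu)\to 2^\mu$. Given $f$, let $f^+(\alpha)$ be the code of $f\rest[\beta,\beta+\mu)$, where $\beta$ is the multiple of $\mu$ with $\alpha\in[\beta,\beta+\mu)$. Each $C^\delta_\gamma$ meets every block $[\beta,\beta+\mu)$ with $\beta\in C_\delta$, so $f^+\rest C^\delta_\gamma$ determines $f\rest C^\bullet_\delta$. The strong form of (C)(d) therefore follows from the original one applied to the transferred colourings, and this works for any $\olsi C^\bullet$ satisfying (C)(b).

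\textbf{Part (1)} has genuine gaps, and as written neither direction is proved. With $\lambda_*=1$ the hypotheses of \ref{y14} cannot literally hold: (A)(a) and (B)(b) both force $\lambda_*>\mu$. The paper gives no argument for (1), so you must fix a precise statement. ``Equivalent up to the natural translation'' is not one.

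For $\mathrm{DBB}_*\Rightarrow\BB_*$, the step ``let $F^\delta_0(f\rest C^\delta_0)$ code $\LL F_\beta(\cdot):\beta\in C_\delta\RR$'' fails. $F_\beta$ reads $f\rest C^\bullet_\beta$, an arbitrary subset of $\beta$ unrelated to $C^\delta_0$, and $f\rest C^\delta_0$ does not determine it. The missing idea is the recoding above, now storing at every point of $[\beta,\beta+\mu)$ a code of $f\rest C^\bullet_\beta$; this is possible since $2^{|C^\bullet_\beta|}\le 2^\mu$. Even then you must pack $\kappa$ colours into one. That needs $\theta^\kappa=\theta$ (or $\mathrm{DBB}_*$ with $\theta^\kappa$ colours), which is not a hypothesis of \ref{y14}. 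Your fallback of recovering $\beta\mapsto c_{\delta,\beta}$ ``coordinatewise'' is not an argument, because a single guess $c^\delta_0\in\theta$ cannot carry $\kappa$ independent colours.

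For $\BB_*\Rightarrow\mathrm{DBB}_*$ the plan does not work.
\begin{itemize}
\item In $\BB_*$ there is one set $C^\bullet_\beta$ per $\beta$, so it cannot be ``the relevant part of $C^\delta_0$'', which depends on $\delta$.
\item More seriously, $\BB_*$ only predicts continuous colourings: for each $\delta$ it controls the $\kappa$ values $F_\beta(\eta\rest C^\bullet_\beta)\in\theta$ of bounded pieces. $\mathrm{DBB}_*$ at $\lambda_*=1$ asks you to predict an arbitrary $F^\delta_0$ on all of ${}^{C^\delta_0}(2^\mu)$, i.e.\ the full $\BB^0$ of \ref{x5}(4). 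You give no reason why such an $F^\delta_0$ should factor through continuous data with values in $\theta<2^\mu$. This is exactly the continuous-versus-general distinction the paper draws between \S2 and \ref{x26}.
\item $\mathrm{DBB}_*$ also demands that $\LL C^\delta_0:\delta\in S\RR$ be $\mu_*$-free, which $\BB_*$ does not supply. It has to be built from the free family of \ref{y14}(B)(c), as in the proof of \ref{b8}; you only flag it.
\end{itemize}

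What can be salvaged is the single implication $\mathrm{DBB}_*\Rightarrow\BB_*$ for the ladders $C_\delta$. This requires $\lambda=\cf(2^\mu)$, as $\BB_*$ stipulates, and goes through via the recoding together with $\theta^\kappa=\theta$.
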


\bn
\begin{definition}\label{y20}  
1) For a regular uncountable cardinal $\lambda$, let 
$$
\check I[\lambda] \defeq \big\{S \subseteq \lambda : 
\text{some pair $(E,\bar a)$  satisfies part (2) below}\big\}.
$$ 

\sn
2) We say that $(E,\bar a)$ is a witness for $S \in \check I[\lambda]$
\underline{when} ($ S \subseteq \lambda $ and): 
\begin{enumerate}
    \item  $E$ is a club of the regular cardinal $\lambda$.
\sn
    \item  $\bar a = \LL a_\alpha : \alpha < \lambda \RR$, $a_\alpha \subseteq \alpha$, and $\beta \in a_\alpha \Rightarrow a_\beta = \beta \cap a_\alpha$.
\sn
    \item  For every $\delta \in E \cap S$, $a_\delta$ is an unbounded subset 
    of $\delta$ of order-type $< \delta$.
\end{enumerate}

\sn
3) For $\kappa < \lambda$, we define $\check I_\kappa[\lambda] \defeq \big\{S \subseteq \lambda : S \cap S_{\cf(\kappa)}^\lambda \in \check I[\lambda] \big\}$.
\end{definition}

\bn
By \cite{Sh:108}, \cite{Sh:420} and \cite{Sh:E12}:
\begin{claim}\label{y23}  
Let $\lambda$ be regular uncountable. 

\sn
$1)$  We have 
$S \in \check I[\lambda]$ 
\underline{iff}  
we can find a witness $(E,\bar a)$ for it which satisfies:
\begin{enumerate}
    \item[{$(a)$}]  $\delta \in S \cap E \Rightarrow \otp(a_\delta) = \cf(\delta)$
\sn
    \item[{$(b)$}]  If $\alpha \notin S$ then $\otp(a_\alpha) < \cf(\delta)$ for some $\delta \in S \cap E$.
\end{enumerate}

\sn
$2)$ $S \in \check I[\lambda]$ \underline{iff} there is a pair $(E,\olsi \cP)$ which is a \emph{weak witness} for it.
By this we mean:
\begin{enumerate}
    \item[{$(a)$}]  $E$ is a club of the regular uncountable $\lambda$.
\sn
    \item[{$(b)$}]  $\olsi \cP = \LL \cP_\alpha : \alpha < \lambda\RR$, 
    where $\cP_\alpha \subseteq \clP(\alpha)$ has cardinality
    $< \lambda$.
\sn
    \item[{$(c)$}]  If $\alpha < \beta < \lambda$ and 
    $\alpha \in u \in \cP_\beta$ \underline{then} $u \cap \alpha \in \cP_\alpha$.
\sn
    \item[{$(d)$}]  If $\delta \in E \cap S$ \underline{then} some 
    $u \in \cP_\delta$ 
    is an unbounded subset of $\delta$ of order type $\le \cf(\delta)$.
    (We may restrict ourselves to the case where $\delta$ is a limit ordinal.)
\end{enumerate}

\sn
$3)$ Suppose $(S,E,\olsi\cP)$ are as in part $(2)$ and $C$ is another club of $\lambda$. \underline{Then} the triple $(S_*,E_*,\olsi\cP_{\!*})$, defined below, satisfies part $(2)$ as well.
\begin{itemize}
    \item $S_* \defeq S \cap C$
\sn
    \item $E_* \defeq \{\delta \in S : \delta = \sup(E \cap C \cap \delta)\}$
\sn
    \item $\olsi\cP_{\!*} = \LL \cP_{\!\alpha}^* : \alpha < \lambda\RR$, where $\cP_{\!\alpha}^* \defeq 
    {\{}\{\sup(E \cap C \cap \beta) : 
    {\beta \in C \}:
    C \in \cP_{\!\alpha}}\}$
\end{itemize}

\sn
$4)$ If $\lambda$ is regular then $S_{<\lambda}^{\lambda^+} \in \check I[\lambda]$.
\end{claim}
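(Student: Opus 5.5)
The plan is to treat each part of Claim \ref{y23} separately, reducing everything to the definition of a witness in Definition \ref{y20}(2); these are standard facts from \cite{Sh:108}, \cite{Sh:420}, \cite{Sh:E12}. For part (1) the direction ``$\Leftarrow$'' is immediate, since a witness with (a) is in particular a witness. For ``$\Rightarrow$'', I start from a witness $(E,\bar a)$ and shrink the sets $a_\alpha$ so that order types become as small as possible while coherence survives. For $\delta \in S \cap E$ there is a club of $\delta$ of order type $\cf(\delta)$ inside the closure of $a_\delta$. I would first close the $a_\alpha$ under limits (coherence is preserved because initial segments of closures are closures of initial segments). Then I define $a'_\alpha$ by a canonical recursion on $\otp(a_\alpha)$, in the spirit of building ``$\otp$-minimal'' cofinal subsets: thin $a_\alpha$ using the Cantor normal form of its order type, so that the result depends only on the order type and on the coherent family. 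This is what guarantees $\beta \in a'_\alpha \Rightarrow a'_\beta = a'_\alpha\cap\beta$.

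Clause (b) of part (1) is obtained by simply setting $a_\alpha = \varnothing$, or truncating, for $\alpha \notin S$ whenever this does not violate coherence. Since $\lambda$ is regular uncountable and $S \cap E \neq \varnothing$ is assumed, for $\alpha \notin S$ we may replace $a_\alpha$ by a short set. Coherence is the delicate point here, and I would handle it by restricting to the $\alpha$'s that are not initial-segment points of some $a_\delta$ with $\delta \in S\cap E$.

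Part (2) is easier. Given a witness $(E,\bar a)$, I put $\cP_\alpha \defeq \{a_\beta \cap \alpha : \beta \le \alpha\}$ restricted appropriately, or more simply $\cP_\alpha \defeq \{a_\alpha\}$, and part (1)(a) gives (d). Conversely, given a weak witness $(E,\olsi\cP)$, I enumerate $\bigcup_{\beta<\alpha}\cP_\beta$ and build $a_\alpha$ by the usual pressing-down/coding trick from \cite{Sh:420}. Since $|\cP_\alpha|<\lambda$, I first fix bijections $\cP_\alpha \cong$ ordinals $<\lambda$, and I choose a club $E'\subseteq E$ of $\delta$'s closed under these codings. For $\delta \in E'\cap S$ with $u\in\cP_\delta$ of order type $\le\cf(\delta)$, I define $a_\delta$ as the set of codes of the initial segments $u\cap\beta$ for $\beta \in u$. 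Coherence then follows from clause (c), and the order type is $<\delta$ because $\cf(\delta)<\delta$ for $\delta\in E'$ chosen to be limits of cardinals, or by intersecting with such a club.

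Part (3) is a direct verification: the map $\beta\mapsto\sup(E\cap C\cap\beta)$ is monotone, it maps unbounded sets to unbounded sets at points of $E_*$, it does not increase order type, and it commutes with taking initial segments, so (c) and (d) transfer. Part (4) is the classical construction: for each $\alpha<\lambda^+$ fix $\alpha=\bigcup_{i<\lambda}b_{\alpha,i}$ with $|b_{\alpha,i}|<\lambda$ increasing, and let $\cP_\alpha$ be the set of all $b_{\alpha,i}$-generated sets closed under the pointwise construction of \cite{Sh:108}. Then $|\cP_\alpha|\le\lambda<\lambda^+$, and every $\delta$ of cofinality $<\lambda$ gets a cofinal subset of order type $\cf(\delta)$ in $\cP_\delta$, by choosing $i$ large enough to contain a cofinal sequence of length $\cf(\delta)$, which is possible since $\cf(\delta)<\lambda=\cf(\lambda)$. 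Finally part (2) converts this into membership in $\check I[\lambda^+]$. I expect the main obstacle to be the converse direction of (2), namely choosing codes so that coherence and the order-type bound hold simultaneously.
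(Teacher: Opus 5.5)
\textbf{Verdict: genuine gaps.} The paper gives no proof of this claim; it only cites the literature. So your sketch has to stand on its own, and at the decisive point it does not. Several parts are fine: (1) ``$\Leftarrow$'', the pruning that yields (1)(b) once (1)(a) is available, and (2) ``$\Rightarrow$'' via $\cP_\alpha=\{a_\alpha\}$. Part (1) ``$\Rightarrow$'', however, fails in both of its steps.

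\textbf{Closing under limits.} This does not preserve coherence. If $\beta\notin a_\alpha$ is a limit point of $a_\alpha$, the old $a_\beta$ is unrelated to $a_\alpha\cap\beta$. Worse, two sets in a coherent sequence can accumulate at the same $\beta$ with different traces, e.g.\ $a_{\alpha_1}\cap\omega^2=\{\omega n+1:n<\omega\}$ and $a_{\alpha_2}\cap\omega^2=\{\omega n+2:n<\omega\}$. Then no single redefinition of $a_{\omega^2}$ works.

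\textbf{Thinning by order type.} Consider $a'_\alpha=\{\beta\in a_\alpha:\otp(a_\alpha\cap\beta)\in e(\otp(a_\alpha))\}$. This is coherent only if $\gamma'\in e(\gamma)\Rightarrow e(\gamma')=e(\gamma)\cap\gamma'$ on the order types that occur, with $e(\gamma)$ cofinal of type $\cf(\gamma)$. That is the original problem moved to the order types. Cantor-normal-form fundamental sequences are not coherent: from $\omega\cdot 2\in e(\omega^2)=\{\omega n:n<\omega\}$ one would get $e(\omega\cdot 2)=\{0,\omega\}$, which is not cofinal.

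\textbf{The missing idea.} It is the coding you yourself invoke in the converse of (2): the new set at $\delta$ should consist of \emph{codes}, not of elements of $a_\delta$. Let $\gamma_\delta\defeq\otp(a_\delta)<\delta$, and fix arbitrary cofinal $e_\gamma\subseteq\gamma$ of type $\cf(\gamma)$, with no coherence required. Fix a pairing $\pr$ with successor-ordinal values, increasing in its first coordinate, and with $\pr(\beta,\gamma)\ge\beta$. Put $a'_{\pr(\beta,\gamma)}\defeq\{\pr(\beta',\gamma):\beta'\in a_\beta,\ \otp(a_\beta\cap\beta')\in e_\gamma\}$. For $\delta\in S\cap E$ closed under $\pr$, put $a'_\delta\defeq\{\pr(\beta,\gamma_\delta):\beta\in a_\delta,\ \otp(a_\delta\cap\beta)\in e_{\gamma_\delta}\}$, and put $a'_\alpha=\varnothing$ otherwise. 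Coherence is inherited from $\bar a$ because the index $\gamma$ is carried inside every code, and $\otp(a'_\delta)=\cf(\delta)$.

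\textbf{Part (2), converse.} Clause (d) only gives $\otp(u)\le\cf(\delta)$, which is not $<\delta$ for regular $\delta$. Restricting to ``limits of cardinals'' does not help. For $\lambda=\kappa^+$ these are bounded in $\lambda$; limit cardinals may be regular; and for weakly Mahlo $\lambda$ no club avoids the regulars. In that case $\cP_\alpha\defeq\{\alpha\}$ is a weak witness for the stationary set of regular cardinals below $\lambda$, which is not in $\check I[\lambda]$. So (2) needs the $\delta\in S$ to be singular, and you must assume this rather than derive it. With that assumption, your coding works, provided the codes are increasing in $\beta$ and $a$ is defined at every code.

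\textbf{Part (3).} The claimed transfer of (c) is false. Let $g(\beta)\defeq\sup(E\cap C\cap\beta)$, and take $\xi_0\in u\in\cP_\beta$ with $\alpha\defeq g(\xi_0)<\xi_0$; this happens for any $\xi_0$ strictly above a point of $E\cap C$ and at most the next one. Clause (c) for $\olsi\cP_{\!*}$ demands $g[u]\cap\alpha\in\{g[v]:v\in\cP_\alpha\}$. Clause (c) for $\olsi\cP$ only gives $u\cap\xi_0\in\cP_{\xi_0}$, and nothing links $\cP_{\xi_0}$ to $\cP_\alpha$. For instance, take $\alpha\notin S$, $\cP_\alpha=\{\varnothing\}$ and $u\cap\alpha\neq\varnothing$. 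One must re-index, e.g.\ $\cP^*_\alpha\defeq\{g[v]\cap\alpha:v\in\cP_\xi,\ g(\xi)=\alpha\}$, which still has size $<\lambda$. One must also take $E_*$ to be the club of limit points of $E\cap C$; as written, $E_*\subseteq S$ need not be a club.

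\textbf{Part (4).} The ``$b_{\alpha,i}$-generated sets'' are never defined. Your argument for (d) only shows that a cofinal set of type $\cf(\delta)$ is \emph{contained in} some $b_{\delta,i}$. It does not show that this set, together with all its initial segments as (c) requires, belongs to the $\cP$'s. Putting all subsets of the $b_{\alpha,i}$ into $\cP_\alpha$ breaks $|\cP_\alpha|\le\lambda$ as soon as $2^{<\lambda}>\lambda$. Producing $\le\lambda$ coherent sets per $\alpha$ that still catch every $\delta$ of cofinality $<\lambda$ is the actual content of Shelah's theorem, and the sketch does not supply it.
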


\newpage
\section{The framework}\label{S1}

We will open by quoting some definitions from \cite{Sh:775} (although that paper is not a prerequisite). We investigate the notion of $\Sep(-)$ and define some relatives which we will need.

In \S2 we will use only $\Sep_3$ (although $\Sep_2$ would actually be sufficient for proving \ref{y2}, \ref{y5}). 

\begin{convention}\label{x1}
What we call $\BB$ here will be denoted as $\BB^0$ in later sections.
\end{convention}

\mn
\begin{definition}\label{x5}
Assume $\lambda > \kappa$ are regular cardinals, and let 
$\chi \leq \lambda$. Let 
$$
S \subseteq \{\delta<\lambda : \cf(\delta) = \kappa\}
$$ 
be a stationary subset of $\lambda$.

\mn 
1) We say $\bfp = \big\LL (C_\delta,C_\delta') : \delta \in S\big\RR$ is a \emph{$(\lambda,\kappa,\chi)$-$\BB$-parameter} \underline{when}:
\begin{enumerate}
    \item $C_\delta \subseteq \delta$, with $\sup(C_\delta) = \delta$ and $|C_\delta| < \chi$, \underline{or} just $\otp(C_\delta) \leq \chi$.
\sn
    \item $C_\delta' \subseteq \delta$, $\sup(C_\delta') = \delta$, and $\otp(C_\delta') = \kappa$. (We do not require that $C_\delta$ \emph{or} $C_\delta'$ be closed in $\delta$.)
\sn
    \item For all $\alpha < \lambda$, the set 
    $$
    \big\{ C_\delta \cap \alpha : \delta \in S,\ C_\delta' \ni \alpha \big\}
    $$
    has cardinality $<\lambda$.
\end{enumerate}

\mn
1A) We say {$\bfp$} is \emph{good} \underline{when} in addition, 
\begin{enumerate}
    \item [(C)$^+$] For all $\alpha < \lambda$ the set\footnote{
        $C_\delta \cap \alpha$ will suffice.
    } 
    $\big\{ (C_\delta \cap \alpha,C_\delta' \cap \alpha) :  \delta \in S,\ C_\delta' \ni \alpha\big\}$ has cardinality $<\lambda$.
\end{enumerate}

\mn
1B) If $\bfp$ just satisfies (1)(A)-(B), we call it a \emph{weak} $(\lambda,\kappa,\chi)$-$\BB$-parameter.

\mn
2A)
We say that $\bfp$ \emph{does $D$-guess clubs}, where $D$ is a filter on $\lambda$, \underline{when} for every club $E \subseteq \lambda$, 
$$
\{\delta \in S : C_\delta' \subseteq E\} \in D^+.
$$

\mn
2B) For $\bfp$ as above,
\begin{enumerate}[(a)]
    \item $\bar\beta_\delta = \LL \beta_{\delta,i} : i < \kappa\RR$ will list the elements of $C_\delta'$ in increasing order.
\sn
    \item $\beta_{\delta,<i} = \beta(\delta,<i) \defeq \bigcup\limits_{j<i} (\beta_{\delta,j}+1)$.
\end{enumerate}

\mn
2C) We may write $\lambda_\bfp$, $\kappa_\bfp$, $\beta_{\delta,i}^\bfp$, etc.~whenever there are multiple BB-parameters under discussion, or the identity of $\bfp$ is otherwise unclear from context.
 
\mn
2D) If $(\forall \delta \in S)[ C_\delta = C_\delta']$, then we may write $\bfp = \LL C_\delta : \delta \in S\RR$. We may omit $\chi$ when $\chi \defeq \min\{\theta : \delta \in S \Rightarrow \otp(C_\delta) \leq \theta\}$.

\mn
3) We say that $\olsi F = \LL F_\delta : \delta \in S\RR$ is a $(\bfp,\partial,\theta)$-\emph{coloring} if $\theta \geq 2$, $\partial \geq 2$, and $F_\delta : {}^{C_\delta}\partial \to \theta$.

\mn
4) Let $\olsi F$ (and $\bfp,\partial,\theta$) be as above, and $D$ be a filter on $\lambda$. (The default choice will be the club filter.)

We say $\bar c \in {}^S\!\theta$ (or $\in {}^\lambda\theta$) is a 
$\bfp$-$D$-$\olsi F$-\emph{BB-sequence} \underline{if} for every $\eta \in {}^\lambda\partial$ the set $\{\delta \in S : F_\delta(\eta \rest C_\delta)  = c_\delta\}$ is a member of $D^+$ (and in the default case, a stationary subset of $\lambda$).

\mn
5) We may omit $\bfp$ if both $\olsi C$ and $\olsi C'$ are clear from the context.

\mn
6) We say $\olsi C = \LL C_\delta : \delta \in S\RR$ is 
$(\lambda,\kappa)$-\emph{good} \underline{when} 
\begin{enumerate}
    \item $S$ is a stationary subset of $\{\delta < \lambda : \cf(\delta) = \kappa\}$ and a member of $\check I_\kappa[\lambda]$.
\sn
    \item $C_\delta \subseteq \delta = \sup (C_\delta)$
\sn
    \item $\otp(C_\delta) = \kappa$
\sn
    \item For every $\beta < \lambda$ the set $\{C_\delta \cap \beta : \beta \in C_\delta,\ \delta \in S\}$ has cardinality $<\lambda$.
\end{enumerate}
\end{definition}

\mn
\begin{claim}\label{x6}
Assume $\lambda > \kappa$ are regular cardinals and $\chi \in [\kappa,\lambda]$.

\sn
$1)$ If $S$ is a stationary subset of $S_\kappa^\lambda \defeq \{\delta < \lambda : \cf(\delta) = \kappa\}$ \underline{then} there exists a weak $(\lambda,\kappa,\chi)$-$\BB$-parameter $\bfp$ with $S_\bfp = S$.

\sn
\emph{1A)} If $\chi = \lambda$ \underline{then} we may omit ``weak," and set $C_{\bfp,\delta} \defeq \delta$ for all $\delta \in S$.

\sn
\emph{1B)} If $\lambda \defeq \kappa^+$ and $S \subseteq S_{<\kappa}^\lambda$ is stationary, \underline{then} we can add \emph{``}for some club $E \subseteq \lambda$, there exists a a good $\bfp$ with $\olsi C_\bfp =  \LL C_\delta : \delta \in S \cap E\RR$ such that each $C_\delta$ is a club of $\delta."$

\sn
$2)$ If $S$ is a stationary subset of $S_\kappa^\lambda$ and a member of 
$\check I_\kappa[\lambda]$, \underline{then} in part $(1)$ we may also add $``\bfp$ is \emph{good}, with $S_\bfp \defeq S \cap E$ for some 
club $E$ of $\lambda$."

\sn
$3)$ If $\lambda > \kappa^+$ \underline{then} there exists a good $(\lambda,\kappa,\chi)$-$\BB_*$-parameter.

\sn
$4)$ Every good $(\lambda,\kappa,\chi)$-$\BB$-parameter is a $(\lambda,\kappa,\chi)$-$\BB$-parameter, and every $(\lambda,\kappa,\chi)$-$\BB$-parameter is a weak one.
\end{claim}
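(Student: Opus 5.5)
The claim has several parts, and I would handle each by a direct construction or by quoting \ref{y23} and the cited club-guessing/$\check I[\lambda]$ results of \cite{Sh:108}, \cite{Sh:420}.

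\textbf{Parts (1) and (1A).} For (1), fix for each $\delta \in S$ an increasing cofinal sequence $\LL \beta_{\delta,i} : i < \kappa\RR$ in $\delta$. Put $C_\delta' = C_\delta \defeq \{\beta_{\delta,i} : i < \kappa\}$. Then $\otp(C_\delta) = \kappa \leq \chi$, so clauses (A) and (B) of \ref{x5}(1) hold and $\bfp$ is a weak parameter. For (1A), take $C_\delta \defeq \delta$, which has $\otp \leq \lambda = \chi$, and keep $C_\delta'$ as above. Clause (C) asks that the family $\{C_\delta \cap \alpha : \alpha \in C_\delta'\}$ have size $<\lambda$. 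This holds because every member equals $\alpha$, so the family is the single set $\{\alpha\}$.

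\textbf{Parts (2) and (1B).} For (2), apply \ref{y23}(1) to $S \cap S_\kappa^\lambda = S \in \check I[\lambda]$ to get a witness $(E,\bar a)$ with $\otp(a_\delta) = \kappa$ for $\delta \in S \cap E$. Set $C_\delta = C_\delta' \defeq a_\delta$ for $\delta \in S \cap E$. Coherence gives $a_\delta \cap \alpha = a_\alpha$ for $\alpha \in a_\delta$. Hence the set in (C)$^+$ has exactly one element, $(a_\alpha,a_\alpha)$, so $\bfp$ is good. For (1B), with $\lambda = \kappa^+$, use \ref{y23}(4) (applied with $\kappa$ in place of $\lambda$) to get $S \in \check I[\lambda]$. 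Run the same argument, using the variant of \ref{y23}(1) where the $a_\delta$ are taken closed. This should be obtainable via \ref{y23}(3) by replacing each set by its closure points $\sup(E\cap C\cap\beta)$. That gives clubs $C_\delta$ of $\delta$.

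\textbf{Part (3).} With $\lambda > \kappa^+$, by \cite{Sh:420} the set $S_\kappa^\lambda$ contains a stationary $S \in \check I[\lambda]$ (e.g.\ via \ref{y23}(4) for $\lambda=\mu^+$, and the general theorem otherwise). Then apply (2). I expect this to be the main obstacle: it rests entirely on the quoted existence theorem for $\check I[\lambda]$, which I would cite rather than reprove.

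\textbf{Part (4).} This is immediate. (C)$^+$ implies (C), because projecting the pairs onto their first coordinates cannot increase cardinality. And (A)–(C) trivially include (A)–(B).
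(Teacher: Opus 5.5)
Your treatment of (1), (1A), (2) and (4) is correct, and so is (3) via \cite[\S1]{Sh:420} followed by (2). That is all the paper's ``easy'' amounts to, and for (3) it cites the same source.

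The gap is in (1B). Clause \ref{y23}(3) does not produce closed sets. It replaces each $C\in\cP_\alpha$ by $\{\sup(E\cap C\cap\beta):\beta\in C\}$, which pushes the points of a weak witness into a given club; it is a club-guessing device. It yields neither closure nor coherence at limit points.

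Nor can you simply close up the $a_\delta$ and keep $C_\delta=C'_\delta$. Take $\gamma\in\mathrm{acc}(a_\delta)\setminus a_\delta$. Then $a_\delta\cap\gamma$ is just some cofinal branch through the coherent system $\{a_\beta\cup\{\beta\}:\beta<\gamma\}$, and it need not equal $a_\gamma$. The only a priori bound on the number of such branches is $\kappa^{\cf(\gamma)}$, which can exceed $\kappa$. So once these new points lie in $C'_\delta$, condition (C)$^+$ is no longer guaranteed there. Closed ladders that are coherent in your sense are essentially a partial square. That is exactly what the paper imports from \cite[4.4]{Sh:351}, and it does not follow from \ref{y23} by the manipulation you describe.

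The repair is easy and avoids partial squares altogether. Condition (C)$^+$ only quantifies over $\alpha\in C'_\delta$, and \ref{x5} explicitly does not require $C'_\delta$ to be closed. So keep $C'_\delta\defeq a_\delta$, taken from \ref{y23}(1) with $S\in\check I[\kappa^+]$ by \ref{y23}(4). Then put $C_\delta\defeq\mathrm{cl}(a_\delta)\cap\delta$, which is a club of $\delta$ of order type $\cf(\delta)<\kappa\le\chi$.

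Now fix $\alpha\in a_\delta$. The accumulation points of $a_\delta$ below $\alpha$ are exactly those of $a_\delta\cap\alpha=a_\alpha$. Hence $C_\delta\cap\alpha=\mathrm{cl}(a_\alpha)\cap\alpha$ and $C'_\delta\cap\alpha=a_\alpha$ both depend only on $\alpha$, so the set in (C)$^+$ is a singleton.

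Here $\otp(C'_\delta)=\cf(\delta)$ varies with $\delta$; that mismatch with \ref{x5}(1)(B) is already built into the statement of (1B). With this change your route to (1B) goes through. It needs only $S^{\kappa^+}_{<\kappa}\in\check I[\kappa^+]$ rather than the partial-square result the paper cites.
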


\begin{PROOF}{\ref{x6}}
Easy. 

E.g.\ for part (3), use \cite[\S1]{Sh:420}.
Part (1B) follows by \cite[4.4]{Sh:351}.
\end{PROOF}

\mn
\begin{definition}\label{x7}
Let $\kappa \leq \mu$.

\sn
1) We define $\bfU_{\!\kappa}(\mu)$ to be 
$$
\min\!\big\{ |\clU| : \clU \subseteq [\mu]^\kappa,\ (\forall v \in [\mu]^\kappa)(\exists u \in \clU) \big[ |u \cap v| = \kappa \big] \big\}.
$$

\sn
2) Let $\bfU_{\!\kappa}'(\mu)$ mean
$$
\min\!\big\{ |\clF| : \clF \subseteq {}^\kappa\!\mu \text{ and } (\forall g \in {}^\kappa\!\mu)(\exists f \in \clF)(\exists^\kappa i < \kappa ) \big[f(i) = g(i)\big]  \big\}.
$$

\mn
3) If $\clD \in \up(\kappa)$ then we let
$$
\bfU_\clD(\mu) \defeq \min\!\big\{ |\clF| : \clF \subseteq {}^\kappa\!\mu  \text{ and } (\forall g \in {}^\kappa\!\mu)(\exists f \in \clF) \big[ \big\{i < \kappa : f(i) = g(i) \big\} \in \clD\big]  \big\}.
$$

\mn
4) Accordingly, if $J$ is an ideal on $\kappa$ (or an ideal on some set $X$ containing $[X]^{\aleph_0}$) then we may write $\bfU_{\!J}(\mu)$ as shorthand for $\bfU_{\!J^+}(\mu)$, to keep notation consistent with \cite{Sh:775} and others.

\end{definition}

\mn
Obviously,
\begin{observation}\label{x10}
$1)$ If $\mu \geq 2^\kappa$ then $\bfU_\kappa(\mu) = \bfU_\kappa'(\mu)$.

\mn
$2)$ If $\mu = \mu^\kappa$ (or just $\alpha < \mu \Rightarrow |\alpha|^\kappa \leq \mu$ and $\cf(\mu) \neq \kappa$), then $\bfU_\kappa(\mu) = \bfU_\kappa'(\mu) = \mu$.
\end{observation}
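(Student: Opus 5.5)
The argument is a direct comparison of witnessing families, so I will prove part (1) by two inequalities and part (2) by a lower bound, an upper bound, and then an appeal to part (1). Throughout I assume $\kappa < \mu$; the case $\kappa = \mu$ is degenerate and I would treat it separately. Since $|\kappa \times \mu| = \mu$, I will freely regard $\bfU_{\!\kappa}(\mu)$ as computed on $[\kappa\times\mu]^\kappa$. Then a function $g \in {}^\kappa\!\mu$ can be identified with its graph, which is a member of $[\kappa \times \mu]^\kappa$.

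\emph{A preliminary lower bound.} I first check that $\bfU_{\!\kappa}(\mu) \geq \mu$. Suppose $\clU \subseteq [\mu]^\kappa$ has cardinality $<\mu$. Then $\big|\bigcup \clU\big| \leq |\clU| \cdot \kappa < \mu$. So some $v \in [\mu]^\kappa$ is disjoint from $\bigcup\clU$, and no member of $\clU$ meets $v$ in $\kappa$ many points. The same argument gives $\bfU_{\!\kappa}'(\mu) \geq \mu$: apply it to the ranges of the members of $\clF$ and pick $g$ with range outside their union.

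\emph{Part (1), first inequality: $\bfU_{\!\kappa}(\mu) \leq \bfU_{\!\kappa}'(\mu)$.} Let $\clF$ witness $\bfU_{\!\kappa}'(\mu)$, and set $\clU \defeq \{\rang(f) : f \in \clF,\ |\rang(f)| = \kappa\}$. Given $v \in [\mu]^\kappa$, let $g : \kappa \to v$ be a bijection and choose $f \in \clF$ agreeing with $g$ on $\kappa$ many coordinates. Because $g$ is injective, $f$ takes $\kappa$ many distinct values in $v$ at those coordinates. Hence $|\rang(f) \cap v| = \kappa$, so in particular $\rang(f) \in \clU$, and $\clU$ is a witness.

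\emph{Part (1), second inequality: $\bfU_{\!\kappa}'(\mu) \leq \bfU_{\!\kappa}(\mu)$.} Let $\clU \subseteq [\kappa\times\mu]^\kappa$ witness $\bfU_{\!\kappa}(\mu)$. For each $u \in \clU$ and each $w \subseteq u$ which is a partial function from $\kappa$ to $\mu$ with $|\dom(w)| = \kappa$, fix one total extension $f_{u,w} \in {}^\kappa\!\mu$. Let $\clF$ be the set of all such $f_{u,w}$.
\begin{itemize}
    \item Cardinality: $|\clF| \leq |\clU| \cdot 2^\kappa$. By the preliminary lower bound and $\mu \geq 2^\kappa$, this is $\leq |\clU|$.
    \item Witness property: given $g$, pick $u \in \clU$ with $|u \cap g| = \kappa$ and put $w \defeq u \cap g$. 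Then $w$ is a partial function with domain of size $\kappa$, so $f_{u,w}$ is defined and agrees with $g$ on $\dom(w)$.
\end{itemize}

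\emph{Part (2).} The hypothesis gives $2^\kappa \leq |\kappa|^\kappa \leq \mu$. So part (1) applies and I only need $\bfU_{\!\kappa}(\mu) = \mu$. The lower bound is the preliminary bound, so it remains to find a witness of size $\leq \mu$.
\begin{itemize}
    \item If $\mu = \mu^\kappa$, take $\clU = [\mu]^\kappa$.
    \item Otherwise let $\clU \defeq \bigcup_{\alpha<\mu} [\alpha]^\kappa$, which has cardinality $\leq \mu \cdot \sup_{\alpha<\mu}|\alpha|^\kappa \leq \mu$. It suffices to show that every $v \in [\mu]^\kappa$ has $|v \cap \alpha| = \kappa$ for some $\alpha < \mu$, since then $v \cap \alpha \in \clU$.
    \begin{itemize}
        \item If $\cf(\mu) > \kappa$, then $v$ is bounded below some $\alpha<\mu$.
        \item If $\cf(\mu) < \kappa$, write $\mu = \bigcup_{i<\cf(\mu)}\alpha_i$. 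Since $\kappa$ is regular and $\cf(\mu)<\kappa$, some $v \cap \alpha_i$ has size $\kappa$.
    \end{itemize}
\end{itemize}
This case split is exactly where the assumption $\cf(\mu) \neq \kappa$ is used.

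The only step needing care is the cardinality count in the second inequality of part (1). There the bound $|\clF| \leq |\clU|\cdot 2^\kappa$ needs the preliminary lower bound $|\clU| \geq \mu \geq 2^\kappa$ to absorb the factor $2^\kappa$. Everything else is bookkeeping.
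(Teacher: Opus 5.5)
Your proof is correct. The paper gives no argument at all (it introduces the observation with ``Obviously''), and what you wrote is the routine verification one would expect: ranges of injective $g$ for $\bfU_{\kappa}(\mu)\le\bfU'_{\kappa}(\mu)$; graphs viewed as elements of $[\kappa\times\mu]^\kappa$ for the converse, with $\mu\ge 2^\kappa$ absorbing the $2^\kappa$ subsets of each $u$; and bounded sets for the upper bound in (2).

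Two cosmetic remarks. The subcase $\cf(\mu)<\kappa$ uses that $\kappa$ is regular, which is the paper's standing assumption though not restated in the observation. The case $\kappa=\mu$ needs no separate treatment, since every hypothesis of (1) and (2) already forces $\kappa<\mu$.
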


\mn
\begin{definition}\label{x8} \ 

\noindent
1) When we write $\Sep(\chi,\mu,\partial,\theta,\Upsilon)$, we mean that there exists $\bar f = \LL f_\eps : \eps < \chi\RR$ such that:
\begin{enumerate}
    \item $f_\eps : {}^\mu\partial \to \theta$
\sn
    \item For every $\varrho \in {}^\chi\theta$, the set\footnote{
        `Sol' stands for \emph{solution}.
    }
    $\Sol_\varrho \defeq \big\{ \nu \in {}^\mu\partial : (\forall \eps < \chi) \big[ f_\eps(\nu) \neq \varrho(\eps) \big] \big\}$ has cardinality $< \Upsilon$.

    (The reader may assume $\Upsilon \leq \partial^\mu$, as the condition is vacuous otherwise.)
\end{enumerate}
Such a sequence $\bar f$ will be called a \emph{witness} for 
$\Sep(\chi,\mu,\partial,\theta,\Upsilon)$.

\mn
1A) If $\partial \defeq \theta$, we may omit it. 

\mn
2) We write $\Sep(\mu,\partial,\theta)$ to mean that $\Sep(\mu,\mu,\partial,\theta,\Upsilon)$ holds for some regular $\Upsilon \leq 2^\mu$. 

As in part (1A), $\Sep(\mu,\theta) \defeq \Sep(\mu,\theta,\theta)$.

\mn
2A) $\Sep({<}\,\mu,\theta)$ will mean that $\Sep(\sigma,\mu,\theta,\theta,\Upsilon)$ holds for some $\sigma < \mu$ and $\Upsilon$ as in part (2).


\mn
3) We may write $\Sep_1$ instead of $\Sep$, to distinguish it from $\Sep_2$ in \ref{x17} and $\Sep_3$ in \ref{x38}.
\end{definition}

\medskip
In \cite[1.11\subref{d.7}]{Sh:775} we showed $\Sep(\mu,\theta)$ holds for many values of $\mu$ and $\theta$. The following is a generalization of that theorem.
\begin{claim}\label{x20}
If at least one of the following holds, \underline{then} we have $\Sep(\mu,\mu,2^\mu,\theta,\Upsilon)$:
\begin{enumerate}[$(a)$]
    \item $\mu = \mu^\theta$ and $\Upsilon \defeq \theta$.
\sn
    \item $\bfU_\theta(\mu) = \mu$ and $\Upsilon \defeq (2^{<\theta})^+ \leq 2^\mu$.
\sn
    \item We have $\bfU_{[\sigma]^{<\theta}}(\mu) = \mu$ and $\Upsilon \defeq (2^{<\sigma})^+ \leq 2^\mu$ for some $\sigma \geq \theta$ such that $\sigma^\theta \leq \mu$.
\sn
    \item $\theta = \cf(\theta) < \mu$, $\mu$ is strong limit singular of cofinality $\neq \theta$, and (e.g.) $\Upsilon \defeq (2^{\theta+\cf(\mu)})^+$.
\sn
    \item $\Upsilon \defeq \beth_\omega(\theta) \leq \mu$.
\end{enumerate}
\end{claim}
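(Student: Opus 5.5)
The plan is to build a single witness family $\bar f = \LL f_\eps : \eps < \mu\RR$ of functions $f_\eps : {}^\mu(2^\mu) \to \theta$. The natural design is \emph{diagonal}: index the $\eps$'s by pairs $(u,h)$ or $(\alpha,h)$, where $u$ ranges over a small family of subsets of $\mu$, $h$ is a function whose domain is related to $u$, and $f_\eps(\nu)$ reads off a value of $\nu\rest u$ and codes it into $\theta$. Given $\varrho \in {}^\mu\theta$, we then bound $|\Sol_\varrho|$ by showing that any $\Upsilon$ many distinct $\nu$'s must disagree somewhere in a way that some $f_\eps$ detects, forcing $f_\eps(\nu) = \varrho(\eps)$ for one of them.

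First I reduce to a statement about $\nu$'s with few values. Since the number of $\eps$ is only $\mu$ while $\nu$ ranges over ${}^\mu(2^\mu)$, I first compose with an injection of $2^\mu$ into ${}^\mu 2$. This replaces $\nu$ by a $\mu\times\mu$ matrix of bits, so the range is effectively $2$ and the domain is $\mu$. So it suffices to prove $\Sep(\mu,\mu,2,\theta,\Upsilon)$-type statements where a solution set of size $\ge\Upsilon$ gives $\Upsilon$ pairwise distinct $\nu_\zeta \in {}^\mu 2$.

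For each $\zeta\neq\xi$ pick a point $\alpha_{\zeta,\xi}$ of disagreement. The key combinatorial step is a canonization via Erd\H{o}s--Rado, or a $\Delta$-system. From $\Upsilon$ solutions, extract a subfamily indexed by a set of size $\theta$ (resp.\ $\sigma$ or $\aleph_n$-levels) such that the restrictions to a suitable set $v$ of size $\theta$ are pairwise distinct. The functions in the family will be $f_{u,\iota}(\nu) \defeq$ "the index, in a fixed enumeration of ${}^u 2$ of type $\le 2^\theta \le \mu$ (or of a size-$\theta$ subfamily), of $\nu\rest u$," coded mod $\theta$. The sets $u$ are chosen from a covering family $\clU$ witnessing the relevant $\bfU$-number.

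The cases then go as follows.
\begin{enumerate}[$(a)$]
    \item With $\mu=\mu^\theta$, take $\clU = [\mu]^{\le\theta}$ together with all $\theta$-sequences of candidate values. This gives $\mu$ functions that diagonalize against any $\theta$ solutions.
    \item With $\bfU_\theta(\mu) = \mu$, take $\clU$ of size $\mu$ meeting every $v\in[\mu]^\theta$ in $\theta$ points. From $(2^{<\theta})^+$ solutions, Erd\H{o}s--Rado gives $\theta$ many solutions separated on a $\theta$-set, and $\clU$ catches $\theta$ of the separating points.
    \item Case $(c)$ is the same, with the ideal $[\sigma]^{<\theta}$.
    \item For strong limit singular $\mu$, use $\bfU_\theta(\mu)=\mu$ from Observation \ref{x10}(2)-type arithmetic (since $\cf(\mu)\ne\theta$) and reduce to $(b)$.
    \item Apply the pcf/RGCH result $\bfU_{[\sigma]^{<\theta}}(\beth_\omega(\theta)) = \beth_\omega(\theta)$ for some $\sigma<\beth_\omega(\theta)$ (from \cite{Sh:460}) and reduce to $(c)$.
\end{enumerate}

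The main obstacle is the diagonalization itself: a fixed $\varrho$ must be defeated by \emph{every} large set of solutions, while $\theta$ values per function suffice only to kill $\theta$ candidates on one $u$. I will handle this by letting each $f_{u,\iota}$ map $\nu$ to the position of $\nu\rest u$ in the $\iota$-th of $\mu$ many $\theta$-lists of elements of ${}^u2$ (value $0$ if absent). Given $\varrho$ and $\theta$ solutions whose restrictions to $u$ are distinct and all lie in that list, some index $i<\theta$ is not equal to $\varrho(u,\iota)$. The solution in position $i$ is then not a true solution, a contradiction. Verifying that the needed lists can be indexed by $\mu$ parameters is exactly where the hypotheses $\mu^\theta=\mu$, $\bfU=\mu$, or RGCH enter.
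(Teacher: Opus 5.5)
Your architecture matches the paper's for case $(a)$, and there is a genuine gap in $(b)$ and $(c)$. The shared architecture is: reduce to ${}^\mu 2$ (the paper does this only implicitly), index $\mu$ functions by a set from a covering family plus a $\theta$-list of restrictions, and show that every $\Lambda$ of size $\Upsilon$ is mapped \emph{onto} $\theta$ by one of them. The paper gets its functions by taking all those lying in an elementary submodel $N$ of size $\mu$. Your closing sentence has the logic reversed. It is the solution in position $\varrho(u,\iota)$ that receives $f_{u,\iota}$-value $\varrho(u,\iota)$ and so is not in $\Sol_\varrho$. A solution in some position $i\neq\varrho(u,\iota)$ gives no contradiction.

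In $(b)$ and $(c)$, knowing that $\theta$ solutions have pairwise distinct restrictions to $v$ and that $|u\cap v|=\theta$ for some $u\in\clU$ does not give $\theta$ distinct restrictions to $u$. Suppose $u\cap v=\{\zeta_i:i<\theta\}$ with $\zeta_i$ separating $\nu_{2i}$ from $\nu_{2i+1}$. It is consistent that every $\nu_{2i}$ is $0$ and every $\nu_{2i+1}$ is $1$ on $u\cap v$, with all other pairs separated on $v\setminus u$; then only two restrictions survive. What you need is a set $W_0$ of separating points such that \emph{every} $\theta$-subset of $W_0$ still separates $\theta$ members of $\Lambda$. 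An unspecified appeal to Erd\H{o}s--Rado or a $\Delta$-system does not produce this. In $(c)$ it also does not produce a $W_0$ of size $\sigma$, which is what the hypotheses $\bfU_{[\sigma]^{<\theta}}(\mu)=\mu$ and $\Upsilon=(2^{<\sigma})^+$ are designed for.

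The missing ingredient is the paper's comb construction. By induction on $i<\sigma$, choose $\alpha_i\neq\beta_i$ in $\Lambda$ and $\zeta_i<\mu$ with $\eta_{\alpha_i}(\zeta_i)=0$, $\eta_{\beta_i}(\zeta_i)=1$, and $\eta_{\alpha_i}\rest\{\zeta_j:j<i\}=\eta_{\beta_i}\rest\{\zeta_j:j<i\}$. This never gets stuck: $|\Lambda|=\Upsilon>2^{|i|}$, so pigeonhole gives two distinct elements with equal restriction to $\{\zeta_j:j<i\}$, and they differ somewhere else. Now let $W\subseteq W_0=\{\zeta_i:i<\sigma\}$ have size $\theta$. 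The unordered pairs $\{\eta_{\alpha_i}\rest W,\eta_{\beta_i}\rest W\}$ with $\zeta_i\in W$ are pairwise distinct, because for $j<i$ the $i$-th pair agrees at $\zeta_j$ while the $j$-th does not. Hence at least $\theta$ restrictions to $W$ are distinct. The covering hypothesis then yields such a $W$: take a $\theta$-subset of $\rang(f)\cap W_0$ for a suitable $f$ in the witness family, giving at most $\mu\cdot\sigma^\theta=\mu$ candidates. Your list functions then finish the argument.

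A minor point on $(e)$: the relevant result from \cite{Sh:460} must be applied at $\mu$ itself, giving $\bfU_{[\sigma]^{<\theta}}(\mu)=\mu$, not at $\beth_\omega(\theta)$.
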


\begin{PROOF}{\ref{x20}}
Let 
$\bar\eta = \LL \eta_\beta : \beta < 2^\mu\RR$ list ${}^\mu(2^\mu)$ 
without repetition. 
Let $\chi \defeq \big(2^{2^\mu}\big)^+$, and let $N \prec (\clH(\chi),\in)$ be of cardinality $\mu$ such that $(\mu+1) \cup \{\bar\eta\} \subseteq N$. Let 
$\bar f = \LL f_\eps : \eps < \mu\RR$ list all the functions from ${}^\mu(2^\mu)$ 
to $\theta$ which are members of $N$. It will suffice to prove
\begin{enumerate}
    \item [$\boxplus_1$]  For every  
    $\varrho \in {}^\mu(2^\mu)$, the set
    $$
    \Sol_\varrho \defeq \big\{ \eta \in {}^\mu(2^\mu) : (\forall \eps < \mu) \big[ f_\eps(\eta) \neq \varrho(\eps) \big] \big\}
    $$
    has cardinality $< \Upsilon$.
\end{enumerate}
For \emph{this} it will suffice to prove
\begin{enumerate}
    \item [$\boxplus_2$] For every $\Lambda \subseteq {}^\mu(2^\mu)$ of cardinality $\Upsilon$, for some $\eps < \mu$, we have 
    $$
    \{f_\eps(\eta) : \eta \in \Lambda\} = \theta.
    $$
    (That is, $f_\eps \rest \Lambda$ is a surjection onto $\theta$.)
\end{enumerate}

So for some $u \in [2^\mu]^\Upsilon$ 
(with $\otp(u) = \Upsilon$, for simplicity), {let} $\{\eta_\alpha : \alpha \in u\}$ list the elements of $\Lambda$ without repetition.  

\mn
\textbf{Case 1:} $\mu = \mu^\theta$ and $\Upsilon \defeq \theta$.

For $\alpha \neq \beta \in u$, choose $\zeta_{\alpha,\beta} \in \mu$ such that $\nu_\alpha(\zeta_{\alpha,\beta}) \neq \nu_\beta(\zeta_{\alpha,\beta})$. Let 
$$
v \defeq \big\{ \zeta_{\alpha,\beta} : \alpha \neq \beta \in u \big\}.
$$

So clearly $v \in [\mu]^{\leq\theta}$ and $g : {}^v2 \to \theta$ both belong to $N$, where $g$ is the function which maps $\eta_\alpha \rest v \mapsto \otp(u \cap \alpha)$ for each $\alpha \in u$ and sends all other elements of ${}^v2$ to zero. Let $f : {}^\mu(2^\mu) \to \theta$ be defined by $\eta \mapsto g(\eta \rest v)$; clearly this is a member of $N$ as well. Hence $f = f_\eps$ for some $\eps < \mu$.

Now check.

\bn
\textbf{Case 2:} $\bfU_\theta(\mu) = \mu$ and $\Upsilon \defeq (2^\theta)^+ < 2^\mu$.

This is simply a special case of Case 3.

\bn
\textbf{Case 3:} $2^\theta \leq \mu$, and $\bfU_{[\sigma]^{<\theta}}(\mu) = \mu$ and $\Upsilon \defeq (2^{<\sigma})^+$ for some $\sigma \in [\theta,\mu]$.

We will try to choose $(\alpha_i,\beta_i,\zeta_i)$ by induction on $i < \sigma$ such that:
\begin{itemize}
    \item $\alpha_i \neq \beta_i$ are members of $u$.
\sn
    \item $\alpha_i,\beta_i \in u \setminus \{\alpha_j,\beta_j : j < i\}$
\sn
    \item $\zeta_i < \mu$
\sn
    \item $\eta_{\alpha_i}(\zeta_i) = 0$ and $\eta_{\beta_i}(\zeta_i) = 1$.
\sn
    \item If $j < i$ then $\eta_{\alpha_i}(\zeta_j) = \eta_{\beta_i}(\zeta_j)$. 
\end{itemize}

\mn
\textbf{Subcase A:} We succeed.

Let 
$W_0 \defeq \{\zeta_i : i < \sigma\}$ 
(so $W_0 \in [\mu]^\sigma$).
Now, using `$\bfU_{[\sigma]^{<\theta}}(\mu) = \mu$,' there exists 
$W \in [W_0]^\theta$ which
belongs to $N$, and we continue as in Case 1.

\mn
\textbf{Subcase B:} We get stuck at stage $i_*$, for some $i_* < \sigma$.

As $|u| = \Upsilon > 2^{|i_*|}$, there are $\alpha \neq \beta$ from $u$ such that 
$$
\eta_\alpha \rest \{\zeta_j : j < {i_*}\} = \eta_\beta \rest \{\zeta_j : j < {i_*}\}.
$$ 
This is an easy contradiction.

(Note that we can actually use $\Upsilon \defeq \sum\limits_{i < \sigma} (2^{|i|})^+$ instead of $(2^{<\sigma})^+$.)



\bn
\textbf{Case 4:} $\theta = \cf(\theta) < \mu$, $\mu$ is strong limit singular of cofinality 
$\neq \theta$, and $\Upsilon \defeq (2^{\theta+\cf(\mu)})^+$.

This is also a special case of Case 3.

\bn
\textbf{Case 5:} $\Upsilon \defeq \beth_\omega(\theta) \leq \mu$.

By \cite{Sh:460}, this is also covered by Case 3.
\end{PROOF}

\mn
We introduce the following relative of $\Sep = \Sep_1$ which will be used in this work.
\begin{definition}\label{x17}
1) Let $\Sep_2(\chi,\mu,\partial,\theta,\kappa,\clD)$ mean that there exists a sequence $\bar f = \LL f_{\eps,i} : \eps < \chi,\ i < \kappa\RR$ \emph{witnessing it}. By this, we mean that the following clauses hold.
\begin{enumerate}
    \item $\clD \in \up(\kappa)$
\sn
   \item $f_{\eps,i} : {}^\mu\partial \to \theta$
\sn
    \item If $\cP_i \subseteq {}^\mu\partial$ has\footnote{
        We may let this sequence be constant in $i$ --- if so we may write $\cP$ instead of $\olsi\cP = \LL \cP_i : i < \kappa\RR$.
    } 
    cardinality $<\partial^\mu$ (for $i < \kappa$), \underline{then} we can find a sequence
    $\bar\varrho = \LL \varrho_i : i < \kappa\RR$ such that:
    \begin{enumerate}
        \item $\varrho_i \in {}^\mu\theta$
\sn
        \item If $\bar\nu = \LL \nu_i : i < \kappa\RR \in \prod\limits_{i < \kappa} \cP_i$ \underline{then} there exist $\eps < \mu$ and $u \in \clD$ such that
        $$
        i \in u \Rightarrow f_{\eps,i}(\nu_i) = \varrho_i(\eps).
        $$
    \end{enumerate}
\end{enumerate}

\sn
2) If $\clD \defeq [\kappa]^\kappa$ we may omit it.
$\Sep_2(\mu,\theta,\kappa)$ will mean $\Sep_2(\mu,\mu,\theta,\theta,\kappa)$.
\end{definition}

\mn
Recalling Definition \ref{x5},
\begin{definition}\label{x23}
1)
We say that $\bfp$ has the $(D,\partial,\theta)$-$\olsi F$-\emph{BB-property}\footnote{
    Later, we will write `$\BB^0$-property.'
}
\underline{when} there exists a $\bfp$-$D$-$\olsi F$-BB-sequence, where:
\begin{enumerate}
    \item $\bfp$ is a $(\lambda,\kappa,\chi)$-BB-parameter.
\sn
    \item $D$ is a filter on $\lambda$.
\sn
    \item $\olsi F$ is a $(\bfp,\partial,\theta)$-coloring.
\end{enumerate}

\mn
2) We say that $\bfp$ has the $(D,\partial,\theta)$-{BB-property} \underline{when} it has $(D,\partial,\theta)$-$\olsi F$-{BB-property} for every $(\bfp,\partial,\theta)$-coloring $\olsi F$.

\mn
3) If $D$ is the club filter on $\lambda$, we may omit it.
\end{definition}

\mn
We now quote the main claim of the previous paper -- \cite[1.10\subref{d.6}]{Sh:775} -- but we will not use it here.
\begin{claim}\label{x26}
Assume
\begin{enumerate}[$(a)$]
    \item $\lambda \defeq \cf(2^\mu)$
\sn
    \item $D$ is a $\mu^+$-complete filter on $\lambda$ extending the club filter.
\sn
    \item $\kappa = \cf(\kappa) < \chi \leq \lambda$
\sn
    \item $\bfp = \big\LL (C_\delta,C_\delta') : \delta \in S \big\RR$ is a good $(\lambda,\kappa,\chi)$-$\BB$-parameter, where $S \in D$.
\sn
    \item $2^{<\chi} \leq 2^\mu$ and $\theta \leq \mu$.
\sn
    \item $\alpha < 2^\mu \Rightarrow \trp_\kappa^+(|\alpha|) \leq 2^\mu$ (By this we mean that every tree with $|\alpha|$-many nodes and $\kappa$ levels has $<2^\mu$-many $\kappa$-branches.)
\sn
    \item $\Sep_1(\mu,\theta)$.
\end{enumerate}

\sn
\underline{Then} $\bfp$ has the $(D,2^\mu,\theta)$-$\BB$-property. (Note that this means that possibly $\theta>2$; i.e.\ we have more than two colors.)
\end{claim}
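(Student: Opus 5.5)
Claim \ref{x26} is the main claim of \cite{Sh:775}, quoted here without proof, so the plan is the standard black-box argument from there.

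\textbf{Setup.} Fix a $(\bfp,2^\mu,\theta)$-coloring $\olsi F$. By (e) and $|C_\delta|<\chi$, each $C_\delta$ has at most $(2^\mu)^{<\chi}=2^\mu$ restrictions $\eta\rest C_\delta$. Choose an increasing continuous sequence $\LL \clU_\alpha:\alpha<\lambda\RR$ of subsets of ${}^{\lambda>}(2^\mu)$ with each $|\clU_\alpha|<2^\mu$ and union covering everything relevant; this is possible since $\lambda=\cf(2^\mu)$. Goodness, clause (C)$^+$, says that for each $\alpha$ there are fewer than $\lambda$ possible traces $C_\delta\cap\alpha$ (with $C_\delta'\cap\alpha$). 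Using this, I attach to each $\delta\in S$ and $i<\kappa$ a set $\cP_{\delta,i}$ of size $<2^\mu$. It consists of the "candidate" partial functions on $C_\delta\cap\beta_{\delta,i}$ drawn from $\clU_{\beta_{\delta,i}}$, coded as members of ${}^\mu(2^\mu)$. Coherence of the traces makes $\cP_{\delta,i}$ depend only on $C_\delta\cap\beta_{\delta,i}$.

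\textbf{Step 1: trees and clause (f).} Consider all $\nu\in{}^{C_\delta}(2^\mu)$ whose initial segments $\nu\rest\beta_{\delta,i}$ lie in $\cP_{\delta,i}$ for every $i<\kappa$. These are $\kappa$-branches of a tree with $\kappa$ levels and fewer than $2^\mu$ nodes. By (f) there are fewer than $2^\mu$ such $\nu$, so they can be enumerated inside some single $\clU_{\gamma}$.

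\textbf{Step 2: applying $\Sep_1(\mu,\theta)$.} Let $\LL f_\eps:\eps<\mu\RR$ witness $\Sep_1(\mu,\theta)$, with solution sets of size $<\Upsilon\le 2^\mu$. I split $S$ into $\mu$ pieces using $\mu^+$-completeness of $D$, and use $F_\delta$ composed with the coding of the branch set to define, for each guess $\varrho$, a candidate $c_\delta$. I then argue by contradiction. Suppose for each $\eps<\mu$ the colors built from $f_\eps$ fail, each failure witnessed by some $\eta_\eps$ and a club $E_\eps$. Let $E=\bigcap_\eps E_\eps$, which lies in $D$ by $\mu^+$-completeness. Now there are $2^\mu$ many $\eta$'s to consider, indexed along $\lambda$, but solution sets have size $<\Upsilon$. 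Hence some $\delta\in S\cap E$ would have $\lambda$-many, in fact $\ge\Upsilon$, distinct restrictions all solving the same $\varrho$. That contradicts the choice of $\bar f$.

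\textbf{Main obstacle.} The hardest step is the diagonalization. One must guarantee that for every $\eta$, at a $D$-positive set of $\delta$, the restriction $\eta\rest C_\delta$ actually lies in the small set from Step 1 that was predicted in advance. This is done by a Fodor-type pressing-down argument on $\delta\mapsto$ (the index of $\eta\rest C_\delta\cap\beta_{\delta,i}$ in $\clU$), which uses goodness and $S\in D$. It must also be reconciled with the $\mu$ many $\Sep$-coordinates through $\mu^+$-completeness.
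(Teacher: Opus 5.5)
The paper gives no proof of this claim; it is quoted from \cite{Sh:775} and not used. The proof of Lemma~\ref{a8} shows the intended shape: $\mu$ candidate sequences $\bar c_\eps$, failure witnesses $(\eta_\eps,E_\eps)$, a club of closure points, then $\mu^+$-completeness. You name the right ingredients, but the argument does not go through as written.

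\textbf{Step~1: wrong index and no amalgamation.} The candidates cannot be drawn from $\clU_{\beta_{\delta,i}}$, because nothing forces $\eta\rest(C_\delta\cap\beta_{\delta,i})$ into the $\beta_{\delta,i}$-th piece. What is true is this. For each $\beta$ there are fewer than $\lambda$ traces $C=C_\delta\cap\beta$ (clause (C) of Definition~\ref{x5}(1)). Hence $\zeta_\beta\defeq\sup_C\min\{\xi:\eta\rest C\in\clU^C_\xi\}<\lambda$. For $\delta$ in the club $E_*$ of closure points of $\beta\mapsto\zeta_\beta$, every $\eta\rest(C_\delta\cap\beta_{\delta,i})$ lies in the $\delta$-th piece. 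So the tree at $\delta$ must be built from the $\delta$-th pieces. This is what the bound on traces is for, not making $\cP_{\delta,i}$ depend only on the trace. It is also a closure argument, not pressing down: Fodor gives only a stationary set, not a member of $D$, and $\delta\mapsto\sup_{i<\kappa}\zeta_{\beta_{\delta,i}}$ need not be regressive. Second, the tree must be built for a single function. You never amalgamate the $\mu$ witnesses into $\eta^*(\alpha)\defeq\LL\eta_\eps(\alpha):\eps<\mu\RR\in{}^\mu(2^\mu)\cong 2^\mu$. If each $\eta_\eps$ is predicted separately, the set of possible tuples $\LL F_\delta(\eta_\eps\rest C_\delta):\eps<\mu\RR$ is a product of $\mu$ small sets. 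It can have size $2^\mu$, and then $\Sep_1$ gives nothing.

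\textbf{Step~2: $\Sep_1$ is used backwards.} This is the decisive gap. The counting against $\Upsilon$ belongs to the construction of $\bar c$, before any $\eta$ is given. Let $B_\delta$ be the branch set from Step~1, now for functions into ${}^\mu(2^\mu)$; it has size $<2^\mu$ by $\kappa<\cf(2^\mu)$ and (f). For $b\in B_\delta$ put $b_\eps\defeq\LL b(\alpha)(\eps):\alpha\in C_\delta\RR$. Let $\cP_\delta\defeq\{\LL F_\delta(b_\eps):\eps<\mu\RR : b\in B_\delta\}$. Pick $\nu_\delta\in{}^\mu\theta\setminus\bigcup_{\rho\in\cP_\delta}\Sol_\rho$, which is possible since $|\cP_\delta|<2^\mu$ and $\Upsilon\le 2^\mu$ is regular. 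Set $c_{\eps,\delta}\defeq f_\eps(\nu_\delta)$.

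Now take $\delta\in S\cap E_*\cap\bigcap_\eps E_\eps\in D$. The single tuple $\rho_\delta\defeq\LL F_\delta(\eta_\eps\rest C_\delta):\eps<\mu\RR$ lies in $\cP_\delta$, so $\nu_\delta\notin\Sol_{\rho_\delta}$. Hence some $\eps_\delta$ satisfies $F_\delta(\eta_{\eps_\delta}\rest C_\delta)=c_{\eps_\delta,\delta}$. By $\mu^+$-completeness one $\eps$ works on a $D^+$ set, which must meet $E_\eps$, a contradiction.

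Your closing inference (``there are $2^\mu$ many $\eta$'s \dots\ some $\delta$ would have $\ge\Upsilon$ distinct restrictions all solving the same $\varrho$'') does not correspond to any valid step. At each $\delta$ the adversary contributes exactly one tuple, so no family of $\Upsilon$ solutions appears. Without the in-advance choice of $\nu_\delta$, your candidate colours $c_\delta$ are not even defined.
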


\sn
\begin{remark}\label{x29}

1) If (e.g.) $\mu$ is strong limit singular, $\kappa \defeq \cf(\mu)$, $\lambda \defeq \cf(2^\mu)$, and $\kappa + \theta < \mu$, \underline{then} the only assumption in \ref{x26} which does not follow is clause $(f)$ (which does hold for many regular $\kappa < \mu$ by \cite{Sh:460}). For more, see \cite{Sh:829}.

Our aim here is to cover more cases of $\kappa$, and construct relatives of this property which are easier to use and have more applications.

\sn
2) By \cite[\S1]{Sh:420}, there are are many $S$ as required 
(usually from $\check I_\kappa[\lambda]$). Still, this restricts our choices.

\sn
3) `Good $\bfp$' is also a restriction, as the result covers fewer $S$-s. 
In fact, if $S \notin \check I_\kappa[\lambda]$ then there is no good $\bfp$ with $S_\bfp \defeq S$ {(as we cannot find $\LL C_\delta' : \delta \in S\RR$)}.
Another one of our goals is to eliminate this assumption.

\sn
4) But we would like to have parallel results using $\Sep_2$ or $\Sep_3$. (This will be done in \S2.)

\sn
5) An earlier definition of $\Sep_1(\mu,\theta)$ used `$\Upsilon < 2^\mu \vee \Upsilon \defeq 2^\mu \in \Reg$' instead of `$\Upsilon \leq 2^\mu \wedge \Upsilon \in \Reg$.' 

This was a natural generalization, because the notation is tailor-made for proofs which rely on induction on $\Upsilon \leq 2^\mu$.  As the $\Upsilon$ argument is an upper bound for the cardinality of some specific set, clearly $\Sep_1(\ldots,\Upsilon)$ implies $\Sep_1(\ldots,\Upsilon^+)$.  Moreover, as every successor ordinal is regular, if $\Upsilon < 2^\mu$ \underline{then} $\Upsilon^+ < 2^\mu \vee \Upsilon^+ \in \Reg$. However, we would have to rewrite existing proofs to match this new definition, and that would be more trouble than it's worth.
\end{remark}

\mn
\begin{claim}\label{x32}
Assume $\kappa$ is regular, $\mu > \theta = \theta^{<\kappa}$, and 
$\partial \in [2,2^\mu]$. If at least one of the following holds \underline{then} we have $\Sep_2(\mu,\mu,\partial,\theta,\kappa)$.
\begin{enumerate}[$(a)$]
    \item $\kappa \neq \cf(\mu)$, 
    $\alpha < \mu \Rightarrow |\alpha|^\kappa \le \mu$,
    and $\Sep_1(\mu,\partial,\theta)$.
\sn
    \item $\bfU_{\!\kappa}'(\mu) = \mu \geq \beth_\omega(\theta+\kappa)$
\sn
    \item $\bfU_{\!\kappa}'(\mu) = \mu$ and $\Sep_1(\mu,\partial,\theta)$.
\sn
    \item We have $\bfU_{[\sigma]^{<\theta}}(\mu) = \mu$ for some $\sigma \geq \theta$ with $\sigma^\theta \leq \mu$ and $(2^\sigma)^+ < 2^\mu$.
\end{enumerate}
\end{claim}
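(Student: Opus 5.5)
The strategy follows the proof of Claim \ref{x20}: build the witness $\bar f$ as an enumeration of a small elementary submodel's functions. Fix $\chi$ large and $N \prec (\clH(\chi),\in)$ with $|N| = \mu$ and $\mu+1 \subseteq N$. If $\bfU_{\!\kappa}'(\mu)=\mu$ or $\bfU_{[\sigma]^{<\theta}}(\mu)=\mu$ holds, choose $N$ to contain a witnessing family: a family $\clF$ of size $\mu$ as in Definition \ref{x7}(2), respectively a cofinal family in $[\mu]^{\theta}$ as in Case 3 of \ref{x20}. In case (a) use instead $[\alpha]^\kappa \subseteq N$ for $\alpha<\mu$. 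Let $\LL f^*_\eps : \eps<\mu\RR$ be the given $\Sep_1$ witness (in cases (a), (c)), or, in cases (b) and (d), the $\Sep_1(\mu,\mu,\partial,\theta,\Upsilon)$ witness obtained from \ref{x20}(e) and \ref{x20}(c) respectively. Put it in $N$.

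The functions $f_{\eps,i}$ will be the following. Fix $\eps \mapsto (\eps_1,g_\eps)$ enumerating pairs where $\eps_1<\mu$ and $g_\eps \in N\cap{}^\kappa\mu$ (using $\bfU'_\kappa(\mu)=\mu$, or $\kappa$-sequences of bounded subsets of $\mu$ in case (a)). The intended reading is that coordinate $i$ uses $f^*_{g_\eps(i)}$. Since $\theta=\theta^{<\kappa}$, we may also code the $<\kappa$ earlier values. So let $f_{\eps,i}(\nu)$ encode $\LL f^*_{g_\eps(j)}(\nu) : j \le i \RR$ as an element of $\theta$; this uses $\theta^{|i|+1}=\theta$.

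Given $\LL \cP_i : i<\kappa\RR$ with $|\cP_i|<\partial^\mu$, the choice of $\varrho_i$ comes from $\Sep_1$. For each $i$, fewer than $\partial^\mu$ many $\nu$ would have to be "caught". The $\Sep_1$ conclusion (the set $\Sol_\varrho$ has size $<\Upsilon \le \partial^\mu$) lets us choose $\varrho^*_i\in{}^\mu\theta$ such that each $\nu\in\cP_i$ satisfies $f^*_\zeta(\nu)=\varrho^*_i(\zeta)$ for some $\zeta$. This is done by a counting/diagonal argument: choose $\varrho^*_i$ outside the $<\partial^\mu$ many "bad" values, using that the map $\varrho\mapsto\Sol_\varrho$ has small fibers.

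For each $\bar\nu\in\prod\cP_i$ this gives $h_{\bar\nu}(i)=$ the least such $\zeta$, so $h_{\bar\nu}\in{}^\kappa\mu$. By $\bfU'_\kappa(\mu)=\mu$ some $g\in N$ agrees with $h_{\bar\nu}$ on a set $u\in[\kappa]^\kappa$. In case (a) the argument is different: since $\cf(\mu)\neq\kappa$, $\rang(h_{\bar\nu})$ is bounded in $\mu$, so it is covered by a member of $N$, and the $|\alpha|^\kappa\le\mu$ hypothesis puts $h_{\bar\nu}$ itself in $N$. Setting $\varrho_i(\eps)$ to be the code of $\LL\varrho^*_j(g_\eps(j)) : j\le i\RR$ then yields $f_{\eps,i}(\nu_i)=\varrho_i(\eps)$ for all $i\in u$, with $u\in\clD=[\kappa]^\kappa$.

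The main obstacle is the step choosing $\varrho^*_i$ so that every $\nu\in\cP_i$ is caught. $\Sep_1$ only bounds each $\Sol_\varrho$, which is the dual statement, and $|\cP_i|$ may exceed $\Upsilon$. I would first try choosing $\varrho^*_i$ coordinatewise by induction, with $\cP_i$ enumerated in order type $<\partial^\mu$. If that fails, I would weaken the requirement to "caught on a $\clD$-large set of $i$", which is all clause (c)(b) needs, and absorb the exceptional $<\Upsilon$ many $\nu$'s into the freedom in choosing $g$. The coding by $\theta^{<\kappa}=\theta$ is what makes the coordinates compatible along $i$.
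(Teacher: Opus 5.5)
\textbf{Main gap.} The step you flag as the main obstacle is a real gap, and the justification you sketch does not close it. $\Sep_1$ says that each $\varrho\in{}^\mu\theta$ misses fewer than $\Upsilon$ arguments. That does not give any $\varrho$ with $\Sol_\varrho\cap\cP_i=\varnothing$ once $|\cP_i|\ge\Upsilon$. The counting also fails: for a single $\nu$, the set of $\varrho$ with $\nu\in\Sol_\varrho$, namely $\{\varrho:(\forall\eps)[\varrho(\eps)\ne f^*_\eps(\nu)]\}$, already has size $2^\mu$ when $\theta\ge 3$.

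\textbf{What the paper does.} The paper uses $\Sep_1$ the other way round. There $\varrho_i$ is an \emph{argument} of the functions $f^\circ_\eps$ (your $f^*_\eps$), chosen outside $\Lambda_i=\bigcup_{\nu\in\cP_i}\Sol_\nu$. This set has size $<2^\mu$, since $|\cP_i|<2^\mu$ and $\Upsilon\le 2^\mu$ is regular. Then every $\nu\in\cP_i$ has some $\eps$ with $f^\circ_\eps(\varrho_i)=\nu(\eps)$. This is also the orientation in which Lemma \ref{a8} uses the property: there $c_{\eps,\delta,i}=f_{\eps,i}(\varrho^\delta_i)$, and the $\nu_i$ are the value sequences $\rho_{\beta,F}$. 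With the roles exactly as typed in Definition \ref{x17}, constantly zero $f_{\eps,i}$ and $\varrho_i$ already witness $\Sep_2$ (take $\eps=0$, $u=\kappa$). So the literal reading you worked with is empty, and the swapped reading is the one that matters.

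\textbf{A caveat.} In the swapped orientation the $\nu_i$ are given, so your re-indexing $\varrho_i(\eps):=\varrho^*_i(g_\eps(i))$ is not available. Choosing $\eps$ with $g_\eps\rest u=\LL\eps_i:i\in u\RR$ gives $f_{\eps,i}(\varrho_i)=\nu_i(\eps_i)$, but one needs $\nu_i(\eps)$ at the common index. The paper's $\circledast_4$ also stops at index $\eps_i$, so this matching has to be argued separately.

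\textbf{The coding is an error.} Coding via $\theta^{<\kappa}=\theta$ breaks the final identity. If $f_{\eps,i}(\nu)$ codes $\LL f^*_{g_\eps(j)}(\nu):j\le i\RR$, then $f_{\eps,i}(\nu_i)=\varrho_i(\eps)$ requires $f^*_{g_\eps(j)}(\nu_i)=\varrho^*_j(g_\eps(j))$ for every $j\le i$. That asks for $\nu_i$ to be caught by $\varrho^*_j$, which was chosen for $\cP_j$. It asks this even at $j\notin u$, where $g_\eps(j)$ is unrelated to $h_{\bar\nu}(j)$. Nothing arranges this. The paper uses no coding, only $f_{\eps,i}=f^\circ_{\nu_\eps(i)}$.

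\textbf{Case (a) is an error as written.} $\kappa\ne\cf(\mu)$ does not make $\rang(h_{\bar\nu})$ bounded when $\cf(\mu)<\kappa$. The paper instead picks $\zeta<\mu$ such that $u=\{i<\kappa:\eps_i<\zeta\}$ has size $\kappa$, which is possible because $\kappa$ is regular and $\ne\cf(\mu)$. It then uses the bounded sequence equal to $\eps_i$ on $u$ and to $0$ elsewhere. That sequence lies in $\clF=\{\nu\in{}^\kappa\mu:\rang(\nu)\text{ is bounded}\}$, which has size $\mu$ by $|\alpha|^\kappa\le\mu$. No elementary submodel is needed.
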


\begin{PROOF}{\ref{x32}}
\textbf{Case (a):}

Let $\bar f^\circ = \LL f_\eps^\circ : \eps < \mu\RR$ witness 
$\Sep_1(\mu,\partial,\theta)$ (hence $f_\eps^\circ$ is a function from ${}^\mu\partial$ to $\theta$). Let 
$$
\clF \defeq \{\nu \in {}^\kappa\!\mu : \rang(\nu) \text{ is a bounded subset of } \mu\}.
$$
Recalling Definition \ref{x8}(2), {let $\Upsilon$ be a regular cardinal 
$\leq 2^\mu$} such that\\ $\Sep_1(\mu,\mu,\partial,\theta,\Upsilon)$ holds.

By the assumption `$\alpha < \mu \Rightarrow |\alpha|^\kappa \leq \mu$,'
clearly $|\clF| = \mu$. Let $\LL \nu_\eps : \eps < \mu\RR$ list the members of $\clF$, and we shall define
\begin{enumerate}
    \item [$\circledast_1$] $\bar f = \LL f_{\eps,i} : \eps < \mu,\ i < \kappa\RR$, where $f_{\eps,i} \defeq f_{\nu_\eps(i)}^\circ$.
\end{enumerate}

It will suffice to prove that $\bar f$ witnesses $\Sep_2(\mu,\mu,\partial,\theta,\kappa)$. So let $\cP_i \subseteq {}^\mu\partial$ be of cardinality 
$< \partial^\mu = 2^\mu$ for $i < \kappa$, and we need to construct 
$\bar\varrho$ as in \ref{x17}(1)(C).

Fix $i < \kappa$, so by \ref{x8}(1)(B), for every $\rho \in {}^\mu\theta$ the set 
$$
\Sol_\rho \defeq \big\{ \nu \in {}^\mu\theta : (\forall \eps < \chi) \big[ f_\eps^\circ(\nu) \neq \rho(\eps) \big] \big\}
$$ 
has cardinality $< \Upsilon$.
As $|\cP_i| < 2^\mu$ and\footnote{
    Recall that we permit $\Upsilon \defeq 2^\mu$ if it is regular.
} $\Upsilon = \cf(\Upsilon) \le 2^\mu$, 
the set 
$\Lambda_i \defeq \bigcup\limits_{\nu \in \cP_i} \Sol_\nu$ has cardinality $< 2^\mu$, so we can choose $\varrho_i \in  {}^\mu\partial \setminus \Lambda_i$.

It will suffice to prove that $\LL\varrho_i : i < \kappa\RR$ is as promised. So let $\bar \nu = \LL \nu_i : i < \kappa\RR \in \prod\limits_{i < \kappa} \cP_i$, and we have to find $\eps < \mu$ {and $u \in [\kappa]^\kappa$} as promised in \ref{x17}(1)(C)(b).

For each $i < \kappa$, by our choice of $\varrho_i$ we know $\varrho_i \notin \Sol_{\nu_i}$. This means 
\begin{enumerate}
    \item [$\circledast_2$] There is $\eps_i < \mu$ such that 
    $f_{\eps_i}^\circ(\nu_i) = \varrho_i(\eps_i).$
\end{enumerate}

As $\cf(\mu) \neq \cf(\kappa)$, there exists $\zeta < \mu$ such that the set $u \defeq \{i < \kappa : \eps_i < \zeta\}$ has cardinality $\kappa$ (and even order type $\kappa$). 
\begin{enumerate}
    \item [$\circledast_3$] Let $\nu \in {}^\kappa\zeta$ be the sequence 
$$
\nu(i) \defeq 
\begin{cases}
    \eps_i &\text{if } i \in u\\
    0    &\text{otherwise,} 
\end{cases}
$$
and let $\eps < \mu$ be such that $\nu = \nu_\eps$.
\end{enumerate}
Now $\eps$ is as required.

Why? For every $i \in u$, we have
\begin{enumerate}
    \item [$\circledast_4$] $f_{\eps,i}(\nu_i) = f_{\nu_\eps(i)}^\circ(\nu_*) = f_{\eps_i}^\circ(\nu_i) = \varrho_i(\eps_i)$.
\end{enumerate}
[The first equality is the definition from $\circledast_1$, the second holds by the choice of $\nu$ in $\circledast_3$, and the third by the choice of $\eps_i$ in $\circledast_2$.]

\mn
\textbf{Case (b):} Assume $\bfU_{\!\kappa}(\mu) = \mu \geq \chi \defeq \beth_\omega(\theta+\kappa)$.

By Case (e) of \ref{x20} this implies $\Sep_1(\mu,\theta)$, so 
let $\bar f^\circ = \LL f_\eps^\circ : \eps < \mu\RR$ be a witness.
Let $\clF \subseteq {}^\kappa\mu$
be of cardinality $\mu$ witnessing $\bfU_{\!\kappa}'(\mu) = \mu$.

The rest is as in the proof of Case (a), except that in the end we choose $u \in [\kappa]^\kappa$ 
and $\eps < \mu$ {together} such that
$$
(\forall i \in u)\big[\nu_\eps(i) = \eps_i \big]
$$
(which is possible by our choice of $\clF$).

\mn
\textbf{Case (c):} Like Case (b).

\mn
\textbf{Case (d):} Similarly, using \ref{x20}(c).
\end{PROOF}

\mn
\begin{claim}\label{x35}
$1)$ $\mu = \mu^{\kappa+\theta}$ implies $\Sep_2(\mu,\mu,2^\mu,\theta,\kappa,\{\kappa\})$.

\sn
$2)$ Suppose $\bfU_\kappa(\mu) = \kappa$ and
$\bfU_{[\sigma]^{<\theta}}(\mu) = \mu$ for some $\sigma \geq \theta$ with $\sigma^\theta \leq \mu$ and $(2^\theta)^+ < 2^\mu$.

\underline{Then} we have $\Sep_2(\mu,\mu,2^\mu,\theta,\kappa,[\kappa]^\kappa)$.
\end{claim}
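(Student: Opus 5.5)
The plan is to reduce both parts to a $\Sep_1$ witness, following the pattern of the proof of \ref{x32}. First I fix a witness $\bar f^\circ = \LL f^\circ_\eps : \eps < \mu\RR$ for $\Sep_1(\mu,\mu,2^\mu,\theta,\Upsilon)$ with $\Upsilon$ regular and $\Upsilon \le 2^\mu$. Then I re-index it by a family $\clF = \{\nu_\eps : \eps < \mu\} \subseteq {}^\kappa\mu$, setting
$$
f_{\eps,i} \defeq f^\circ_{\nu_\eps(i)}.
$$
For part (1) the family is $\clF \defeq {}^\kappa\mu$, which has cardinality $\mu$ because $\mu = \mu^\kappa$. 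For part (2) it is a family of size $\mu$ witnessing $\bfU'_\kappa(\mu) = \mu$. I read the hypothesis ``$\bfU_\kappa(\mu) = \kappa$'' as the intended $\bfU_\kappa(\mu) = \mu$, and pass to $\bfU'_\kappa(\mu)$ by \ref{x10}(1).

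The source of $\bar f^\circ$ differs between the parts. In part (1), $\mu = \mu^\theta$, so \ref{x20}(a) gives $\Sep_1$ with $\Upsilon = \theta$; enlarging to $\Upsilon = \theta^+ \le \mu$ makes it regular (cf.\ \ref{x29}(5)). In part (2), \ref{x20}(c) gives $\Sep_1$ with $\Upsilon = (2^{<\sigma})^+$, using $\bfU_{[\sigma]^{<\theta}}(\mu) = \mu$ and $\sigma^\theta \le \mu$. I expect the cardinal bound $(2^\theta)^+ < 2^\mu$ to be what guarantees $\Upsilon \le 2^\mu$ here, possibly via the refined bound $\sum_{i<\sigma}(2^{|i|})^+$ noted in Case 3 of \ref{x20}.

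The main construction follows Case (a) of \ref{x32}. Let $\cP_i \subseteq {}^\mu(2^\mu)$ have cardinality $< 2^\mu$ for each $i < \kappa$.
\begin{enumerate}[$(\alpha)$]
\item For each $i$, the set $\Lambda_i \defeq \bigcup_{\nu \in \cP_i} \Sol_\nu$ is a union of fewer than $2^\mu$ sets, each of size $< \Upsilon \le 2^\mu$. As $\Upsilon$ is regular, $|\Lambda_i| < 2^\mu$, so I pick $\varrho^\circ_i \notin \Lambda_i$.
\item Given $\bar\nu \in \prod_i \cP_i$, for each $i$ there is $\eps_i < \mu$ with $f^\circ_{\eps_i}(\nu_i) = \varrho^\circ_i(\eps_i)$.
\item I set $\varrho_i(\eps) \defeq \varrho^\circ_i(\nu_\eps(i))$, so that $\varrho_i \in {}^\mu\theta$.
\end{enumerate}
Unlike in the written Case (a), this last re-indexing of $\varrho$ is needed, since the required equality is $f_{\eps,i}(\nu_i) = \varrho_i(\eps)$ and not $\varrho_i(\eps_i)$.

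It remains to find $\eps$ and $u$ for the given $\bar\nu$. In part (1), the sequence $\LL \eps_i : i < \kappa\RR$ lies in ${}^\kappa\mu = \clF$, so it equals $\nu_\eps$ for some $\eps$. Then $f_{\eps,i}(\nu_i) = f^\circ_{\eps_i}(\nu_i) = \varrho^\circ_i(\eps_i) = \varrho_i(\eps)$ for every $i < \kappa$. So $u = \kappa$ works, which is exactly what the filter $\{\kappa\}$ demands. In part (2), the choice of $\clF$ gives $\eps$ with $\nu_\eps(i) = \eps_i$ for $\kappa$-many $i$, and $u$ is that set, which lies in $[\kappa]^\kappa$.

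I expect the main obstacle to be the bookkeeping around $\Upsilon$: showing it is regular and $\le 2^\mu$ from the stated hypotheses, especially in part (2), so that the union bound in step $(\alpha)$ goes through. The combinatorics of re-indexing is routine.
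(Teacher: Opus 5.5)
You follow the same route as the paper, whose proof of this claim is just ``like Case (a) of \ref{x32}, using $\clF={}^\kappa\mu$.'' You also rightly notice that Case (a) ends with $\varrho_i(\eps_i)$ where $\varrho_i(\eps)$ is required. However, the step $(\alpha)\Rightarrow(\beta)$, which you copied from that Case (a), fails. By Definition \ref{x8}, $\Sol_\varrho$ is indexed by a color sequence $\varrho\in{}^\mu\theta$ and is a set of points of ${}^\mu(2^\mu)$. So $\bigcup_{\nu\in\cP_i}\Sol_\nu$ with $\nu\in{}^\mu(2^\mu)$ is not meaningful. What $(\beta)$ actually requires is a single $\varrho^\circ_i\in{}^\mu\theta$ with $\Sol_{\varrho^\circ_i}\cap\cP_i=\varnothing$. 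The bound $|\Sol_\varrho|<\Upsilon$ does not yield such a $\varrho^\circ_i$ by counting. A fixed point $\nu$ belongs to $\Sol_\varrho$ for every $\varrho\in\prod_{\eps<\mu}(\theta\setminus\{f^\circ_\eps(\nu)\})$, i.e.\ for $2^\mu$ many $\varrho$ once $\theta\geq 3$. (In this literal direction of Definition \ref{x17} the conclusion is actually trivial: let $f_{0,i}$ and $\varrho_i$ be constantly $0$. This signals that the literal reading is not the intended one.)

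The direction that $\Sep_1$ really supports is the reverse one, and it is also the one \ref{a8} uses: there $c_{\eps,\delta,i}=f_{\eps,i}(\varrho^\delta_i)$, and $\cP_\xi$ consists of color sequences $\rho_{\beta,F}\in{}^\mu\theta$. In that direction, for $\cP_i\subseteq{}^\mu\theta$ of size $<2^\mu$ you pick a \emph{point} $\varrho_i\in{}^\mu(2^\mu)\setminus\bigcup_{\nu\in\cP_i}\Sol_\nu$. Then each $\nu_i\in\cP_i$ has some $\eps_i$ with $f^\circ_{\eps_i}(\varrho_i)=\nu_i(\eps_i)$, and your counting in $(\alpha)$ is correct. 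But your re-indexing repair is no longer available. Taking $\eps$ with $\nu_\eps(i)=\eps_i$ for $i\in u$ gives $f_{\eps,i}(\varrho_i)=\nu_i(\eps_i)$, whereas $\nu_i(\eps)$ is needed. Since $\nu_i$ is the given object rather than the chosen one, it cannot be re-indexed. So the mismatch you found is a real obstruction. Neither your patch nor the cited Case (a) produces a single $\eps$ that works for $\clD$-many $i$ in the version that is actually used.

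Separately, the $\Upsilon$ bookkeeping in part (2) is left open. The hypothesis $(2^\theta)^+<2^\mu$ does not give $(2^{<\sigma})^+\leq 2^\mu$ when $\sigma>\theta$. Case (d) of \ref{x32}, which the paper cites, assumes $(2^\sigma)^+<2^\mu$ instead.
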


\begin{PROOF}{\ref{x35}}
1) Like the proof of Case (a) of \ref{x32}, using $\clF \defeq {}^\kappa\mu$.

\sn
2) Use \ref{x32} Case (d) and the proof of Case (a).
\end{PROOF}

\mn
\begin{definition}\label{x38}
Assume $\lambda,\chi,\mu,\partial,\theta,\kappa$ are cardinals (with $\partial$ and $\theta$ possibly finite), and $\clD \in \up(\kappa)$.

\mn
1) Let $\Sep_3(\lambda;\chi,\mu,\partial,\theta,\kappa,\clD)$ mean that there exist 
$$
\bar f = \LL f_{\eps,i} : \eps < \chi,\ i < \kappa\RR
$$ 
and $\olsi\cP = \LL \cP_\xi : \xi < \lambda\RR$ such that the following all hold.
\begin{enumerate}
   \item $f_{\eps,i} : {}^\mu\partial \to \theta$
\sn
    \item $\cP_\xi \subseteq {}^\mu\partial$, and $\olsi \cP$ is strictly $\subset$-increasing in $\xi$ with union ${}^\mu\partial$.
\sn
    \item If $\xi < \lambda$ then we can
    we can find a sequence
    $\bar\varrho = \LL \varrho_i : i < \kappa\RR$ such that:
    \begin{enumerate}
        \item $\varrho_i \in {}^\mu\theta$
\sn
        \item If $\bar\nu = \LL \nu_i : i < \kappa\RR \in {}^\kappa(\cP_\xi)$ 
        \underline{then} there exist $\eps < \chi$ 
        and $u \in \clD$ such that
$$
        i \in u \Rightarrow f_{\eps,i}(\nu_i) = \varrho_i(\eps).
$$
    \end{enumerate}
\end{enumerate}

\sn
2) $\Sep_4(\lambda;\chi,\mu,\partial,\theta,\kappa,\clD)$ is defined similarly, except that $\olsi \cP$ is redefined as a covering of ${}^\kappa({}^\mu\partial)$ 
(so each $\cP_\xi \subset {}^\kappa({}^\mu\partial)$). Clause (C) remains the same, except that the antecedent to (C)(b) becomes ``If $\bar\nu = \LL \nu_i : i < \kappa\RR \in \cP_\xi$ . . .' for consistency.

\sn
3) {Again, the default value of $\clD$ is $[\kappa]^\kappa$.} 
For $\iota = 3,4$,
$\Sep_\iota(\lambda;\mu,\theta,\kappa)$ will mean $\Sep_\iota(\lambda;\mu,\mu,\theta,\theta,\kappa)$. (That is, $\chi \defeq \mu$ and $\partial \defeq \theta$.)
\end{definition}

\mn
\begin{observation}\label{x41}
$1)$ \emph{\textbf{[Monotonicity:]}} If $\chi_1 \leq \chi_2$, $\mu_1 \leq \mu_2$, 
$\partial_1 \geq \partial_2$, $\theta_1 \geq \theta_2$, and $\clD_1 \supseteq \clD_2$, \underline{then}
$$
\Sep_3(\lambda;\chi_1,\mu_1,\partial_1,\theta_1,\kappa,\clD_1) \Rightarrow \Sep_3(\lambda;\chi_2,\mu_2,\partial_2,\theta_2,\kappa,\clD_2).
$$

\sn
\emph{1A)} Similarly for $\Sep_4$.

\sn
\emph{1B)} Similarly for $\Sep_1$; i.e.\ if $\chi_1 \leq \chi_2$, $\mu_1 \leq \mu_2$, 
$\partial_1 \geq \partial_2$, $\theta_1 \geq \theta_2$, and $\Upsilon_1 \leq \Upsilon_2 \leq 2^{\mu_1}$,\footnote{
    Again, with equality only if $2^{\mu_1}$ is regular.
} \underline{then}
$$
\Sep_1(\chi_1,\mu_1,\partial_1,\theta_1,\Upsilon_1) \Rightarrow \Sep_1(\chi_2,\mu_2,\partial_2,\theta_2,\Upsilon_2).
$$

\sn
$2)$ \emph{\textbf{[Connection to $\Sep_2$:]}} Assume $\chi,\mu,\partial,\theta,\clD$ are as in \emph{\ref{x38}}, and $\cf(\partial^\mu) > \kappa$.\footnote{
    This is needed in the second $\Rightarrow$.
}
\underline{Then}
$$
\Sep_2(\chi,\mu,\partial,\theta,\kappa,\clD) \Rightarrow \Sep_3(\cf(\partial^\mu);\chi,\mu,\partial,\theta,\kappa,\clD) \Rightarrow \Sep_4(\cf(\partial^\mu);\chi,\mu,\partial,\theta,\kappa,\clD)
$$
\end{observation}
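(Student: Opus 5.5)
Part (1) is pure bookkeeping. Fix a witness $(\bar f,\olsi\cP)$ for $\Sep_3(\lambda;\chi_1,\mu_1,\partial_1,\theta_1,\kappa,\clD_1)$. We transport it along the four parameter changes one at a time, since each change is independent of the others.

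\begin{itemize}
\item For $\chi_1\le\chi_2$: pad the sequence with dummy functions $f_{\eps,i}$ for $\eps\in[\chi_1,\chi_2)$.
\item For $\theta_2\le\theta_1$: compose each $f_{\eps,i}$ with the retraction $\theta_1\to\theta_2$ that is the identity below $\theta_2$ and sends everything else to $0$. Do the same coordinatewise to the $\varrho_i$.
\item For $\partial_2\le\partial_1$ and $\mu_1\le\mu_2$: pull back along the map ${}^{\mu_2}\partial_2\to{}^{\mu_1}\partial_1$, $\nu\mapsto\nu\rest\mu_1$ (viewing $\partial_2\subseteq\partial_1$). Set $\cP'_\xi$ to be the preimage of $\cP_\xi$ and $f'_{\eps,i}(\nu)\defeq f_{\eps,i}(\nu\rest\mu_1)$.
\item For $\clD_1\subseteq\clD_2$: the same $u$ still lies in $\clD_2$.
\end{itemize}

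The one point needing care is clause (B), which asks that $\olsi\cP'$ be \emph{strictly} increasing with union ${}^{\mu_2}\partial_2$. Preimages keep the union and monotonicity, but strictness can fail when restriction to $\partial_2$ is not onto. I would repair this by intersecting with ${}^{\mu_1}\partial_2$, then thinning to a cofinal subsequence on which the sets increase strictly. Regularity of $\lambda$, or at least the fact that the union is everything, makes this harmless, since (C) for a larger $\xi$ implies (C) for a smaller one. If the sequence becomes eventually constant, I would absorb this by fattening the $\cP'_\xi$ in the trivial direction instead.

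Part (1A) is the same argument with sequences of length $\kappa$. Part (1B) is the analogous argument for $\Sol$ sets: restriction and composition can only shrink $\Sol_\varrho$ (pull $\varrho$ back suitably), and enlarging $\Upsilon$ weakens the requirement.

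For part (2), first implication, let $\bar f$ witness $\Sep_2$ and set $\lambda\defeq\cf(\partial^\mu)$. Choose a strictly increasing continuous sequence $\LL\cP_\xi:\xi<\lambda\RR$ of subsets of ${}^\mu\partial$, each of cardinality $<\partial^\mu$, with union ${}^\mu\partial$. This is possible by enumerating ${}^\mu\partial$ and cutting at a cofinal sequence of length $\lambda$. For each $\xi$, apply \ref{x17}(1)(C) to the constant sequence $\cP_i\defeq\cP_\xi$ to obtain $\bar\varrho$. Then every $\bar\nu\in{}^\kappa(\cP_\xi)$ lies in $\prod_i\cP_i$, so (C) of \ref{x38}(1) holds.

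For the second implication, $\Sep_3\Rightarrow\Sep_4$, set $\cP^4_\xi\defeq{}^\kappa(\cP_\xi)$; clause (C) transfers verbatim. What must be checked is that these sets cover ${}^\kappa({}^\mu\partial)$, i.e.\ that every $\bar\nu$ of length $\kappa$ has all its coordinates in a single $\cP_\xi$. This is exactly where $\cf(\partial^\mu)>\kappa$ enters. The $\kappa$ indices $\xi_i$ with $\nu_i\in\cP_{\xi_i}$ are bounded below $\lambda$, so taking $\xi\defeq\sup_i\xi_i<\lambda$ and using monotonicity, we get $\bar\nu\in\cP^4_\xi$.

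I expect the main obstacle to be the strictness clause in (1), rather than anything in (2).
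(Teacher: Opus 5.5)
Your part (2) is the paper's argument: cut an enumeration of ${}^\mu\partial$ at a cofinal $\lambda$-sequence and apply \ref{x17} to a constant sequence, then pass to ${}^\kappa(\cP_\xi)$, using $\cf(\partial^\mu)>\kappa$ for the covering. In part (1) your $\chi$-, $\mu$- and $\theta$-transports are also fine. The gaps are in the remaining clauses. The paper's ``read the definition'' glosses over these too, and as printed they are actually false.

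\textbf{The $\partial$-clause of (1).} Clause (B) of \ref{x38} alone forces $|{}^{\mu_2}\partial_2|\ge\lambda$ (pick $x_\xi\in\cP_{\xi+1}\setminus\cP_\xi$). So your ``fattening'' in the eventually-constant case is impossible when $\partial_2^{\mu_2}<\lambda$, and the hypothesis does not rule this out. With all $f_{\eps,i}\equiv0$ and $\varrho_i\equiv0$, clause (C) holds for every $\cP_\xi$ (take $\eps=0$, $u=\kappa$). Hence the printed $\Sep_3(\lambda;\chi,\mu,\partial,\theta,\kappa,\clD)$ reduces to $\partial^\mu\ge\lambda$. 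For $\lambda=(2^{\aleph_0})^+$ it therefore holds with $(\mu,\partial)=(\aleph_0,\lambda)$ and fails with $(\mu,\partial)=(\aleph_0,2)$; the same applies to (1A). This degeneracy also shows the printed definition is not the one used in the proof of \ref{a8}, which needs $f_{\eps,i}(\varrho_i)=\nu_i(\eps)$ with $\nu_i\in\cP_\xi\subseteq{}^\mu\theta$. You must add the hypothesis $\partial_2^{\mu_2}\ge\lambda$; under it your repair works for regular $\lambda$.

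\textbf{The $\clD$-clause of (1).} You silently replaced the printed hypothesis $\clD_1\supseteq\clD_2$ by $\clD_1\subseteq\clD_2$. Yours is the direction in which ``the same $u$ still lies in $\clD_2$'' holds, and it matches Remark \ref{a3}(3), where $\clD=\{\kappa\}$ is the strongest case. Under the printed inclusion that step is false, so say explicitly that you are correcting the statement.

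\textbf{The $\mu$-clause of (1B).} Your claim that restriction ``can only shrink $\Sol_\varrho$'' is wrong here. With $f'_\eps(\nu)\defeq f_\eps(\nu\rest\mu_1)$, the set $\Sol'_\varrho$ is the full preimage of $\Sol_\varrho$. Each member of $\Sol_\varrho\cap{}^{\mu_1}\partial_2$ contributes $\partial_2^{|\mu_2\setminus\mu_1|}$ members of $\Sol'_\varrho$. No cleverer witness exists in general. If $\theta\ge2$ and $\bar f$ witnesses $\Sep_1(\chi,\mu,\partial,\theta,\Upsilon)$, each fibre $\{\nu:\LL f_\eps(\nu):\eps<\chi\RR=\sigma\}$ lies in $\Sol_\varrho$ for any $\varrho$ disagreeing with $\sigma$ everywhere. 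Hence $\partial^\mu\le\theta^\chi\cdot\Upsilon$. Now $f_n(\nu)\defeq\nu(n)$ witnesses $\Sep_1(\aleph_0,\aleph_0,2,2,2)$, since each $\Sol_\varrho=\{1-\varrho\}$. But if $2^{\aleph_0}<2^{\aleph_1}$, then $\Sep_1(\aleph_0,\aleph_1,2,2,\Upsilon_2)$ fails for every $\Upsilon_2\le 2^{\aleph_0}$.

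For $\Sep_1$ the monotone direction is $\mu_1\ge\mu_2$: extending $\nu\in{}^{\mu_2}\partial_2$ by zeros injects $\Sol'_\varrho$ into $\Sol_\varrho$. Your $\chi$-, $\partial$-, $\theta$- and $\Upsilon$-arguments for (1B) are correct.
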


\begin{PROOF}{\ref{x41}}
1) Read the definition. 
(1A) and (1B) are similar.

\sn
2) \textbf{The first implication:}

Let $\LL \eta_\alpha : \alpha < \partial^\mu\RR$ list ${}^\mu\partial$, let $\LL \Upsilon_\zeta : \zeta < \lambda\RR$ be an increasing sequence of ordinals with limit $\partial^\mu$, and let $\cP_\zeta \defeq \{\eta_\alpha : \alpha < \Upsilon_\zeta\}$.

Now check.

\mn
\textbf{The second implication:}

Let $\big\LL \cP_\xi : \xi < \cf(\partial^\mu) \big\RR$ exemplify 
$$
\Sep_3(\cf(\partial^\mu);\chi,\mu,\partial,\theta,\kappa,\clD)
$$
(that is, it is as in Definition \ref{x38}(1).) Now $\big\LL {}^\kappa (\cP_\xi) : \xi < \cf(\partial^\mu) \big\RR$ is as required in \ref{x38}(2). (Note that here we need the assumption `$\cf(\partial^\mu) > \kappa$.')
\end{PROOF}

\mn
\begin{conclusion}\label{x47}
We have $\Sep_3(\lambda;\mu,\mu,2^\mu,\theta,\kappa,\clD)$ \underline{when}
\begin{enumerate}[$(A)$]
    \item $\kappa = \cf(\kappa)$
\sn
    \item The triple $(\mu,2^\mu,\theta)$ satisfies at least one of the conditions in \emph{\ref{x20}}.
\sn
    \item $\bfU_\clD(\mu) = \mu$ 
\sn
    \item $\lambda \defeq \cf(2^\mu)$. 
\end{enumerate}
\end{conclusion}

\begin{PROOF}{\ref{x47}}
By the conclusion of \ref{x20}, we have $\Sep_1(\mu,\mu,2^\mu,\theta,\Upsilon)$ for some $\Upsilon$. 
Using clause $(C)$ and repeating the proof of \ref{x32}(a), we have $\Sep_2(\mu,\mu,2^\mu,\theta,\kappa,\clD)$.
With that and \ref{x41}(2), we get our conclusion.
\end{PROOF}

-

\mn
\begin{remark}\label{x50}
1) We may need to sort out when `$\chi \ne \mu$' is actually needed in $\Sep_\iota$.

\sn
2) We may also consider the definition below.
\end{remark}

\mn\begin{definition}\label{x53}
Let $\kappa \leq \mu$ and $\bbS \subseteq \clP(\clP(\kappa))$.

\sn
1) We define $\bfU_\bbS(\mu)$ to be 
$$
\min\!\big\{|\clU| : \clU \subseteq [\mu]^\kappa,\ (\forall g \in {}^\kappa\!\mu)(\exists \clA \in \bbS)(\forall u \in \clA)\big[ \rang(g \rest u) \in \clU\big]\big\}.
$$

\sn
2) $\bfU_\bbS'(\mu)$ means
$$
\min\!\big\{ |\clF| : \clF \subseteq {}^\kappa\!\mu \text{ and } (\forall g \in {}^\kappa\!\mu)(\exists \clA \in \bbS)(\forall u \in \clA)(\exists f \in \clF)[f \rest u = g\rest u]  \big\}.
$$
\end{definition}

\mn
\begin{question}\label{x60}
Can we prove existence results for $\Sep_\iota(\lambda;\mu,\theta,\kappa)$ for ($\iota = 3,4$ and) $\lambda \in (\mu,2^\mu] \setminus \{\cf(2^\mu)\}$ regular? Can we disprove them?
\end{question}

\mn
Well, as in many cases, we have independence results. E.g.
\begin{claim}\label{x63}
$1)$ Assume $\mu = \mu^{<\mu} < \chi = \chi^\mu$. Let $\bbP \defeq \Cohen_{\mu,\chi}$ (i.e.\ the forcing adding $\chi$-many $\mu$-Cohens). 

\underline{Then} in $\bfV^\bbP$, for ever regular $\lambda \in [\mu^+,\chi] \setminus \{\cf(\chi)\}$, we have $\Sep_3(\lambda; \mu,\theta,\kappa)$.

\sn
$2)$ If $\theta \leq \mu < \lambda = \cf(\lambda) \leq 2^\mu$ and $\bar\eta = \LL\eta_\xi : \xi < \lambda\RR \subseteq {}^\mu\theta$ is a $\mu$-Luzin sequence, \underline{then} $\Sep_3(\lambda;\mu,\mu,2^\mu,\theta,\kappa,\{\kappa\})$.

Recall that `$\bar\eta$ is a $\mu$-Luzin sequence' means that for every meagre $\mu$-Borel set $\bfB \subseteq {}^\mu\theta$ there exists $\alpha_* < \lh(\bar\eta)$ such that
$$
\alpha_* < \alpha < \lh(\bar\eta) \Rightarrow \eta_\alpha \notin \bfB.
$$
\end{claim}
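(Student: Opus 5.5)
I would prove part (2) first and then derive part (1) from it by showing that the Cohen extension contains $\mu$-Luzin sequences of every regular length $\lambda\in[\mu^+,\chi]$. Throughout I take $\clD=\{\kappa\}$, so one $\eps$ has to work for all $i<\kappa$ simultaneously, as in \ref{x35}(1).

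\textbf{Part (2).} The plan is to build $\bar f$, $\olsi\cP$ and the witnesses $\bar\varrho$ out of $\bar\eta$ as follows.
\begin{enumerate}[(i)]
    \item Fix a coding $\pi$ under which an element of ${}^\mu\theta$ is read as a $\kappa$-sequence $\LL \pi(\eta)_i : i<\kappa\RR$ of elements of ${}^\mu\theta$, for instance by splitting $\mu$ into $\kappa$ pieces of size $\mu$ (this uses $\kappa\le\mu$). The map $\pi$ is a homeomorphism of ${}^\mu\theta$ onto $({}^\mu\theta)^\kappa$ in the ${<}\mu$-box topologies, so it sends meagre $\mu$-Borel sets to meagre $\mu$-Borel sets.
    \item For each $\nu\in{}^\mu(2^\mu)$ choose $g_\nu\in{}^\mu\theta$ and put $f_{\eps,i}(\nu)\defeq g_\nu(\eps)$.
    \item Given $\xi$, take the witness $\varrho_i\defeq\pi(\eta_\alpha)_i$ for a suitably large $\alpha<\lambda$.
\end{enumerate}
For a fixed $\bar\nu=\LL\nu_i:i<\kappa\RR$, the bad set
$$
\bfB_{\bar\nu}\defeq\big\{\eta\in{}^\mu\theta : (\forall\eps<\mu)(\exists i<\kappa)\big[\pi(\eta)_i(\eps)\neq g_{\nu_i}(\eps)\big]\big\}
$$
is closed nowhere dense. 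Indeed, any basic open set can be extended to fix, at one fresh coordinate $\eps$, all $\kappa$ components to agree with the $g_{\nu_i}$. For this I need $\kappa<\mu$, or else I must pass to a larger set of coordinates. The Luzin property then gives an $\alpha_*$ such that $\eta_\alpha\notin\bfB_{\bar\nu}$ for all $\alpha>\alpha_*$, and any such $\eta_\alpha$ yields the required $\eps$.

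\textbf{Main obstacle.} The condition in \ref{x38}(1)(C) quantifies over all $\bar\nu\in{}^\kappa(\cP_\xi)$, but the Luzin property controls only one meagre Borel set at a time. So $\cP_\xi$ and the map $\nu\mapsto g_\nu$ must be chosen so that
$$
\bigcup\big\{\bfB_{\bar\nu}:\bar\nu\in{}^\kappa(\cP_\xi)\big\}
$$
is covered by $<\lambda$ many meagre $\mu$-Borel sets. Since $\lambda$ is regular, the sup of the corresponding $\alpha_*$'s is then still $<\lambda$. My first attempt would be the following.
\begin{itemize}
    \item Write ${}^\mu(2^\mu)$ as an increasing union $\bigcup_{\xi<\lambda}\cP_\xi$, and let $g$ restricted to $\cP_\xi$ take only $<\lambda$ many values, with the values chosen from a fixed set of codes.
    \item Alternatively, make $g_\nu$ depend on $\nu$ only through a ${<}\lambda$-indexed piece of information, so that $\{g_\nu:\nu\in\cP_\xi\}$ has size $<\lambda$.
\end{itemize}
Either way, $\{\bar g_{\bar\nu} : \bar\nu\in{}^\kappa(\cP_\xi)\}$ must also have size $<\lambda$, which may need $\lambda^{\kappa}$-type bounds or $\cf(\lambda)>\kappa$. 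I also need the $\cP_\xi$ to be strictly increasing.

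I expect this bookkeeping to be the hardest step, and possibly to require a more clever choice of $g$. A candidate is $g_\nu(\eps)\defeq\nu(\eps)$ reduced modulo a partition of $2^\mu$ into $\theta$ classes, indexed along the filtration.

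\textbf{Part (1).} In $\bfV^{\bbP}$ we have $2^\mu=\chi$ and $\mu^{<\mu}=\mu$. Take the first $\lambda$ Cohen generics $\eta_\alpha$, $\alpha<\lambda$.
\begin{itemize}
    \item By the $\mu^+$-c.c.\ and ${<}\mu$-closure of $\bbP$, every $\mu$-Borel code in $\bfV^{\bbP}$ lies in an intermediate extension by $\le\mu$ many coordinates $w$.
    \item For $\alpha\notin w$, the generic $\eta_\alpha$ is Cohen over that intermediate model, so it avoids every meagre Borel set coded there.
    \item Since $\lambda>\mu$ is regular, all $\alpha$ above $\sup(w\cap\lambda)<\lambda$ work, so $\LL\eta_\alpha:\alpha<\lambda\RR$ is $\mu$-Luzin.
\end{itemize}
Part (2) then gives $\Sep_3(\lambda;\mu,\theta,\kappa)$ after applying the monotonicity in \ref{x41}(1) to pass from $\partial=2^\mu$ to $\partial=\theta$. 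The exclusion of $\cf(\chi)$ is harmless here: that case is already covered by \ref{x47}. The remaining regular $\lambda$ are exactly those for which no $\mathsf{ZFC}$ argument via \ref{x41}(2) is available.
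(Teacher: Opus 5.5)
\textbf{There is a genuine gap: the central step of part (2) is never carried out.} That step is to choose the $\cP_\xi$ and the map $\nu\mapsto g_\nu$ so that a single $\eta_\alpha$ avoids $\bfB_{\bar\nu}$ for every $\bar\nu\in{}^\kappa(\cP_\xi)$. Since you derive part (1) from part (2), neither part is proved as it stands.

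Under the literal reading of Definition \ref{x38} that you adopted, this obstacle is illusory. There $f_{\eps,i}$ is applied to $\nu_i\in\cP_\xi$ and compared with $\varrho_i(\eps)$. A constant $g$ makes all the $\bfB_{\bar\nu}$ one and the same nowhere dense set. In fact $f_{\eps,i}\equiv 0$ and $\varrho_i\equiv 0$ satisfy clause (C) outright, with $\eps=0$, $u=\kappa$, and any strictly increasing $\lambda$-chain $\cP_\xi$. So on that reading no Luzin sequence is needed at all. That triviality signals that the typed definition has the roles of $\nu$ and $\varrho$ interchanged.

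\textbf{Under the reading the paper actually needs, the difficulty is real and your approach does not address it.} In Lemma \ref{a8}, $\Sep_3$ is used with $c_{\eps,\delta,i}\defeq f_{\eps,i}(\varrho^\delta_i)$, and $\cP_\xi\subseteq{}^\mu\theta$ collects the target sequences $\rho_{\beta,F}$. So clause (C) must give $f_{\eps,i}(\varrho_i)=\nu_i(\eps)$ for all $i<\kappa$, for arbitrary $\bar\nu\in{}^\kappa(\cP_\xi)$.
\begin{itemize}
\item There is now no $g$ available to shrink the family. You must avoid every set $\{\eta:(\forall\eps)(\exists i)[\pi(\eta)_i(\eps)\neq\nu_i(\eps)]\}$ with $\bar\nu\in{}^\kappa(\cP_\xi)$.
\item The $\cP_\xi$ must exhaust ${}^\mu\theta$, so when $\lambda<2^\mu$ we get $|\cP_\xi|\ge\lambda$ for all large $\xi$. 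The Luzin property only handles fewer than $\lambda$ meagre sets at once, and you give no way to reduce to that many.
\item Defining $\cP_\xi$ through Luzin bounds of the individual $\nu$'s does not help either. Genericity of each $\pi(\eta)_i$ over $\nu_i$ separately need not produce a common coordinate $\eps$ for all $i$.
\end{itemize}

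\textbf{The missing ingredient} is a family over which one real is generic for all $\kappa$-tuples simultaneously. In the Cohen model, intermediate extensions supply exactly this, and they give (1) directly. Write $G$ for the generic and set $\cP_\xi\defeq{}^\mu\theta\cap\bfV[G\rest((\chi\setminus\lambda)\cup\xi)]$; take $\bar\varrho^\xi$ from $\pi$ of the $\xi$-th generic.
\begin{itemize}
\item Every name for a member of ${}^\mu\theta$ has support of size $\le\mu$, by the $\mu^+$-c.c. Since $\lambda>\mu$ is regular, that support meets $\lambda$ in a bounded set, so $\bigcup_\xi\cP_\xi={}^\mu\theta$.
\item The $\xi$-th generic lies in $\cP_{\xi+1}\setminus\cP_\xi$, so the chain is strictly increasing.
\item The quotient forcing on the coordinates in $[\xi,\lambda)$ is $({<}\mu)$-closed. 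Hence, for $\kappa<\mu$, every $\bar\nu\in{}^\kappa(\cP_\xi)$ already lies in that intermediate model.
\item The $\xi$-th generic is Cohen over that model, so it avoids the closed nowhere dense set coded by $\bar\nu$ there.
\end{itemize}

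For (2) with an abstract Luzin sequence you would need a substitute for these intermediate models, and your proposal does not supply one.
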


\begin{PROOF}{\ref{x63}}
Straightforward.
\end{PROOF}

\mn
\begin{claim}\label{x66}
Let $\mu = \mu^{<\mu} < \lambda = \cf(\lambda) < \chi \leq 2^\mu$ and assume (e.g.) the forcing axiom $\mathbf{Ax}_{\mu,\chi}((1)_c^+,(2)_c^\eps)$ from \cite{Sh:1036}. Let $S \defeq \{\delta \in S_\kappa^\lambda : \mu^\kappa \divides \delta\}$.\footnote{
    Here we mean $\mu^\kappa$ as exponentiation of ordinals.
}

\underline{Then}
\begin{enumerate}[$(A)$]
    \item For $\kappa = \cf(\kappa) < \mu$, there exists $\olsi C = \LL C_\delta : \delta \in S\RR$ such that:
    \begin{enumerate}[$(a)$]
        \item $C_\delta$ is an unbounded subset of $\delta$ of order type $\kappa$.
\sn
        \item $\olsi C$ is $\mu^+$-free (see \emph{\ref{yfree}}).  
    \end{enumerate}
    (I.e.\ if $u \in \big[{S_\kappa^\lambda} \big]^{<\mu}$ then there exists $f \in \prod\limits_{\delta \in u} C_\delta$ such that $\LL C_\delta \setminus f(\delta) : \delta \in u\RR$ is a sequence of pairwise disjoint sets.)
\sn
    \item Any $\olsi C$ as in part $(A)$ has $\mu$-\emph{uniformization}.

    That is, if $\bar f = \LL f_\delta : \delta \in {S_\kappa^\lambda}\RR$ with 
    $f_\delta \in {}^{C_\delta}\mu$
    \underline{then} there exists 
    $f \in {}^\lambda\mu$ such that 
    $$
    (\forall \delta \in {S_\kappa^\lambda}) \big[ f_\delta \subseteq^* f \big].
    $$
    (By \emph{this} we mean $\big|\{ \alpha \in C_\delta : f_\delta(\alpha) \neq f(\alpha)\} \big| < \kappa$.)
\end{enumerate}
\end{claim}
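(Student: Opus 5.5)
The plan is to obtain both clauses from the forcing axiom $\mathbf{Ax}_{\mu,\chi}((1)_c^+,(2)_c^\eps)$. I will treat it as a black box: for any forcing notion satisfying the chain condition $(1)_c^+$ and the strategic-completeness condition $(2)_c^\eps$, and any fewer than $\chi$ dense sets (here $\lambda<\chi$ of them), there is a filter meeting them all. So each clause reduces to three tasks:
\begin{itemize}
    \item design a forcing of size ${\le}\lambda$ whose conditions are small approximations of size ${<}\mu$;
    \item verify the two conditions;
    \item check that meeting $\lambda$ natural dense sets gives the required object.
\end{itemize}
The hypothesis $\mu=\mu^{<\mu}$ will be used throughout, both for $\Delta$-system arguments and for counting isomorphism types of conditions.

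For clause (B), which I will do first, fix $\olsi C$ as in (A) and $\bar f=\LL f_\delta : \delta \in S\RR$. Let $\bbQ$ consist of pairs $p=(g_p,w_p)$ with $w_p\in[S]^{<\mu}$ and $g_p$ a partial function from $\lambda$ to $\mu$ with $|\dom(g_p)|<\mu$. The promise is that for $\delta\in w_p$, the function $g_p$ agrees with $f_\delta$ on $\dom(g_p)\cap C_\delta\setminus\beta^p_\delta$ for some recorded $\beta^p_\delta\in C_\delta$, and the tails $C_\delta\setminus\beta^p_\delta$ for $\delta \in w_p$ are pairwise disjoint. Order is by extension of all coordinates, allowing $\beta^p_\delta$ to increase. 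The three requirements are then checked as follows.
\begin{itemize}
    \item \emph{Density} of ``$\delta\in w_p$'' and ``$\alpha\in\dom(g_p)$'': this uses $\mu^+$-freeness of $\olsi C$. Given $p$ and a new $\delta$, freeness of $w_p\cup\{\delta\}$ gives new disjoint tails. Enlarging $\beta$'s only removes points, so the old promises persist, and we may define $g$ on new points via the unique $f_\delta$ whose tail contains them.
    \item \emph{Strategic ${<}\mu$-completeness} $(2)_c^\eps$: along an increasing sequence of length ${<}\mu$ the $w$'s have union of size ${<}\mu$. We apply freeness to that union to choose the limit $\beta$'s. Here $\mu^+$-freeness (not merely $\mu$-freeness) is what makes unions of size ${\le}\mu$ free.
    \item \emph{Chain condition} $(1)_c^+$: given $\mu^+$ conditions, thin out by the $\Delta$-system lemma (using $\mu^{<\mu}=\mu$) to a set where the domains and $w$'s form a $\Delta$-system with the same isomorphism type over the root. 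Two such conditions are compatible after enlarging $\beta$'s, again by freeness of the union of their $w$'s.
\end{itemize}
Meeting the $\lambda$ dense sets $\{p:\delta\in w_p\}$ and $\{p:\alpha\in\dom(g_p)\}$ gives $f=\bigcup g_p$ with $f_\delta\subseteq^* f$ for every $\delta$.

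For clause (A), I would similarly force $\olsi C$ by approximations $p=\LL C^p_\delta:\delta\in w_p\RR$ with $w_p\in[S]^{<\mu}$. Each $C^p_\delta$ is a full unbounded subset of $\delta$ of order type $\kappa$. We require that every $u\subseteq w_p$ is free, witnessed uniformly by a function $h_p$ as in Definition \ref{yfree} (equivalently, $\{C^p_\delta:\delta\in w_p\}$ is free). Extension is end-extension of $w$. Density of adding $\delta$ requires picking $C_\delta$ almost disjoint from the ${<}\mu$ existing ladders. This is where $\mu^\kappa\divides\delta$ enters: it gives $\delta$ room to choose $C_\delta$ inside a block $[\gamma,\delta)$ avoiding the relevant accumulation points, e.g.\ choosing cofinal points in distinct $\mu$-blocks not yet used.

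The main obstacle is clause (A)'s completeness and chain condition. At limits of length ${<}\mu$ one must keep freeness of the union, which is fine since any ${<}\mu$ many ladders with pairwise finite-like intersections can be handled by induction. The real work is ensuring that the generic $\olsi C$ is $\mu^+$-free for \emph{all} $u$ of size $\mu$, not merely those inside a single condition. I would first try a genericity argument: any $u\in[S]^{\mu}$ in $\bfV$ is covered by the union of a $\mu$-chain of conditions, since $\mu$ many dense sets can be met along a decreasing sequence using strategic completeness. If that fails, the fallback is a direct ZFC construction of a $\mu^+$-free ladder system on $S$ using the ordinal-arithmetic decomposition of $\delta$ in base $\mu$ and the hypothesis $\mu^\kappa\divides\delta$.
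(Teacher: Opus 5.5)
There are genuine gaps in both clauses. In (B) the forcing is designed around the wrong invariant. In (A) the main step is left unproved, and the remedies you propose do not work.

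\textbf{Clause (B).} The density step fails. Since $|C_\delta|=\kappa<\mu$, there are conditions with $w_p=\varnothing$, $\dom(g_p)\supseteq C_\delta$ and $g_p(\alpha)\neq f_\delta(\alpha)$ for every $\alpha\in C_\delta$. No extension of such a $p$ can have $\delta\in w$, so $\{p:\delta\in w_p\}$ is not dense.

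Letting $\beta^p_\delta$ increase does not help; it undermines the conclusion. Raising $\beta$ \emph{shrinks} the promised region, so old promises do not persist. Nothing then prevents $\sup\{\beta^q_\delta : q\in G,\ \delta\in w_q\}=\delta$.

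Even if density were repaired, this must happen for all $\delta\in S$ outside a nonstationary set. Suppose instead the supremum were ${<}\delta$ on a stationary set. Any two such $\delta,\delta'$ lie in a common condition of $G$ with disjoint tails, so the tails above the suprema would be pairwise disjoint. Then ``least point of the tail'' would be an injective regressive function on a stationary set, contradicting Fodor.

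So pairwise disjoint tails is the wrong invariant. What you need instead is:
\begin{itemize}
\item promises that are preserved under extension and are only mutually \emph{consistent}, i.e.\ $f_\delta$ and $f_{\delta'}$ agree where the tails overlap;
\item a closure condition on conditions guaranteeing that a new $\delta$ can still be added.
\end{itemize}
This is where $\mu^+$-freeness must enter, e.g.\ via the fact that for $|X|<\mu$ at most $|X|$ many $\delta$ have $C_\delta\cap X$ cofinal in $C_\delta$. Your chain-condition argument, which again enlarges the $\beta$'s, has the same defect.

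\textbf{Clause (A).} You correctly identify the crux: $\mu^+$-freeness of the generic $\overline{C}$ for families not contained in one condition. Neither remedy you offer establishes it.

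\begin{itemize}
\item \emph{The genericity argument.} A decreasing $\mu$-sequence built in $\mathbf V$ by strategic completeness has no connection to the filter $G$ produced by the axiom. The ladders $C_\delta$, $\delta\in u$, come from different members of $G$, and nothing puts them below one condition of $G$ when $|u|=\mu$. Freeness is not finitary. You also cannot add a dense set for each $u$, since $|[S]^{\mu}|=\lambda^{\mu}\ge 2^\mu\ge\chi$.
\item \emph{The ZFC fallback.} This cannot work in general. For $\mu=\aleph_1$, $\kappa=\aleph_0$, the Fodor-type reflection principle, which is consistent with CH, implies that no ladder system on a stationary subset of $S^\lambda_{\aleph_0}$ is $\aleph_2$-free. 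So the axiom has to be used essentially.
\item \emph{Completeness of your forcing.} Your justification is false as stated. More than $\kappa$ ladders with pairwise bounded intersections need not be free: take a copy, inside $S$, of a ladder system on $S^{\kappa^+}_\kappa$. So once freeness is built into the conditions, upper bounds at limits of length $\ge\kappa^+$ (when $\kappa^+<\mu$) need a real strategy.
\end{itemize}

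\textbf{How the paper's sketch differs.} It fixes in advance an unbounded $C^\circ_\delta\subseteq\delta$ of order type $\mu^\kappa$; this is where $\mu^\kappa\mid\delta$ is used. It then forces with $\langle C_\delta:\delta\in u\rangle$, $u\in[S]^{<\mu}$, $C_\delta\subseteq C^\circ_\delta$ unbounded of order type $\kappa$, ordered by extension. There is no freeness requirement in the conditions, so strategic completeness is immediate; the paper claims it even for length $\mu$. It then invokes the $(2)_b$ form of the $\mu^+$-c.c.\ and declares (B) similar. The paper gives no further detail.
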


\begin{PROOF}{\ref{x66}}
\textbf{Clause (A):} 

Choose $\LL C_\delta^\circ : \delta \in S\RR$ such that:
\begin{enumerate}
    \item [$(*)_\delta$] If $\delta \in S$ (hence $\mu^\kappa \divides \delta$) then $C_\delta^\circ$ is an unbounded subset of $\delta$ of order type $\mu^\kappa$. 
\end{enumerate}

Now we define a forcing notion $\bbQ$.
\begin{enumerate}
    \item[$(*)_\bbQ$]
    \begin{enumerate}
        \item  $p \in \bbQ$ \underline{iff} $p$ is of the form 
        $\LL C_\delta : \delta \in u\RR$, where:
        \begin{enumerate}
            \item $u \in [S]^{<\mu}$
\sn
            \item $C_\delta \subseteq C_\delta^\circ$ is also unbounded in $\delta$.
\sn
            \item $\otp(C_\delta) = \kappa$
        \end{enumerate}
\sn
        \item $\bbQ \models `p \leq q$' \underline{iff} $u_p \subseteq u_q$ and 
        $C_{p,\delta} = C_{q,\delta}$ for all $\delta \in u_p$.
    \end{enumerate}
\end{enumerate}

Clearly $\bbQ$ is strategically $\alpha$-complete for all $\alpha < \mu$ (i.e.\ \cite[0.3(A)(1)$_c$\subref{x2}]{Sh:1036}). Moreover, this still holds for $\alpha = \mu$, and we have version (2)$_b$ of the $\mu^+$-cc (from \cite[0.3(B)\subref{x2}]{Sh:1036}).

\mn
\textbf{Clause (B):} Similarly.
\end{PROOF}

\bn
The following observation will give us some bounds.
\begin{definition}\label{x70}
For $\theta \in \kappa \cap \Reg$, let 
$$
\inv_\sigma(\kappa) \defeq \min\!\big\{ |\clA| : \clA \subseteq [\kappa]^\kappa \text{ and } (\forall h \in {}^\kappa\sigma)(\exists i < \sigma)(\exists A \in \clA)[ \rang(h \rest A) \subseteq i] \big\}.
$$
\end{definition}

\bn
\begin{observation}\label{x73}
$1)$ If $\sigma = \cf(\sigma) < \kappa = \kappa^{<\kappa} < \lambda = \lambda^\kappa$ and $\bbQ$ is the forcing which adds $\kappa$-many $\sigma$-Cohens, then $\Vdash_\bbQ ``\inv_\sigma(\kappa) = \lambda = 2^\kappa$".

\mn
$2) $If $\sigma = \cf(\sigma) \leq \partial = \partial^\kappa$ and $\bbQ$ is a $\lepref{\partial}$-complete forcing notion with {calibre} $\mu$ which does not collapse cardinals, then in $\bfV^\bbQ$ we have $\inv_\sigma(\kappa)^{\bfV^\bbQ} \leq (2^\kappa)^\bfV$. 
\end{observation}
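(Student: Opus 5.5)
The plan for both parts is to compare the invariant with the ground model's $[\kappa]^\kappa$. Throughout I read the forcing in part (1) as $\bbQ = \Cohen_{\sigma,\lambda}$, i.e.\ adding $\lambda$-many $\sigma$-Cohen subsets, each coded as a function $\kappa \to \sigma$; conditions have size $<\sigma$. With only $\kappa$-many Cohens the equation $\lambda = 2^\kappa$ could not hold. I also assume the calibre in part (2) is at least the relevant one, namely calibre $\kappa$.

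\textbf{Trivial upper bound.} First I note that $\inv_\sigma(\kappa) \le 2^\kappa$ holds in any model. Take $\clA \defeq [\kappa]^\kappa$ and let $h \in {}^\kappa\sigma$. Since $\sigma < \kappa$ and $\kappa$ is regular, some fibre $h^{-1}(\{j\})$ has size $\kappa$. Taking $A$ to be that fibre gives $\rang(h \rest A) \subseteq j+1$.

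\textbf{Part (1).} There are two facts to establish in $\bfV^\bbQ$.
\begin{itemize}
\item $2^\kappa = \lambda$. The forcing has the $\kappa^+$-c.c.\ by a $\Delta$-system argument, using $\kappa^{<\kappa} = \kappa$ and hence $2^{<\sigma} \le \kappa$. Counting nice names then gives $(2^\kappa)^{\bfV^\bbQ} \le (\lambda^{\kappa})^{\kappa} = \lambda$. The reverse inequality holds because $\lambda$ distinct Cohen functions are added.
\item $\inv_\sigma(\kappa) \ge \lambda$. Let $\clA \subseteq [\kappa]^\kappa$ in $\bfV^\bbQ$ have size $<\lambda$. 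By the chain condition, each member of $\clA$ has a name depending on $\le\kappa$ coordinates. So all of $\clA$ lies in $\bfV[G \rest X]$ for some $X \subseteq \lambda$ with $|X| < \lambda$. Since $\bbQ$ is a product, I pick a coordinate $\xi \notin X$; its generic $h_\xi \colon \kappa \to \sigma$ is $\Cohen_\sigma$-generic over $\bfV[G \rest X]$. Now fix $A \in \clA$ and $i < \sigma$. Any condition $p$ has domain of size $<\sigma<\kappa$, so it misses some $\alpha \in A$, and $p$ can be extended by $h_\xi(\alpha) \defeq i$. By density, $\rang(h_\xi \rest A) \not\subseteq i$. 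Hence no $A \in \clA$ and $i < \sigma$ bound $h_\xi$, so $\clA$ fails to witness the invariant.
\end{itemize}
Combining these with the trivial upper bound gives $\inv_\sigma(\kappa) = \lambda = 2^\kappa$ in $\bfV^\bbQ$.

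\textbf{Part (2).} I would show that $\clA \defeq ([\kappa]^\kappa)^{\bfV}$ still witnesses the invariant in $\bfV^\bbQ$; its size is $(2^\kappa)^{\bfV}$, and cardinals are preserved by assumption. Let $\underset{\sim}{h}$ name a function $\kappa \to \sigma$ and let $p \in \bbQ$. For each $\alpha < \kappa$, choose in $\bfV$ a condition $p_\alpha \le p$ and $j_\alpha < \sigma$ such that $p_\alpha \Vdash \underset{\sim}{h}(\alpha) = j_\alpha$. By the calibre assumption there is $B \in [\kappa]^\kappa \cap \bfV$ such that $\{p_\alpha : \alpha \in B\}$ has a common lower bound $q$. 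Since $\sigma < \kappa$ is regular, I shrink $B$ in $\bfV$ to $A \in [B]^\kappa$ on which $\alpha \mapsto j_\alpha$ is constant, say with value $j$. Then $q \le p$ forces $\rang(\underset{\sim}{h} \rest A) \subseteq j+1$ with $A \in \clA$. By density the conclusion is forced.

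\textbf{Main obstacle.} I expect the only delicate point to be the calibre step in part (2). One must have calibre at least $\kappa$ for families of size $\kappa$, so that the common lower bound exists. If the stated $\mu$ exceeds $\kappa$, I would first use $\lepref{\partial}$-completeness to refine the $p_\alpha$ into a sequence of length $\mu$ before applying calibre. The role of $\lepref{\partial}$-completeness otherwise is only to guarantee that ${}^\kappa\sigma$ and the relevant cardinals are not disturbed.
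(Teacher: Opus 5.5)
\textbf{Part (1): the gap is your choice of forcing.} The paper states this observation without proof, so the only question is whether your argument is sound. The weak point is that you read $\bbQ$ as having conditions of size $<\sigma$. That forcing is $\lepref{\sigma}$-complete and $\kappa^+$-c.c., but nothing protects the cardinals in $(\sigma,\kappa]$. A single $\sigma$-Cohen function already collapses $2^{<\sigma}$ onto $\sigma$: by density, every ground-model subset of each $\nu<\sigma$ gets coded on some block of the generic. The hypotheses allow $2^{<\sigma}=\kappa$, e.g.\ $\sigma=\aleph_1$ and $\kappa=\aleph_2=2^{\aleph_0}=2^{\aleph_1}$. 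In that case $\kappa$ has cardinality $\sigma$ in $\bfV^\bbQ$. Your fibre argument for $\inv_\sigma(\kappa)\le 2^\kappa$ then breaks, and in fact no family works at all: a bijection $h\colon\kappa\to\sigma$ is unbounded on every $A\subseteq\kappa$ of cardinality $|\kappa|$. So you silently assumed that $\kappa$ remains a regular cardinal above $\sigma$, and for your $\bbQ$ this is false. The repair is to read $\bbQ$ as $\Cohen_{\kappa,\lambda}$, which is what the hypothesis $\kappa=\kappa^{<\kappa}$ and the convention in \ref{x63} point to. Concretely, take partial functions from $\lambda\times\kappa$ to $\sigma$ of size $<\kappa$. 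This forcing is $\lepref{\kappa}$-complete and $\kappa^+$-c.c.; the $\Delta$-system argument now uses $\kappa^{<\kappa}=\kappa$ in full. Hence all cardinals are preserved, and $2^\kappa=\lambda$ is computed exactly as you did. Your density step also survives verbatim, because a condition has domain of size $<\kappa=|A|$. Alternatively, you may keep your $\bbQ$ and add the hypothesis $2^{<\sigma}<\kappa$.

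\textbf{Part (2).} Your calibre argument is correct only if calibre $\kappa$ is taken in the strong sense: every $\kappa$ conditions contain $\kappa$ many with a common lower bound. Mere centredness would not give a single $q$ deciding all the relevant values. However, the ``main obstacle'' is illusory, and your proposed fix for $\mu>\kappa$ is not an argument. Calibre $\mu$ says nothing about families of size $\kappa$: padding your $\kappa$ conditions to length $\mu$ by repetition just returns $\mu$ copies of one $p_\alpha$. What actually does the work is the hypothesis you relegate to your last sentence. From $\partial=\partial^\kappa$ we get $\partial\ge 2^\kappa>\kappa$, so a $\lepref{\partial}$-complete $\bbQ$ adds no new $\kappa$-sequences of ordinals. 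Hence $({}^\kappa\sigma)^{\bfV^\bbQ}=({}^\kappa\sigma)^{\bfV}$, and each such $h$ is handled in $\bfV$ by your fibre argument. The family $([\kappa]^\kappa)^{\bfV}$ has size at most $(2^\kappa)^{\bfV}$ in $\bfV^\bbQ$, so it witnesses the bound, and calibre is never used.
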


\newpage
\section{The Black Box property}\label{S2}

\begin{definition}\label{a2}
1) We say $\bfp = (\big\LL (C_\delta,C_\delta',\olsi C_\delta) : \delta \in S\big\RR,\clD)$ is a \emph{$(\lambda,\kappa,\chi)$-$\BB^1$-parameter} \underline{when} 
\begin{enumerate}
    \item 
    \begin{enumerate}
        \item $\lambda > \kappa$ are regular.
\sn
        \item $S$ is a stationary subset of $S_\kappa^\lambda$.
    \end{enumerate}
\sn
    \item $C_\delta \subseteq \delta$ with $\sup(C_\delta) = \delta$.
\sn
    \item 
    \begin{enumerate}
        \item $C_\delta' \subseteq \delta$, $\sup(C_\delta') = \delta$, and $\otp(C_\delta') = \kappa$. (We do not require that $C_\delta$ \emph{or} $C_\delta'$ be closed in $\delta$.)
\sn
        \item Let $\bar \beta_\delta = \LL \beta_{\delta,i} : i < \kappa\RR$ list $C_\delta'$ in increasing order.
    \end{enumerate}
    
\sn
    \item 
    \begin{enumerate}
        \item $\olsi C_\delta = \LL C_{\delta,i} : i < \kappa\RR$
\sn
        \item $C_{\delta,i} \subseteq C_\delta \cap \beta_{\delta,i}$
\sn
        \item $|C_{\delta,i}| < \chi$
    \end{enumerate}
\sn
    \item For all $\alpha < \lambda$, the set 
    $$
    \big\{ C_{\delta,i} : \delta \in S,\ i < \kappa,\ \beta_{\delta,i} = \alpha \big\}
    $$
    has cardinality $<\lambda$.
\sn
    \item $\clD \in \up(\kappa)$.
\end{enumerate}

\sn
1A) Above, replacing $\BB^1$ by $\BB^2$ means strengthening clause (1)(D)(b) to 
\begin{enumerate}
    \item [(b$'$)] $C_{\delta,i} \defeq C_\delta \cap \beta_{\delta,i} \setminus \beta_{\delta,<i}$.
\end{enumerate}

\sn
1B) Replacing $\BB^1$ by $\BB^3$ means strengthening clause (1)(D)(b) to 
\begin{enumerate}
    \item [(b$''$)] $C_{\delta,i} \defeq C_\delta \cap \beta_{\delta,i}$.
\end{enumerate}

\sn
2) We say $\olsi F = \LL F_{\delta,i} : \delta \in S,\ i < \kappa\RR$ is a \emph{continuous $(\bfp,\partial,\theta)$-$\BB^1$-coloring} \underline{when} ($S = S_\bfp$ and) 
\begin{enumerate}
    \item $F_{\delta,i} : {}^{C_{\delta,i}}\partial \to \theta$ for $\delta \in S$ and $i < \kappa$ (recalling 
    $C_{\delta,i} \subseteq C_\delta \cap \beta_{\delta,i}$). 
\sn
    \item For $\beta < \lambda$, the set $\clF_\beta \defeq \{F_{\delta,i} : \delta \in S,\ i < \kappa,\ \beta_{\delta,i} = \beta\}$ has cardinality $<\lambda$. 
\end{enumerate}

\sn 
2A) For $\olsi F$, $\partial$, and $\theta$ as above and $D$ a filter on $\lambda$, we say 
$\bar c = \LL c_{\delta,i} : \delta \in S,\ i < \kappa\RR \in {}^{S\times \kappa} \theta$ is a 
$\bfp$-$D$-$\olsi F$-\emph{$\BB^1$-sequence} \underline{when} for every 
$\eta \in {}^\lambda\partial$ the set 
$$
\big\{\delta \in S : (\exists^\clD i < \kappa)[F_{\delta,i}(\eta \rest C_{\delta,i})  = c_{\delta,i}] \big\}
$$ 
is a member of $D^+$ (and in the default case, a stationary subset of $\lambda$).\footnote{
    Naturally, $(\exists^\clD i < \kappa)\varphi_i$ is shorthand for $(\exists u \in \clD)(\forall i \in u)\varphi_i$.
}


\sn 
3) We say that $\gamma,\delta \in S$ are $\bfp$-\emph{similar} \underline{when}:
\begin{enumerate}[$\bullet_1$]
    \item $\otp(C_\gamma) = \otp(C_\delta)$ (Recall that we only demanded $\otp(C_\gamma') = \otp(C_\delta') = \kappa$.)
\sn
    \item $i < \kappa \Rightarrow \otp(C_{\gamma,i}) = \otp(C_{\delta,i})$.
\end{enumerate}

\sn 
4) We say $\olsi F = \LL F_\delta : \delta \in S\RR$ is a \emph{uniform $(\bfp,\partial,\theta)$-coloring} \underline{when} the implication `(A) $\Rightarrow$ (B)' holds, where:
\begin{enumerate}
    \item 
    \begin{enumerate}
        \item $\delta_1$ and $\delta_2$ are $\bfp$-similar.
\sn
        \item $f_\ell : C_{\delta_\ell} \to \theta$ for $\ell = 1,2$.
\sn
        \item If $\gamma_\ell \in  C_{\delta_\ell}$ for $\ell =1,2$, then 
        $$
        \otp(\gamma_1 \cap C_{\delta_1}) = \otp(\gamma_2 \cap C_{\delta_2}) \Rightarrow f_1(\gamma_1) = f_2(\gamma_2).
        $$
    \end{enumerate}
    
\sn
    \item $F_{\delta_1}(f_1) = F_{\delta_2}(f_2)$.
\end{enumerate}

\sn 
5) `$\LL F_{\delta,i} : \delta \in S,\ i < \kappa\RR$ is a \emph{uniformly continuous} $(\bfp,\partial,\theta)$-coloring' is defined similarly, but we replace (4)(B) with the demand 
\begin{enumerate}
    \item [(B)$'$] $i < \kappa \Rightarrow F_{\delta_1,i}(f_1 \rest C_{\delta_1,i}) = F_{\delta_2,i}(f_2 \rest C_{\delta_2,i}).$
\end{enumerate}

\sn 
6) When we write $\sigma$-uniform instead of uniform, this means that in clause (4)(A)(a) we replace `$\bfp$-similar' by `$\clE$-equivalent' for some equivalence relation $\clE$ on $S$ with $\leq\sigma$ equivalence classes satisfying 
$$
\gamma\ \clE\ \delta \Rightarrow \text{[$\gamma$ is $\bfp$-similar to $\delta$].}
$$
\end{definition}

\mn
\begin{remark}\label{a3}
1) Regarding $\BB^2$ --- the idea is that  $C_\delta$ and the $C_{\delta,i}$-s are defined in terms of $C_\delta'$ and 
$\beta_{\delta,i}$ (recalling $C_\delta' = \{\beta_{\delta,i} : i < \kappa\}$). 

On the one hand, we can choose $C_\delta \defeq \delta $, in which case 
$\chi = \lambda$ and we may choose $C_{\delta,i} \defeq [\beta_{\delta,<i},\beta_{\delta,i}]$. 

On the other hand, we may choose 
$C_\delta' \defeq C_\delta$ and 
$$
C_{\delta,i} \defeq 
\begin{cases}
    \{\beta_{\delta,i-1}\} &\text{if $i$ is successor}\\
    \varnothing &\text{otherwise.}
\end{cases}
$$

In both cases we get clause \ref{a2}(1)(E); that is,
$$
\beta < \lambda \Rightarrow \big| \{C_{\delta,i} : \delta \in S,\ i < \kappa,\ \beta_{\delta,i} = \beta\} \big| < \lambda.
$$
This will be used in clause $(*)_4$(d) in the proof of \ref{a8}.

\sn
2)  Uniformity (defined in \ref{a2}(4)-(6)) is used only in \ref{a37}.

\sn
3) Note that in Lemma \ref{a8} 
we may choose $\clD = \clD_\bfp \defeq \{\kappa\}$ --- {the best, the desired case.}

But then the assumption on $\Sep_3$ is stronger, so it is better to apply it to $\clD \defeq [\kappa]^\kappa$; then we shall be able to upgrade the conclusion
in \ref{a28}, \ref{a31}.

\sn
4) For the $\BB^1$ version, we may omit \ref{a2}(3)$\bullet_1$.
\end{remark}

\mn
\begin{definition}\label{a5}
1) We say that $\bfp$ has the \emph{continuous $(D,\partial,\theta)$-$\BB^1$-property} \underline{when} it has the $(D,\partial,\theta)$-$\olsi F$-$\BB^1$-property (see Definition \ref{a2}(2A)) for every continuous $(\bfp,\partial,\theta)$-coloring $\olsi F$.

\sn
By this, we mean that the implication `(A) $\Rightarrow$ (B)' holds, where:
\begin{enumerate}
    \item 
    \begin{enumerate}
        \item $\bfp$ is a $(\lambda,\kappa,\chi)$-BB-parameter.
\sn
        \item $D$ is a filter on $\lambda$.
\sn
        \item $\olsi F$ is a \emph{continuous} $(\bfp,\partial,\theta)$-coloring.
    \end{enumerate}
\sn
    \item There exists a $\bfp$-$D$-$\olsi F$-$\BB^1$-sequence.
\end{enumerate}

\sn
2) Again, if $D$ is the club filter on $\lambda$ plus $S_\bfp$, then we may omit it.
\end{definition}

\mn
\begin{discussion}\label{a6}
The next claim is related to \ref{x26}. We restrict ourselves to continuous colorings, but we gain by omitting demand \ref{x26}$(f)$, which restricted 
$\kappa$ (that is, the cofinality of members of $S$). 
Also, if $\lambda = \lambda^{<\lambda}$ (hence $\lambda = 2^\mu$) then we may choose $\chi \defeq \lambda$.
\end{discussion}

\mn
\begin{lemma}\label{a8}
We have $`(A) \Rightarrow (B)$\emph{'}, where
\begin{enumerate}[$(A)$]
    \item 
    \begin{enumerate}[$(a)$]
        \item $\mu <\lambda = \cf(\lambda) = \cf(2^\mu)$
\sn
        \item $D$ is a $\mu^+$-complete filter on $\lambda$ extending the club filter.
\sn
        \item $\kappa = \cf(\kappa) < \chi \leq \lambda$
\sn
        \item $\bfp = (\big\LL (C_\delta,C_\delta',\olsi C_\delta) : \delta \in S \big\RR,\clD)$ is a $(\lambda,\kappa,\chi)$-$\BB^1$-parameter, where $S \in D^+$.
\sn
        \item $\theta \leq \mu$ and $\theta^{<\chi} \leq 2^\mu$.
\sn
        \item $\Sep_3(\lambda;\mu,\mu,\partial,\theta,\kappa,\clD)$.
    \end{enumerate}
\sn
    \item $\bfp$ has the continuous $(D,2^\mu,\theta)$-$\BB^1$-property.
\end{enumerate}
\end{lemma}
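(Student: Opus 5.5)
We argue by contradiction, following the template of the proof of \ref{x26} in \cite{Sh:775}, but using $\Sep_3$ to handle all $i<\kappa$ simultaneously; this is what lets us drop the tree hypothesis \ref{x26}$(f)$. Fix a continuous $(\bfp,2^\mu,\theta)$-coloring $\olsi F$ and suppose no $\bfp$-$D$-$\olsi F$-$\BB^1$-sequence exists.

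\emph{Step 1: coding.} Since $\theta^{<\chi}\le 2^\mu$ and $|C_{\delta,i}|<\chi$, fix for each $\beta<\lambda$ an enumeration of $\clF_\beta$ (it has size $<\lambda$ by \ref{a2}(2)(B)) and of the sets $C_{\delta,i}$ with $\beta_{\delta,i}=\beta$ (size $<\lambda$ by \ref{a2}(1)(E)). Code each such pair $(C_{\delta,i},F_{\delta,i})$ together with a restriction $\eta\rest C_{\delta,i}$ by a single $\nu\in{}^\mu\partial$; we take $\partial=2^\mu$ after the usual translation, as in \ref{x47}. Fix $\bar f=\LL f_{\eps,i}\RR$ and $\olsi\cP=\LL\cP_\xi:\xi<\lambda\RR$ witnessing $\Sep_3(\lambda;\mu,\mu,\partial,\theta,\kappa,\clD)$. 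For every $\xi<\lambda$ choose $\bar\varrho^\xi=\LL\varrho^\xi_i:i<\kappa\RR$ as in \ref{x38}(1)(C).

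\emph{Step 2: candidate sequences.} For each $\eps<\mu$ and each choice function $\xi(\cdot)$ assigning levels, define $c^\eps_{\delta,i}\defeq\varrho^{\xi(\delta,i)}_i(\eps)$. We arrange, via a bookkeeping map from $\beta_{\delta,i}$ to levels $\xi$ that is increasing and continuous, that $f_{\eps,i}(\nu)=F_{\delta,i}(\eta\rest C_{\delta,i})$ whenever $\nu$ codes the relevant data. Since each $\bar c^\eps$ fails, there is $\eta_\eps\in{}^\lambda(2^\mu)$ and a club-in-$D$ set $E_\eps$ such that for $\delta\in S\cap E_\eps$, fewer than $\clD$-many $i$ satisfy $F_{\delta,i}(\eta_\eps\rest C_{\delta,i})=c^\eps_{\delta,i}$. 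Because $D$ is $\mu^+$-complete, $E\defeq\bigcap_\eps E_\eps\in D$.

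\emph{Step 3: the contradiction.} Now combine the $\eta_\eps$, $\eps<\mu$, into one $\eta^*\in{}^\lambda({}^\mu(2^\mu))\cong{}^\lambda(2^\mu)$. Using $\lambda=\cf(2^\mu)$, pick a club of $\xi$ such that for $\beta<\xi$ the codes of all $\eta^*\rest C_{\delta,i}$ with $\beta_{\delta,i}=\beta$ lie in $\cP_\xi$. Since $S\in D^+$, take $\delta\in S\cap E$ closing these. By continuity we have $C_{\delta,i}\subseteq\beta_{\delta,i}$, so the sequence $\LL\nu_i:i<\kappa\RR$ coding $\eta^*\rest C_{\delta,i}$ lies in ${}^\kappa(\cP_\xi)$ for a single $\xi$ that depends only on $\delta$. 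Then \ref{x38}(1)(C)(b) gives $\eps<\mu$ and $u\in\clD$ with $f_{\eps,i}(\nu_i)=\varrho_i^\xi(\eps)=c^\eps_{\delta,i}$ for all $i\in u$. Through the coding this says $F_{\delta,i}(\eta_\eps\rest C_{\delta,i})=c^\eps_{\delta,i}$ on a $\clD$-set, which contradicts $\delta\in E_\eps$.

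\emph{Main obstacle.} The delicate step is ensuring that a single $\xi$ serves all $i<\kappa$ at $\delta$. This requires the bookkeeping from $\beta$ to $\xi$ to be defined uniformly on a club so that $\sup_i\xi(\delta,i)$ stays below some $\xi_\delta$; this is exactly where $\cf(\lambda)=\lambda>\kappa$ and the limit structure $\cf(\delta)=\kappa$ are used. The other delicate point is making $f_{\eps,i}$ decode $F_{\delta,i}$ independently of $\delta$: we fold the index of $F_{\delta,i}$ within $\clF_\beta$ into $\nu_i$, and let $c_{\delta,i}$ be chosen at the $\xi$ corresponding to $\beta_{\delta,i}$ rather than at $\xi_\delta$. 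To reconcile the two, I would first try replacing $\varrho^{\xi(\delta,i)}$ by $\varrho^{\xi_\delta}$, which requires that $c$ be defined via $\delta$ itself. This is allowed, since $\bar c$ is indexed by $(\delta,i)$.
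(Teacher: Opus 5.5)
\paragraph{Gap: the separating functions cannot decode the colouring.}
Your Step~3 depends on the sentence in Step~2 that you ``arrange'' $f_{\eps,i}(\nu)=F_{\delta,i}(\eta\rest C_{\delta,i})$ whenever $\nu$ codes the relevant data. That is what lets you read $f_{\eps,i}(\nu_i)=\varrho^\xi_i(\eps)$ as $F_{\delta,i}(\eta_\eps\rest C_{\delta,i})=c^\eps_{\delta,i}$, and it cannot be arranged. The family $\bar f$ is handed to you by the hypothesis $\Sep_3(\lambda;\mu,\mu,\partial,\theta,\kappa,\clD)$. It knows nothing about $\olsi F$, nor which coordinate of your code is the $\eps$-th counterexample; in \ref{x20} the $f_\eps$ are simply all functions lying in a model of size $\mu$. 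Requiring $f_{\eps,i}$ to be the decoder ``evaluate the coded colouring on the $\eps$-th coded function'' means requiring the separation clause for one specific projection-type family. For that family the clause is itself a strong guessing statement: one $\bar\varrho$ must agree with every $\bar\nu\in{}^\kappa(\cP_\xi)$ at a common $\eps$ on a $\clD$-set. The hypothesis does not provide this. Without it, Step~3 yields only an identity between values of arbitrary functions at your codes, which does not contradict $\delta\in E_\eps$. A secondary point: the candidate $\bar c^\eps$ must be fixed before the $\eta_\eps$ are chosen. A level map built from where codes of $\eta^*$ appear is therefore circular; only a level fixed in advance, such as $\xi=\delta$ as your last remark suggests, is legitimate.

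\paragraph{The missing idea: swap the roles.}
Apply the separating functions to the pre-chosen $\varrho$'s, and put into the filtration the vector of true colours, indexed by the counterexample. Concretely:
\begin{itemize}
\item Set $c_{\eps,\delta,i}\defeq f_{\eps,i}(\varrho^\delta_i)$, where $\bar\varrho^\delta$ is chosen for level $\xi=\delta$.
\item Suppose each $\bar c_\eps$ fails via $(\eta_\eps,E_\eps)$. For $\beta\in\clC'$ and $F\in\clF_\beta$ let $\rho_{\beta,F}\defeq\LL F(\eta_\eps\rest C_F):\eps<\mu\RR\in{}^\mu\theta$.
\item Since $|\clF_\beta|<\lambda=\cf(\lambda)$, the levels $\min\{\zeta:\rho_{\beta,F}\in\cP_\zeta\}$ are bounded by some $\zeta_\beta<\lambda$.
\item Intersect the club of closure points of $\beta\mapsto\zeta_\beta$ with $\bigcap_{\eps<\mu}E_\eps$; this lies in $D$ by $\mu^+$-completeness. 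Pick $\delta\in S$ in this set, which is possible as $S\in D^+$.
\item Since each $\beta_{\delta,i}<\delta$, you get $\LL\rho_{\beta_{\delta,i},F_{\delta,i}}:i<\kappa\RR\in{}^\kappa(\cP_\delta)$.
\item The separation clause gives $\eps<\mu$ and $u\in\clD$ with $f_{\eps,i}(\varrho^\delta_i)=\rho_{\beta_{\delta,i},F_{\delta,i}}(\eps)=F_{\delta,i}(\eta_\eps\rest C_{\delta,i})$ for all $i\in u$. So $\bar c_\eps$ succeeds at $\delta$, contradicting $\delta\in E_\eps$.
\end{itemize}
This argument uses the clause in the orientation $f_{\eps,i}(\varrho_i)=\nu_i(\eps)$. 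That is what the derivation from $\Sep_1$ in the proof of \ref{x32} actually yields, since $\varrho_i\notin\Sol_{\nu_i}$ means $f_\eps(\varrho_i)=\nu_i(\eps)$ for some $\eps$. It is also the orientation the paper's candidate $c_{\eps,\delta,i}\defeq f_{\eps,i}(\varrho^\delta_i)$ presupposes. Following the printed formula $f_{\eps,i}(\nu_i)=\varrho_i(\eps)$ literally is what pushed you into needing $f_{\eps,i}$ to decode $F_{\delta,i}$. With the roles swapped, no coding of $\eta\rest C_{\delta,i}$ and no combined $\eta^*$ are needed.
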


\sn
\begin{remark}\label{a11}
1) If $C_{\delta_1,\eps_1} = C_{\delta_2,\eps_2} \Rightarrow F_{\delta_1,\eps_1} = F_{\delta_2,\eps_2}$, \underline{then} in the proof below, we can replace $\clF_\beta$ by $\clC_\beta \defeq \{C_{\delta,i} : \delta \in S,\ i < \kappa,\ \beta_{\delta,i} = \beta\}$.

\mn
2) The main case is $\partial \defeq 2^\mu$.
\end{remark}

\begin{PROOF}{\ref{a8}}
First,
\begin{enumerate}
    \item [$(*)_1$] Let $\olsi F = \LL F_{\delta,i} : \delta \in S,\ i < \kappa\RR$ be a continuous $(\bfp,2^\mu,\theta)$-coloring. It will suffice to show that there is a $\bfp$-$D$-$\olsi F$-sequence.
\end{enumerate}

Now by assumption $(A)(f)$,
\begin{enumerate}
    \item [$(*)_2$] Let $\olsi f = \LL f_{\eps,i} : \eps < \mu,\ i < \kappa\RR$ and $\olsi \cP = \LL \cP_\xi : \xi < \lambda\RR$ exemplify 
    $$
    \Sep_3(\lambda;\mu,\mu,\partial,\theta,\kappa,\clD).
    $$
    \item [$(*)_3$] For each $\delta \in S$, choose $\bar\varrho^\delta = \LL \varrho_i^\delta : i < \kappa\RR \in {}^\kappa({}^\mu\theta)$ as in Definition \ref{x38}(1)(C).
    
    (Recall that $\delta \in S \Rightarrow \delta < \lambda$, so the $\delta$ here corresponds to the subscript $\xi$ in the definition.)
\end{enumerate}

Now for every $\eps < \mu$ we suggest a possible $\bfp$-$D$-$\olsi F$-sequence $\bar c_\eps$:
\begin{enumerate}
    \item [$(*)_4$] $\bar c_\eps = \LL c_{\eps,\delta,i} : \delta \in S,\ i < \kappa\RR$, where $c_{\eps,\delta,i} \defeq f_{\eps,i}(\varrho_i^\delta) \in \theta$.
\sn
    \item [$(*)_5$] Let $\clF_\beta \defeq \{F_{\delta,i} : \delta \in S,\ i < \kappa,\ \beta_{\delta,i} = \beta\}$, so
    \begin{itemize}
        \item $\dom(F_{\delta,i}) = {}^{(C_{\delta,i})}({}^\mu\partial)$ (That is, the set of functions $f : C_{\delta,i} \to {}^\mu\partial$.)
\sn
        \item $\rang(F_{\delta,i}) \subseteq \theta$
\sn
        \item If $F = F_{\delta,i} \in \clF_\beta$, then we may write $C_F$ as a well-defined shorthand for $C_{\delta,i}$.
    \end{itemize}
\sn
    \item [$(*)_6$] Let $\clC' \defeq \bigcup\limits_{\delta \in S} C_\delta'$.
\end{enumerate}

If for some $\eps < \mu$ the sequence $\bar c_\eps$ is as required (i.e.\ it is a $\bfp$-$D$-$\olsi F$-sequence) then we are done.
So toward contradiction, assume
\begin{enumerate}
    \item [$\boxplus_{6.1}$] For all $\eps < \mu$ we can choose $\eta_\eps \in {}^\lambda({}^\mu\partial)$ and $E_\eps \in D$ such that
    $$
    \delta \in E_\eps \Rightarrow \big\{i < \kappa : c_{\eps,\delta,i} = F_{\delta,i}(\eta \rest C_{\delta,i}) \big\} \notin \clD.
    $$

    \item [$(*)_7$] For every {$\beta \in \clC'$} and $F \in \clF_\beta$, let
    \begin{itemize}
        \item  $\rho_{\beta,F} \defeq \LL F(\eta_\eps \rest C_F) : \eps < \mu\RR$ (so $\rho_{\beta,F} \in {}^\mu\theta$).
\sn
        \item ${j_{\beta,F}} \defeq \min\{\zeta < \lambda : \rho_{\beta,F} \in \cP_\zeta\}$
\sn
        \item $\zeta_\beta \defeq \sup\{j_{\beta,F} : F \in \clF_\beta\} < \lambda$
    \end{itemize}
\end{enumerate}
[Why can we do this? Because $|\clF_\beta| < \lambda = \cf(\lambda)$ by \ref{a2}(2)(B).]

\begin{enumerate}
    \item [$(*)_8$] Let $E_* \defeq \{\delta < \lambda : \delta \text{ limit, and } \beta < \delta \Rightarrow \zeta_\beta < \delta\}$.
    \begin{enumerate}
        \item $E_*$ is a club of $\lambda$.
\sn
        \item $E \defeq E_* \cap \bigcap\limits_{\eps < \mu} E_\eps \in D$
    \end{enumerate}
\end{enumerate}
[Why {is this in the filter}? We declared each $E_\eps \in D$ by $\boxplus_{6.1}$, and clause (a) holds by our construction. By assumption $(A)(b)$, $D$ contains all clubs and is $\mu^+$-complete.]
\begin{enumerate}    
    \item [$(*)_9$] For every $\delta \in E$ there exists $\eps = \eps_\delta < \mu$ such that 
    $$
    \{i < \kappa : F_{\delta,i}(\eta_\eps \rest C_{\delta,i}) = c_{\eps,\delta,i} \} \in \clD.
    $$ 
\end{enumerate}
[Why? By the choice of $\bar\varrho^\delta$ --- that is, by the definition of $\Sep_3$ --- because\\ $\eps < \mu \Rightarrow \eta_\eps \in \cP_\delta$.]
\begin{enumerate}    
    \item [$(*)_{10}$] For some $\eps < \mu$, 
    $$
    A_\eps \defeq \{ \delta \in E \cap S : \eps_\delta = \eps\} \in D^+.
    $$
\end{enumerate}
[Why? By $(*)_9$ and the fact that $D$ is $\mu^+$-complete.]

\sn
But this is a contradiction.
\end{PROOF}

\mn
\begin{remark}\label{a20}
It is nice to successfully predict {the values of} $\LL F_{\delta,i}(\eta) \rest C_{\delta,i} : i \in u\RR$ on some $u \in [\kappa]^\kappa$, but it would be better to succeed for $u = \kappa$. 

One possibility: what if we just assume $\theta = \theta^{<\kappa}$, and for each $u \subseteq \kappa$ we define $\bfp_{[u]}$ by $(S_{\bfp_{[u]}},\olsi C_{\bfp_{[u]}}) \defeq (S_\bfp,\olsi C_\bfp)$, but 
$$
C_{\bfp_{[u]},\delta}' \defeq \{\alpha \in C_{\bfp,\delta}' : \otp(C_{\bfp,\delta}' \cap \alpha) \in u\}?
$$
\underline{Or} use a regressive function $h : u \to \kappa$? Something close is done below.
\end{remark}

\mn
\begin{definition}\label{a22}
Let $\bfp$ be a $(\lambda,\kappa,\chi)$-{$\BB^1$}-parameter.

For $A \in [\kappa]^\kappa$, we define a $(\lambda,\kappa,\chi)$-$\BB^1$-parameter 
$$
\bfp_{\!A} = \bfp[A] = (\big\LL C_{\!A,\delta},C_{\!A,\delta}',\olsi C_{\!A,\delta} : \delta \in S\big\RR,\clD_A)
$$
by
\begin{itemize}
    \item $C_{\!A,\delta} \defeq C_\delta$
\sn
    \item $C_{\!A,\delta}' \defeq \{\beta \in C_\delta' : \otp(C_\delta' \cap \beta) \in A\}$.
\sn
    \item $\olsi C_{\!A,\delta} = \LL C_{\!A,\delta,i} : i < \kappa\RR$ is defined by $C_{\!A,\delta,i} \defeq C_{\delta,h_A^{\supminus1}(i)}$, where 
    $h_A : A \to \kappa$ is the function $i \mapsto \otp(A \cap i)$.\footnote{
        Note that this function is invertible as it is strictly increasing.
    }
\sn
    \item $\clD_A \defeq \{\kappa\}$ 
\end{itemize}
\end{definition}

\sn
\begin{observation}\label{a25}
$1)$ $\bfp_{\!A}$, as defined above, is indeed a $(\lambda,\kappa,\chi)$-$\BB^1$-parameter.

\sn
$2)$ If $\bfp$ is good then so is $\bfp_{\!A}$.
\end{observation}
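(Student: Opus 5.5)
The plan is to verify clauses (A)–(F) of Definition \ref{a2}(1) for $\bfp_{\!A}$ one at a time. Each clause should follow directly from the corresponding clause for $\bfp$, once we record how the new enumeration of $C_{\!A,\delta}'$ sits inside the old one. So the first step is bookkeeping.

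Since $A \in [\kappa]^\kappa$ and $\kappa$ is regular, $A$ is unbounded in $\kappa$ and $\otp(A)=\kappa$. Hence $h_A : A \to \kappa$ is an order isomorphism onto $\kappa$, and $h_A^{-1}(i)$ is the $i$-th element of $A$. Write $\bar\beta_\delta$ for the increasing enumeration of $C_\delta'$. Then $C_{\!A,\delta}' = \{\beta_{\delta,j} : j \in A\}$, and its increasing enumeration is
$$
\beta_{\!A,\delta,i} = \beta_{\delta,h_A^{-1}(i)} \qquad (i<\kappa).
$$
This identity drives every later step.

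With it, the clauses are checked as follows.
\begin{enumerate}[(a)]
    \item Clause (A) is unchanged, since $S$ is unchanged.
    \item Clause (B) holds because $C_{\!A,\delta}=C_\delta$.
    \item For clause (C), $C_{\!A,\delta}' \subseteq C_\delta' \subseteq \delta$ has order type $\otp(A)=\kappa$. It is unbounded in $\delta$ because $A$ is unbounded in $\kappa$ and $\bar\beta_\delta$ is cofinal in $\delta$.
    \item For clause (D), $C_{\!A,\delta,i} = C_{\delta,h_A^{-1}(i)}$. This set is contained in $C_\delta \cap \beta_{\delta,h_A^{-1}(i)} = C_{\!A,\delta}\cap \beta_{\!A,\delta,i}$ and has cardinality $<\chi$.
    \item For clause (E), fix $\alpha<\lambda$. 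By the identity above, the set $\{C_{\!A,\delta,i} : \beta_{\!A,\delta,i}=\alpha\}$ is contained in $\{C_{\delta,j} : \beta_{\delta,j}=\alpha\}$, which has cardinality $<\lambda$ by clause (E) for $\bfp$.
    \item Clause (F) holds because $\clD_A=\{\kappa\}$ is non-empty and upward closed in $\clP(\kappa)$, so it belongs to $\up(\kappa)$.
\end{enumerate}
This proves part (1).

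For part (2), goodness of $\bfp$ in the sense of \ref{x5}(1A) is the cardinality bound on $\{(C_\delta\cap\alpha, C_\delta'\cap\alpha) : \alpha\in C_\delta'\}$. For $\bfp_{\!A}$ the relevant pairs are $(C_\delta\cap\alpha, C_{\!A,\delta}'\cap\alpha)$ with $\alpha\in C_{\!A,\delta}'\subseteq C_\delta'$. Here $C_{\!A,\delta}'\cap\alpha$ is computed from $C_\delta'\cap\alpha$ and the fixed set $A$, because $\otp(C_\delta'\cap\beta)$ for $\beta<\alpha$ depends only on $C_\delta'\cap\alpha$. So the new family of pairs is the image of a subfamily of the old one under a function, and it therefore still has cardinality $<\lambda$.

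Nothing here is a real obstacle. The only point needing care is the reindexing identity, in particular checking that $h_A^{-1}$ is defined on all of $\kappa$, which is exactly where $A\in[\kappa]^\kappa$ and the regularity of $\kappa$ are used.
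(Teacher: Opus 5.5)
Your proposal is correct and is essentially the only proof there is: the paper states this observation without proof, leaving it as a direct check of Definitions \ref{a2}(1) and \ref{a22}. For part (2) the relevant clause is (C)$^+$ of \ref{x5}(1A), which is the natural reading since goodness is not separately defined for $\BB^1$-parameters, and your reindexing $\beta_{\!A,\delta,i}=\beta_{\delta,h_A^{-1}(i)}$ is exactly the bookkeeping needed. One quibble: regularity of $\kappa$ plays no role there, since any $A\subseteq\kappa$ with $|A|=\kappa$ already has order type $\kappa$ and is unbounded in $\kappa$.
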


\sn
\begin{claim}\label{a28}
Assume $\bfp$ is a $(\lambda,\kappa,\chi)$-$\BB^1$-parameter, 
$\clD_\bfp \defeq [\kappa]^\kappa$, $D$ is a $(2^\kappa)^+$-complete filter 
on $\lambda$, 
$\partial = \partial^{2^\kappa}$, and $\theta = \theta^{2^\kappa}$.

\sn
\underline{Then} $\bfp$ has the continuous $(D,\partial,\theta)$-$\BB^1$-property \underline{iff} $\bfp_{\!A}$ has the continuous\\ $(D,\partial,\theta)$-$\BB^1$-property for some $A \in [\kappa]^\kappa$.
\end{claim}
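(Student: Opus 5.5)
The direction ``$\Leftarrow$'' is a direct translation of colorings and predictions. The direction ``$\Rightarrow$'' I would prove by contraposition: assuming every $\bfp_{\!A}$ fails, I code the $2^\kappa$ failing colorings into a single coloring for $\bfp$. The hypotheses $\partial = \partial^{2^\kappa}$, $\theta = \theta^{2^\kappa}$ and the $(2^\kappa)^+$-completeness of $D$ are exactly what this coding needs.

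\textbf{$\Leftarrow$.} Suppose $\bfp_{\!A}$ has the property and $\olsi F$ is a continuous $(\bfp,\partial,\theta)$-coloring. Define $F^A_{\delta,i} \defeq F_{\delta,h_A^{\supminus1}(i)}$. Its domain is ${}^{C_{A,\delta,i}}\partial$, and $\beta^{\bfp_A}_{\delta,i} = \beta_{\delta,h_A^{\supminus1}(i)}$. Hence $\clF^{A}_\beta \subseteq \clF_\beta$, so $\olsi F^A$ is a continuous $(\bfp_{\!A},\partial,\theta)$-coloring. Let $\bar c^A$ be a $\bfp_{\!A}$-$D$-$\olsi F^A$-sequence, and put $c_{\delta,j} \defeq c^A_{\delta,h_A(j)}$ for $j \in A$ and $c_{\delta,j} \defeq 0$ otherwise. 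For every $\eta$ and $D^+$-many $\delta$, the prediction is correct for all $i<\kappa$ in $\bfp_{\!A}$. Therefore it is correct for all $j \in A \in [\kappa]^\kappa = \clD_\bfp$, which is what $\bfp$ requires.

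\textbf{$\Rightarrow$.} Assume that for each $A \in [\kappa]^\kappa$ there is a continuous $(\bfp_{\!A},\partial,\theta)$-coloring $\olsi F^A$ with no $\BB^1$-sequence. Fix bijections $\partial \cong {}^{[\kappa]^\kappa}\partial$ and $\theta \cong {}^{[\kappa]^\kappa}\theta$, which exist by the hypotheses. Through the first, a function $f : C \to \partial$ decodes to $\LL f_A : A \in [\kappa]^\kappa\RR$ with $f_A : C \to \partial$. Define
$$
F_{\delta,j}(f) \defeq \big\LL F^A_{\delta,h_A(j)}(f_A) : A \in [\kappa]^\kappa,\ j \in A \big\RR,
$$
padding coordinates with $j \notin A$ by $0$ and encoding the result as an element of $\theta$.

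By the assumption ``$\Rightarrow$'' is trying to refute, $\olsi F$ has a $\bfp$-$D$-$\olsi F$-$\BB^1$-sequence $\bar c$. For each $A$, read off $c^A_{\delta,i}$ as the $A$-coordinate of $c_{\delta,h_A^{\supminus1}(i)}$. Since $\bar c^A$ fails for $\olsi F^A$, there are $\eta_A \in {}^\lambda\partial$ and $E_A \in D$ such that every $\delta \in E_A \cap S$ has some $i<\kappa$ where the prediction fails. Let $\eta$ code $\LL \eta_A : A\RR$ pointwise. Then $E \defeq \bigcap_A E_A \in D$ by $(2^\kappa)^+$-completeness.

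Now pick $\delta \in E \cap S$ (there are $D^+$-many such) and $u \in [\kappa]^\kappa$ with $F_{\delta,j}(\eta\rest C_{\delta,j}) = c_{\delta,j}$ for all $j \in u$. Taking $A \defeq u$ and reading the $A$-coordinate gives $F^A_{\delta,i}(\eta_A \rest C_{A,\delta,i}) = c^A_{\delta,i}$ for every $i<\kappa$. This contradicts $\delta \in E_A$.

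\textbf{Main obstacle.} The delicate step is verifying that the combined $\olsi F$ is \emph{continuous}, i.e.\ that $|\clF_\beta| < \lambda$ (Definition \ref{a2}(2)(B)). The naive bound is only $\prod_A |\clF^A_\beta|$, which could reach $\lambda$.

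\begin{itemize}
    \item My first attempt is to observe that $F_{\delta,j}$ is determined by $j$ together with the sequence $\LL F^A_{\delta,h_A(j)} : A \ni j\RR$. I would then pre-normalize each $\olsi F^A$ so that $F^A_{\delta,i}$ depends only on $C_{A,\delta,i}$ and $\beta$, in the spirit of Remark \ref{a11}(1). The number of such $C$-s at $\beta$ is $<\lambda$ by \ref{a2}(1)(E).
    \item If that normalization fails, I would instead check that only $|\clF_\beta|<\lambda$ is used in the proof of \ref{a8} ($(*)_7$). There it suffices to bound the number of $\bfp$-relevant pairs $(\delta\text{-data at }\beta)$, and regularity of $\lambda > 2^\kappa$ should suffice.
\end{itemize}
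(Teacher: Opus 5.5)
Your argument is the paper's own. The paper also treats $\Leftarrow$ as obvious, and your verification of it (via $\clF^{A}_\beta\subseteq\clF_\beta$) is right. For $\Rightarrow$ the paper codes the $2^\kappa$ failing colorings through bijections ${}^{([\kappa]^\kappa)}\theta\to\theta$ and ${}^{([\kappa]^\kappa)}\partial\to\partial$, intersects the $E_A$ by $(2^\kappa)^+$-completeness, and reads off the coordinate $A=u$, exactly as you do. Your decoding of $f$ into $\LL f_A : A\in[\kappa]^\kappa\RR$ is what the paper's $\circledast_4$ must mean. Also, the $\delta$ you pick should lie in $E$ \emph{and} in the $D^+$-set of successes for $\eta$; this is possible since a $D^+$-set meets every member of $D$.

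The step you flag is a genuine gap, and the paper passes over it silently: its $\circledast_5$ applies the hypothesis on $\bfp$ to the combined $\olsi F$ without checking \ref{a2}(2)(B). For fixed $j$, distinct tuples $\LL F^A_{\delta,h_A(j)} : A\ni j\RR$ yield distinct members of $\clF_\beta$. Nothing prevents $\lambda$ many such tuples from being realized by the $\delta$'s with $\beta\in C'_\delta$; this is compatible with $|\clF^A_\beta|=2$ for every $A$ as soon as $2^{2^\kappa}\geq\lambda$. Without continuity you cannot obtain $\bar c$ at all.

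Neither of your repairs closes the gap.
\begin{itemize}
\item \emph{Pre-normalizing.} You cannot pre-normalize the witnesses. The $\olsi F^A$ are arbitrary continuous colorings with no $\BB^1$-sequence, and the $\BB^1$-property restricted to colorings depending only on $(C_{A,\delta,i},\beta)$ is weaker. So a failure witnessed by a $\delta$-dependent coloring need not be witnessed by a normalized one. The natural failure-preserving normalization merges all members of $\clF^A_\beta$ with a common domain into one function. That would need $\theta^{|\clF^A_\beta|}=\theta$, and $|\clF^A_\beta|$ can be any cardinal below $\lambda$.
\item \emph{Re-entering the proof of \ref{a8}.} This changes the statement: \ref{a28} concerns the property of $\bfp$, not how it was obtained, so it cannot be proved this way. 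Even inside that proof, regularity of $\lambda>2^\kappa$ only bounds $|\bigcup_A\clF^A_\beta|$. But $(*)_7$--$(*)_9$ need $\rho_{\beta,F}\in\cP_\delta$ for the \emph{combined} $F$, and $\cP_\delta$ need not be closed under coding $2^\kappa$-sequences of its members.
\end{itemize}

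What your argument does prove is a restricted version. Restrict the colorings of $\bfp_{\!A}$ to those in which $F_{\delta,i}$ is a function of $(i,C_{\delta,i},\beta_{\delta,i})$. Then the combined $F_{\delta,j}$ is a function of $(j,C_{\delta,j},\beta_{\delta,j})$. Hence $|\clF_\beta|\leq\kappa\cdot|\{C_{\delta,i}:\beta_{\delta,i}=\beta\}|<\lambda$ by \ref{a2}(1)(E), and the rest goes through verbatim. This restricted form suffices for \ref{y2}, where the colorings $F_\beta$ depend on $\beta$ alone.

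For the claim as stated, some further idea or hypothesis is needed. An example is $\sigma^{2^\kappa}<\lambda$ for all $\sigma<\lambda$. However, that fails in the main case $\lambda=\cf(2^\mu)$ when, say, $\mu$ is strong limit of cofinality $\leq 2^\kappa$.
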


\begin{PROOF}{\ref{a28}}
The $\Leftarrow$ implication is obvious, so we concentrate on $\Rightarrow$. 
Let $\bfp$ be a $(\lambda,\kappa,\chi)$-$\BB^1$-parameter and $D$ be as above.
\begin{enumerate}
    \item [$\circledast_1$] Toward contradiction, assume that $\bfp_{\!A}$ fails the continuous $(D,\partial,\theta)$-$\BB^1$-property for all $A \in [\kappa]^\kappa$.
\sn
    \item [$\circledast_2$] 
    \begin{enumerate}
        \item So for $A \in [\kappa]^\kappa$, let 
$$
        \olsi F_{\!A}^1 = \LL F_{\delta,i}^{A,1} : \delta \in S,\ i < \kappa\RR
$$
        be
        a continuous $(\bfp_{\!A},\partial,\theta)$-coloring witnessing this failure. 
        
        (I.e.\ there is no $\bfp_{\!A}$-$D$-$\olsi F_{\!A}$-$\BB^1$-sequence $\bar c \in {}^{S\times\kappa}\theta$.)
\sn
        \item Now $F_{\delta,i}^A : {}^{C_{\delta,i}}\partial \to \theta$ is defined as follows:
        \begin{enumerate}
            \item If $i \in A$ then $F_{\delta,i}^A \defeq F_{\delta,\otp(i \cap A)}^{A,1}$.
\sn
            \item If $i \in \kappa \setminus A$ then $F_{\delta,i}^A$ is the constantly zero.
        \end{enumerate}
    \end{enumerate}
\end{enumerate}
Naturally, we choose
\begin{enumerate}
    \item [$\circledast_3$] 
    \begin{enumerate}
        \item $\cd : {}^{([\kappa]^\kappa)}\theta \to \theta$ and $\cd^* : {}^{([\kappa]^\kappa)}\partial \to \partial$, both bijections.
\sn
        \item For $B \in [\kappa]^\kappa$, let $\cd_B : \theta \to \theta$ be defined so that the following diagram commutes:
        \begin{figure}[H]
        \tikz{
            \node (A) at (-1,1) {${}^{([\kappa]^\kappa)}\theta$} ;
            \node (B) at (1,1) {$\theta$} ;
            \node (C) at (1,-1) {$\theta$} ;
            
            \draw (A) edge["$\cd$",->] (B);
            \draw (B) edge["$\cd_B$",->] (C);
            \draw (A) edge["$\pi_B$",->] (C);
        }
        \end{figure} 
        where $\pi_B$ is the function which sends $\LL \zeta_A : A \in [\kappa]^\kappa\RR \mapsto \zeta_B$.
\sn
        \item $\cd_B^* : \partial \to \partial$ will be defined analogously.
    \end{enumerate}
\end{enumerate}
Next, 
\begin{enumerate}
    \item [$\circledast_4$] Choose $\olsi F = \LL F_{\delta,i} : \delta \in S,\ i < \kappa\RR$ as follows:

    $\dom(F_{\delta,i}) \defeq {}^{C_{\delta,i}}\partial$, and for $\eta$ in the domain we define
    $$
    F_{\delta,i}(\eta) \defeq \cd \big(\LL F_{\delta,i}^A(\eta) : A \in [\kappa]^\kappa\RR \big).
    $$
\end{enumerate}
By our assumption,
\begin{enumerate}
    \item [$\circledast_5$] There exists a $\bfp$-$D$-$\olsi F$-$\BB^1$-sequence $\bar c = \LL c_{\delta,i} : \delta \in S,\ i < \kappa\RR\in {}^{S\times\kappa}\theta$.
\end{enumerate}
Next,
\begin{enumerate}
    \item [$\circledast_6$] For every $A \in [\kappa]^\kappa$, we choose $\bar c_A \defeq \big\LL \cd_A(c_{\delta,i}) : \delta \in S,\ i < \kappa \big\RR$.
\end{enumerate}
If $\bar c_A$ is a $\bfp_{\!A}$-$D$-$\olsi F_{\!A}$-$\BB^1$-sequence for some $A$, then we get our contradiction. 

Therefore, assume:
\begin{enumerate}
    \item [$\circledast_7$] For each $A \in [\kappa]^\kappa$ there exist $\eta_A \in {}^\lambda\partial$ and $E_A \in D$ such that
    $$
    (\forall\delta \in S \cap E_A) (\exists i < \kappa) \big[ F_{\delta,h_A^{\supminus1}\!(i)}^{A,1}(\eta_A \rest C_{\delta,h_A^{\supminus1}\!(i)}) \neq c_{\delta,h_A^{\supminus1}\!(i)}^A \big].
    $$
    (Equivalently, $(\forall\delta \in S \cap E_A) (\exists i \in A) \big[ F_{\delta,i}^A(\eta_A \rest C_{\delta,i}) \neq c_{\delta,i}^A \big].$)
\end{enumerate}
Now,
\begin{enumerate}
    \item [$\circledast_8$] $E \defeq \bigcap\limits_{A\in[\kappa]^\kappa} E_A \in D$.
\end{enumerate}
[Why? Because we assumed $D$ is $(2^\kappa)^+$-complete.]

\sn
Next,
\begin{enumerate}
    \item [$\circledast_9$] Define $\eta \in {}^{([\kappa]^\kappa)}\partial$ as the function 
    $$
    \alpha \mapsto \cd^*\!\big(\LL \eta_A(\alpha) : A \in [\kappa]^\kappa\RR\big).
    $$
\end{enumerate}
Now we can finish as in the proof of \ref{a8}.
\end{PROOF}

\mn
\begin{conclusion}\label{a31}
Assume clause $(A)$ of Theorem \emph{\ref{a8}}. Also suppose 
$\clD_\bfp = [\kappa]^\kappa$, $2^\kappa \leq \mu$, and 
$\theta^{2^\kappa} = \theta$.

\sn
For some $A\in[\kappa]^\kappa$, $\bfp_{\!A}$ has the $(D,2^\mu,\theta)$-$\BB^1$-property.
\end{conclusion}

\begin{PROOF}{\ref{a31}}
By \ref{a8} we know 
\begin{enumerate}
    \item [$(*)_1$] $\bfp$ has the continuous $(D,2^\mu,\theta)$-$\BB^1$-property.
\end{enumerate}

Let $\partial \defeq 2^\mu$.
We would like to apply \ref{a28}, so let us check its assumptions.
First, $\bfp$ is a $(\lambda,\kappa,\chi)$-$\BB^1$-parameter by \ref{a8}(A)(d), one of our assumptions.

We also assumed $\clD_\bfp = [\kappa]^\kappa$ and 
$\theta^{2^\kappa} = \theta$, so we don't have to worry about those.

`$D$ is a $\mu^+$-complete filter' was assumed in \ref{a8}(A)(b) and we added $2^\kappa \leq \mu$, so $D$ is also $(2^\kappa)^+$-complete.

Lastly, $\partial^{2^\kappa} = \partial$ holds because we defined 
$\partial \defeq 2^\mu$; hence $(2^\mu)^\mu = 2^\mu$, and again 
$2^\kappa \leq \mu$.

Therefore the conclusion in \ref{a28} holds, giving us our desired conclusion.
\end{PROOF}

\mn
\begin{conclusion}\label{a37}
$1)$ We can add the following to the conclusion of \emph{\ref{a8}}.

If $\lambda_* = \cf(\lambda_*) \geq \lambda$, \underline{then} there exists a $\bfp^*$ such that
\begin{enumerate}[$(a)$]
    \item $\bfp^*$ is a good $(\lambda_*^+,\kappa,\chi)$-$\BB_*$-parameter.
\sn
    \item $\bfp^*$ has the continuous $\lambda$-uniform $(D,2^\mu,\theta)$-$\BB^0$-property.
\end{enumerate}

\sn
$2)$ If $\bfp$ is a $(\lambda,\kappa,\chi)$-$\BB^0$-parameter with the 
$(D,2^\mu,\theta)$-$\BB^0$-property, \underline{then} $(\bfp,\{\kappa\})$ is a $(\lambda,\kappa,\chi)$-$\BB^1$-parameter with the $(D,2^\mu,\theta)$-$\BB^1$-property.
\end{conclusion}

\begin{PROOF}{\ref{a37}}
1) By \cite[\S4]{Sh:351}, as in \cite[\S2]{Sh:775}.

\sn
2) Easy, by the definitions.
\end{PROOF}

\mn
\begin{remark}\label{a40}
We can say more in \ref{a37}, replacing $\lambda_*^+$ by $\lambda'$ weakly inaccessible or successor of singular: see \cite{Sh:775}.
\end{remark}

\bn
\centerline{*\qquad*\qquad*}

\bigskip
We are now ready to prove Theorem \ref{y2} as promised in the introduction.
\begin{quotation}
    The reader would be well-advised to keep the statements of Theorem \ref{y2} on page \pageref{y2}, Definition \ref{a2}(1) on page \pageref{a2}, and Lemma \ref{a8} on page \pageref{a8} close at hand while reading this proof.
\end{quotation}
\begin{PROOF}[\textsc{Proof of \ref{y2}:}]{\ref{y2}}\label{a43}




Let us define a $\BB^1$-parameter $\bfp$ (as in Definition \ref{a2}) as follows:
\begin{enumerate}[$\bullet_1$]
    \item $(\lambda_\bfp,\kappa_\bfp) \defeq (\lambda,\kappa)$ from \ref{y2}(A)(a)-(b) (so $\lambda > \kappa$ are regular, and so \ref{a2}(1)(A)(a) holds).
\sn
    \item $\chi \defeq \min\{\sigma : 2^\sigma > \mu\}$ (so $\chi \leq \mu$).
\sn
    \item $S_\bfp \defeq S$, the stationary subset of $S_\kappa^\lambda$ from \ref{y2}(A)(b)\\ (so \ref{a2}(1)(A)(b) holds).
\sn
    \item $C_{\bfp,\delta}' \defeq C_\delta'$ from \ref{y2}(A)(c). They satisfy \ref{a2}(1)(C)(a), and so we let $\bar\beta_\delta =  \LL \beta_{\delta,i} : i < \kappa\RR$ list its elements as in \ref{a2}(1)(C)(b). 
\sn
    \item Let $C_{\bfp,\delta} \defeq \bigcup\limits_{i<\kappa} C_{\beta_{\delta,i}}^\bullet$, where $\LL C_\beta^\bullet : \beta < \lambda\RR$ was given in \ref{y2}(A)(d).
\sn
    \item If $\beta \in C_\delta' \wedge \otp(C_\delta' \cap \beta) = i$ \underline{then} we define $C_{\delta,i} \defeq C_\beta^\bullet$.\footnote{
        Note that this is well-defined, as the sequence 
        $\LL \otp(\beta \cap C) : \beta \in C\RR$ is strictly increasing for any set $C$ of ordinals.
    }
    \begin{enumerate}
        \item Clearly each $C_{\delta,i} \subseteq C_{\bfp,\delta} \cap \beta_{\delta,i}$, giving us \ref{a2}(1)(D)(b).
\sn
        \item Also, $|C_{\delta,i}| < \chi$ as $2^{|C_{\delta,i}|} \leq 2^\mu$ by \ref{y2}(A)(d), hence \ref{a2}(1)(D)(c) holds.
    \end{enumerate}
\sn
    \item $\clD_\bfp \defeq [\kappa]^\kappa$.
\end{enumerate}

Now,
\begin{enumerate}
    \item [$\boxplus_1$] $\bfp$ is indeed a $\BB^1$-parameter.
\end{enumerate}
[Why? The only demand we have not already checked off our list is clause \ref{a2}(1)(E). For a given $\alpha < \lambda$, the set defined there is {a singleton} $\{C_{\beta_{\delta,i}}^\bullet\}$, and so this easily holds.]

\mn
Let $D$ be the club filter on $\lambda$.
\begin{enumerate}
    \item [$\boxplus_2$] All the assumptions in Lemma \ref{a8} have been satisfied.
\end{enumerate}
Why? \ref{a8}(A)(a) holds by \ref{y2}(A). (We defined $\lambda \defeq \cf(2^\mu)$ in \ref{y2}(A)(b).)

Clause (A)(b) holds by our choice of $D$ above.

For (A)(c), 
$\chi \leq \mu$ by $\bullet_2$. 
Recalling $\theta = \theta^{2^\kappa}$ and $\theta < \mu$, clearly $\kappa < \chi$. As $\lambda \defeq \cf(2^\mu)$ ($> \mu$), we have $\lambda \geq \chi$. 

Clause (A)(d) demands that $\bfp$ is a $\BB^1$-parameter, which we have just proved.

For (A)(e), note that $\theta \leq \mu$ by \ref{y2}(A)(a). Let $\sigma < \chi$; necessarily, $2^\sigma \leq \mu$ by our choice of $\chi$, so as $\theta < \mu$ we have $\theta^\sigma \leq 2^\mu$. Therefore $\theta^{<\chi} \leq 2^\mu$ as required.

Lastly, we have to prove (A)(f):
$$
\Sep_3(\lambda;\mu,\mu,\partial,\theta,\kappa,\clD).
$$
[Why is this true? We shall prove it using \ref{x47}, 
recalling $\clD \defeq [\kappa]^\kappa$. Now we need to verify that demands \ref{x47}(A)-(D) hold. Clauses (A) and (D) were among our assumptions, and 
`$\bfU_\clD(\mu) = \mu$' holds because this is equivalent to 
$\bfU_\kappa(\mu) = \kappa$ (as $\clD \defeq [\kappa]^\kappa$), which holds because 
$\alpha < \mu \Rightarrow |\alpha|^\kappa \leq \mu$.

The last of the four clauses is \ref{x47}(B); if we have this, then we will be able to conclude $\Sep_3(\lambda;\mu,\mu,\partial,\theta,\kappa,\clD)$.

Flipping back to page \pageref{x47}, it says ``The triple $(\mu,2^\mu,\theta)$ satisfies at least one of the conditions in {\ref{x20}}.''
Of the options, \ref{x20}$(b)$ holds by \ref{y2}(A), and so we are done.]

\smallskip
We obtain the conclusion of \ref{x47}, giving us \ref{a8}(A)(f), 
and $\boxplus_2$ has been proven.

Now by the conclusion of Lemma \ref{a8}, $\bfp$ has the continuous $(D,2^\mu,\theta)$-$\BB^1$-property, and so by \ref{a28}, we know 
\begin{enumerate}
    \item [$\boxplus_3$] For some $A \in [\kappa]^\kappa$, $\bfp_{\!A}$ (as defined in \ref{a22}) has the continuous $(D,2^\mu,\theta)$-$\BB^1$-property.
\end{enumerate}

Now to finish the proof we need to provide a $\olsi C = \LL C_\delta : \delta \in S\RR$ satisfying \ref{y2}(B). Let us choose {$\LL C_\delta^A : \delta \in S\RR$, where}
$$
C_\delta^A \defeq \{\beta \in C_{\bfp,\delta} : \otp(C_{\bfp,\delta} \cap \beta) \in A\}.
$$ 
Now check.
\end{PROOF}

\mn
\begin{PROOF}[\textsc{Proof of \ref{y5}}]{\ref{y5}}
Similar.
\end{PROOF}

\newpage
\section{The DBB Property}\label{S3}

{The following result} relies on \cite[2.2\subref{d.6}]{Sh:898}. 
\begin{theorem}\label{b2}
We have $`(A) \Rightarrow (B)\text{\emph{'}}$, where
\begin{enumerate}[$(A)$]
    \item 
    \begin{enumerate}[$(a)$]
        \item $\lambda \defeq \min\{\partial : 2^\partial > 2^\mu\}$ (so $\lambda > \mu$ is regular).
\sn
        \item Let $D$ be a $\mu^+$-complete filter on $\lambda$ extending the co-bounded filter.
\sn
        \item $\olsi C = \LL C_\gamma : \gamma < \lambda\RR$, where $C_\gamma \subseteq \mu$.
\sn
        \item $\theta \in [2,\mu]$ 
\sn
        \item $\Sep_1(\mu,\mu,\theta,\theta,\Upsilon)$ for some $\Upsilon < 2^\mu$ (or possibly $\Upsilon \defeq 2^\mu \in \Reg$ as before).
    \end{enumerate}
\sn
    \item If $\bfF_\gamma : {}^{C_\gamma}(2^\mu) \to \theta$ 
    for $\gamma < \lambda$ \underline{then} we can find a 
    $$
    \bar c = \LL c_\gamma : \gamma < \lambda\RR \in {}^\lambda\theta
    $$ 
    such that for any 
    $f : \mu \to 2^\mu$, for $D^+$-many $\gamma < \lambda$, we have 
    $$
    \bfF_\gamma( f \rest C_\gamma) = c_\gamma.
    $$
\end{enumerate}
\end{theorem}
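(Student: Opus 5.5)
We plan to mimic the proof of Lemma \ref{a8}, with the $\Sep_1$ witness replacing $\Sep_3$. The role of $\lambda \defeq \min\{\partial : 2^\partial > 2^\mu\}$ is to supply the guessing: since $2^{<\lambda} = 2^\mu$, any family of $\lambda$-sequences is too large to be handled $\mu$-many at a time.

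\textbf{Setup.} Fix $\bfF_\gamma$ for $\gamma<\lambda$. Let $\bar f = \LL f_\eps : \eps<\mu\RR$ witness $\Sep_1(\mu,\mu,\theta,\theta,\Upsilon)$, so $f_\eps : {}^\mu\theta \to \theta$. For $\eps<\mu$ we propose the candidate
$$
\bar c_\eps = \LL c_{\eps,\gamma} : \gamma<\lambda\RR, \qquad c_{\eps,\gamma} \defeq f_\eps(\varrho_\gamma),
$$
where $\bar\varrho = \LL \varrho_\gamma : \gamma<\lambda\RR \in {}^\lambda({}^\mu\theta)$ is still to be chosen. The conclusion asks for a single $\bar c$, so we aim to show that for a suitable $\bar\varrho$ some $\bar c_\eps$ works.

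\textbf{The contradiction.} Suppose that for every $\eps<\mu$ there are $g_\eps : \mu\to 2^\mu$ and $E_\eps\in D$ with $\bfF_\gamma(g_\eps\rest C_\gamma)\neq c_{\eps,\gamma}$ for all $\gamma\in E_\eps$. Put
$$
\nu_\gamma \defeq \LL \bfF_\gamma(g_\eps\rest C_\gamma) : \eps<\mu\RR \in {}^\mu\theta .
$$
For $\gamma\in E \defeq \bigcap_\eps E_\eps$, which lies in $D$ by $\mu^+$-completeness, we have $f_\eps(\varrho_\gamma)\neq$ the $\eps$-th coordinate of $\nu_\gamma$ for all $\eps$. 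This is not yet the pattern of $\Sol_{\nu_\gamma}$: there the index $\eps$ is tied to both the function and the coordinate, which is fine here. So $\varrho_\gamma\in\Sol_{\nu_\gamma}$ for all $\gamma\in E$.

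\textbf{Choosing $\bar\varrho$ by counting.} The point is to choose $\bar\varrho$ so that this is impossible for every $\bar g=\LL g_\eps:\eps<\mu\RR$. Each $\bar g$ is essentially a member of ${}^\mu(2^\mu)$, so there are $2^\mu$ of them, and each $\bar g$ determines $\bar\nu^{\bar g}=\LL\nu^{\bar g}_\gamma : \gamma<\lambda\RR$.

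A diagonalization over $\bar g$ is hopeless, since $\lambda$ may be $<2^\mu$. Instead we use that $\bar\varrho$ ranges over $({}^\mu\theta)^\lambda$, a set of size $2^\lambda>2^\mu$. For a fixed $\bar g$, the set of bad $\bar\varrho$, namely those with $\varrho_\gamma\in\Sol_{\nu^{\bar g}_\gamma}$ on a $D$-large set, is small. It is contained in
$$
\bigcup_{E\in D}\ \prod_{\gamma\in E}\Sol_{\nu^{\bar g}_\gamma}\times({}^\mu\theta)^{\lambda\setminus E}.
$$
This bound is too crude, so we instead fix the values of $\varrho$ in advance on a bounded initial segment. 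Counting carefully: since $|\Sol|<\Upsilon$ with $\Upsilon\le 2^\mu$ (and regular), and $D$ contains the co-bounded filter, a bad $\bar\varrho$ is determined by an initial segment $\varrho\rest\alpha$ with $\alpha<\lambda$ together with choices from sets of size $<\Upsilon$.

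The $2^\mu$ values of $\bar g$ produce at most $2^\mu\cdot 2^{<\lambda}\cdot\Upsilon^\lambda$ bad sequences. The worry is $\Upsilon^\lambda$. Here is where $\lambda$'s minimality should enter. We instead treat $\bar\varrho$ as the union of $2^\mu$ instances along a tree: build $\varrho_\gamma$ by induction on $\gamma$. At stage $\gamma$ only $2^{|\gamma|}\cdot$ (relevant restrictions) $\le 2^\mu$ many $\nu_\gamma$ matter, provided $\nu_\gamma$ depends only on $\bar g\rest\gamma$-type data. This fails in general. The fallback is the weak-diamond-style argument of \cite[2.2\subref{d.6}]{Sh:898}: use $2^{<\lambda}=2^\mu<2^\lambda$ to get a weak-diamond-like guessing of the $\bar g$, then use $\Sep_1$ to turn two-valued guessing into $\theta$-valued guessing.

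\textbf{Main obstacle.} The decisive step is producing $\bar\varrho$, or directly $\bar c$, that beats all $2^\mu$ many $\bar g$ simultaneously on a $D^+$ set. I expect to quote \cite[2.2]{Sh:898} for the combinatorial core, a DBB-type theorem with $\lambda=\min\{\partial:2^\partial>2^\mu\}$ and two colours, and then lift it to $\theta$ colours via the $\Sep_1$ coding above, exactly as \ref{a8} lifts from \ref{x20}.
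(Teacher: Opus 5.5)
The paper's proof of \ref{b2} is only the citation \cite[2.2$(\beta)$]{Sh:898}, which the paper treats as giving the full $\theta$-coloured statement; that is why $\Sep_1(\mu,\mu,\theta,\theta,\Upsilon)$ is hypothesis (A)(e). You also lean on \cite{Sh:898}, but only for a two-coloured version, and then propose to ``lift it to $\theta$ colours via the $\Sep_1$ coding above, exactly as \ref{a8} lifts from \ref{x20}''. That lift is a genuine gap.

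Your reduction is correct: (B) follows from any $\bar\varrho\in{}^\lambda({}^\mu\theta)$ such that, for each of the $2^\mu$ sequences $\bar g=\LL g_\eps:\eps<\mu\RR$, the set $\{\gamma<\lambda:\varrho_\gamma\notin\Sol_{\nu^{\bar g}_\gamma}\}$ is in $D^+$. But producing such a $\bar\varrho$ is the whole content of the theorem, and the mechanism of \ref{a8} does not produce it. In \ref{a8}, $\Sep_3$ gives, for each $\delta$, a $\bar\varrho^\delta$ that works against every sequence from the piece $\cP_\delta$ of a filtration of length $\lambda$. This rests on the pieces having size $<2^\mu$, which is where $\lambda=\cf(2^\mu)$ enters (see \ref{x41}(2)). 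Continuity ($F_{\delta,i}$ reads $\eta$ only on $C_{\delta,i}\subseteq\beta_{\delta,i}<\delta$) then guarantees that for club many $\delta$ all relevant values already lie in $\cP_\delta$. In \ref{b2} there is no filtration and no continuity: every $\bfF_\gamma$ reads the same $f$ on a subset of $\mu$. So at each $\gamma$ all $2^\mu$ of the $\nu^{\bar g}_\gamma$ are relevant, and their $\Sol$-sets may cover ${}^\mu\theta$. You observed this yourself (``this fails in general''), and it is incompatible with your closing appeal to \ref{a8}.

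Nor can a two-coloured version carry the load. For $\theta=2$ and $C_\gamma=\mu$, (B) says exactly that $|{}^\lambda 2/D|>2^\mu$: choose the $\bfF_\gamma$ so that $f\mapsto\LL\bfF_\gamma(f):\gamma<\lambda\RR$ enumerates an arbitrary family of at most $2^\mu$ members of ${}^\lambda 2$. Your $\bar\varrho$, by contrast, must avoid, $D$-positively often, each of $2^\mu$ given sequences of sets of size $<\Upsilon$ in a space of size $2^\mu$. Nothing you give derives this $2^\mu$-valued avoidance principle from the two-valued one, and it is exactly where $\lambda=\min\{\partial:2^\partial>2^\mu\}$ and the $\mu^+$-completeness of $D$ must be exploited.

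Your counting attempts do not supply it. Besides the $\Upsilon^\lambda$ problem you point out, the claim that a bad $\bar\varrho$ is determined by a bounded initial segment plus small choices presupposes that $D$ is the co-bounded filter. But $D$ is only assumed to extend it; for the club filter, say, $\lambda\setminus E$ can have size $\lambda$. So the decisive step is missing. To close it, either cite \cite[2.2$(\beta)$]{Sh:898} for the full $\theta$-coloured statement, as the paper does, or give a genuine proof that the required $\bar\varrho$ exists.
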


\begin{PROOF}{\ref{b2}}
By \cite[2.2$(\beta)$\subref{d.6}]{Sh:898}.
\end{PROOF}

\mn
\begin{definition}\label{b5}
Suppose $\lambda = \cf(\lambda) > \mu \ge \kappa = \cf(\kappa)$ and $\mu_* \leq \mu^+$. 

\sn
1) We say that $\bfp$ is a $(\lambda,\lambda_*,\mu,\mu_*,\kappa)$-\emph{DBB-parameter}\footnote{
    DBB stands for \emph{Double Black Box.}
} 
\underline{when}:
\begin{enumerate}
    \item 
    \begin{enumerate}
        \item $\lambda \geq \lambda_* > \kappa$ are regular cardinals.
\sn
        \item $S \subseteq S_\kappa^\lambda$ is a stationary subset of $\lambda$.
    \end{enumerate}    
\sn    
    \item $\bfp$ consists of $\olsi C = \olsi C_0 = \LL C_\delta : \delta \in S\RR$ and $\olsi C_1 = \LL C_\gamma^\delta : \delta \in S,\ \gamma < \lambda_*\RR$ such that
    \begin{enumerate}
        \item $\LL C_\delta : \delta \in S\RR$ is as usual 
        (that is, $C_\delta \subseteq \delta = \sup (C_\delta)$ and $\otp(C_\delta) = \kappa$),
        but we add the demand
        $$
        \alpha \in C_\delta \Rightarrow \alpha > \mu \wedge \mu \divides \alpha.
        $$
        \item $C_\gamma^\delta \subseteq \bigcup\limits_{\alpha \in C_\delta} [\alpha,\alpha+\mu)$ such that 
        $$
        \big| C_\gamma^\delta \cap [\alpha,\alpha+\mu) \big| = 1
        $$
        for all $\delta \in S$, $\gamma < \lambda_*$, {and $\alpha \in C_\delta$}. (So $\otp(C_\gamma^\delta) = \kappa$.) 
\sn
        \item $\olsi C_1$ is a $\mu_*$-\emph{free sequence}.
        
        By this we mean: if $u \subseteq S  \times \lambda_*$ is of cardinality 
        $\leq \mu_*$, \underline{then} there exists some sequence 
        $\bar \beta = \LL \beta_\gamma^\delta : (\delta,\gamma) \in u\RR$ with
        $\beta_\gamma^\delta \in C_\gamma^\delta$ such that
        $$
        \LL C_\gamma^\delta \setminus \beta_\gamma^\delta : (\delta,\gamma) \in u\RR
        $$
        is a sequence of pairwise disjoint sets.
    \end{enumerate}
\end{enumerate}

\sn
2) We say that $\bfp$ has the $(\lambda,\lambda_*,\mu,\mu_*,\theta,\kappa)$-\emph{DBB-property} \underline{when} in addition to the above, 
\begin{enumerate}
    \item [(C)]
    If $\bfF_\gamma^\delta : {}^{C_\gamma^\delta}(2^\mu) \to \theta$ 
    for $\gamma < \lambda_*$ and $\delta \in S$, \underline{then} we can find sequences
    $$
    \bar c^\delta = \LL c_\gamma^\delta : \gamma < \lambda_*\RR \in {}^{\lambda_*}\theta
    $$ 
    such that for any $\delta \in S$ and 
    $f : \delta \to 2^\mu$, for some $\gamma < \lambda_*$, we have 
    $$
    \bfF_\gamma^\delta( f \rest C_\gamma^\delta) = c_\gamma^\delta.
    $$
\end{enumerate}

\sn
3) If we say $\bfp$ \emph{guesses clubs}, we mean $\olsi C_\bfp$ does.

If $\lambda_* \defeq \lambda$ then we may omit it. Similarly if {$\mu_* \defeq \mu^+$.}
\end{definition}

\mn
\begin{remark}
1) In Definition \ref{b5}, we can make the following changes:
\begin{enumerate}[$\bullet_1$]
    \item In clause (1)(B), we add
    \begin{enumerate}
        \item [(B)(d)] $\olsi D = \LL D_\delta : \delta \in S\RR$, with each $D_\delta$ a filter on $\lambda_*$.
    \end{enumerate}
\sn
    \item Then in clause (2)(C), we replace ``for some $\gamma < \lambda_*$" by `for $D_\delta$-many $\gamma < \lambda_*$.'
\end{enumerate}

\mn
2) Adopting this change, we would add an additional clause to Claim \ref{b8}(1)$\boxplus$:
\begin{enumerate}
    \item [$\boxplus(f)$]
    \begin{enumerate}[$\bullet_1$]
        \item $\olsi D = \LL D_\delta : \delta \in S\RR$, with each $D_\delta$ a filter on $\lambda_*$.
\sn
        \item $\lambda_\bullet \in D_\delta$
\sn
        \item $D_\delta \rest \lambda_\bullet$ is a $\mu^+$-complete filter on $\lambda_\bullet$ extending the co-bounded filter.
    \end{enumerate}
\end{enumerate}

\mn
3) The proof of \ref{b8} would not change.
\end{remark}

\bn
\begin{claim}\label{b8}
$1)$ If $\boxplus$ below holds, \underline{then} there exists a $\bfp$ with the $(\lambda,\lambda_*,\mu,\mu_*,\theta,\kappa)$-DBB-property.  
\begin{enumerate}
    \item [$\boxplus$]
    \begin{enumerate}[$(a)$]
        \item $\kappa = \cf(\mu) < \mu$
\sn
        \item $\lambda_\bullet \leq \lambda_* \leq \lambda$, where $\lambda$ is regular and $\lambda_\bullet \defeq \min\{\partial : 2^\partial > 2^\mu\}$.
\sn
        \item
        \begin{itemize}
            \item[$\bullet_1$] $\pp_{\!J_\kappa^\bd}(\mu) > \lambda$ and {$\mu_* \defeq \mu^+$,}
        \end{itemize}
        \emph{\underline{or}}
        \begin{itemize}
            \item[$\bullet_2$] $\mu_* \leq \mu$ and there exists a $\mu_*$-free subset of ${}^\kappa\mu$ of cardinality $\lambda$ (see \emph{\S4}).
        \end{itemize}
\sn
        \item $\theta \in [2,\mu]$
\sn
        \item $\Sep_1(\mu,\theta)$.
    \end{enumerate}
\end{enumerate}

\sn
\emph{1A)} If $(1)\boxplus$ holds and $S \subseteq\{\delta < \lambda : \cf(\delta) = \kappa \text{ and } \mu^2 \divides \delta\}$ is stationary in $\lambda$, \underline{then} there exists a $\bfp$ with the $(\lambda,\lambda_*,\mu,\mu_*,\theta,\kappa)$-DBB-property and $S_\bfp = S$.

\sn
\emph{1B)} If we assume $S \in \check I_\kappa[\lambda]$ then\footnote{
    Recall that such an $S$ exists because $\lambda$ and $\kappa$ are regular with $\lambda > \kappa^+$ (as $\lambda > \mu > \kappa$), and so we can apply \cite[\S1]{Sh:420}.
} 
we can add ``$\olsi C_\bfp$ is $(\lambda,\kappa)$-good" (see \emph{\ref{x5}(6)}) and ``$S_\bfp \defeq S$." 

\sn
\emph{1C)} Moreover, in \emph{(1B)} we could replace `$S_\bfp \defeq S$' by `$S_\bfp \defeq S \cap E$ for some club $E \subseteq \lambda$' and add `if $\beta \in C_{\bfp,\delta_1} \cap C_{\bfp,\delta_2}$ then 
$C_{\bfp,\delta_1} \cap \beta = C_{\bfp,\delta_2} \cap \beta$.'

\sn
\emph{1D)} In both \emph{(1A)} and \emph{(1B)}, we may add $``\olsi C_\bfp$ guesses clubs.\emph{"}

\sn
\emph{1E)} Note that if $\mu$ is (e.g.) strong limit singular, then $(1)\,{\boxplus}(c)$ holds even if $\cf(\mu) = \aleph_0$.

\sn
$2)$ In part $(1)$, we may replace clause $\boxplus\,(c)$ by
\begin{enumerate}
    \item [$(c)'$]
    \begin{enumerate}[$\bullet_1$]
        \item $\pp_J(\mu) > \lambda$ for some ideal $J \supseteq [\kappa]^{<\kappa}$.
\sn
        \item $\mu = \mu^{<\kappa}$
    \end{enumerate}
\end{enumerate}

\sn
\emph{2A)} We can adopt $\bullet_1$ above if we weaken clause \emph{\ref{b5}(1)(B)(c)} to $``(\mu_*,J)$-free.\emph{"}

\sn
$3)$ Alternatively,
\begin{enumerate}
    \item [$(c)''$]
    \begin{enumerate}[$\bullet_1$]
        \item As above.
\sn
        \item $2^{\mu^{<\kappa}} < 2^\mu$.
    \end{enumerate}
\end{enumerate}
\end{claim}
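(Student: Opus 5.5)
The plan is to build the DBB-parameter in two layers: the outer sequence $\olsi C_0$ is an arbitrary ladder system, and the inner sequence $\olsi C_1$ is obtained by placing, in each block $[\alpha,\alpha+\mu)$ with $\alpha\in C_\delta$, the points of a member of a suitable free family of functions in ${}^\kappa\mu$. The prediction property (C) will then come from Theorem \ref{b2}.

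\textbf{Step 1 (free family).} Obtain a sequence $\LL \nu_\gamma : \gamma < \lambda\RR$ of pairwise distinct members of ${}^\kappa\mu$ which is $\mu_*$-free in the sense of Definition \ref{yfree}.
\begin{itemize}
\item Under $\boxplus(c)\bullet_2$ this is exactly the hypothesis.
\item Under $\bullet_1$, $\pp_{J_\kappa^\bd}(\mu) > \lambda$ gives an increasing sequence of regulars $\LL \mu_i : i<\kappa\RR$ cofinal in $\mu$ and a ${<_{J_\kappa^\bd}}$-increasing cofinal sequence $\LL \nu_\gamma : \gamma<\lambda\RR$ in $\prod_i \mu_i$, with $\tcf = \lambda$.
\end{itemize}
For $\bullet_1$ we still need $\mu^+$-freeness. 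The standard argument is induction on $|u|\le\mu$, using $\mu$-freeness at smaller cardinals. At points of cofinality $\le\mu$ we refine to a continuous (good) subsequence, using that for $\lambda>\mu^+$ there is a club of good points of each cofinality $<\mu$. This trimming/induction is the step I expect to be the main obstacle. Everything else is bookkeeping.

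Since only $\lambda_*\le\lambda$ many indices are needed, we take $\LL \nu_\gamma : \gamma<\lambda_*\RR$.

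\textbf{Step 2 (the parameter).} Given stationary $S\subseteq S_\kappa^\lambda$ with $\mu^2\divides\delta$, which can be shrunk as needed, proceed as follows.
\begin{itemize}
\item Choose $C_\delta\subseteq\delta$ unbounded of order type $\kappa$ consisting of multiples of $\mu$ above $\mu$. For (1B)/(1C) take it from a witness of $S\in\check I_\kappa[\lambda]$ via Claim \ref{y23}; for (1D) take it from club guessing as in Claim \ref{x6}.
\item Let $\alpha_{\delta,i}$ list $C_\delta$ increasingly, and set $C_\gamma^\delta \defeq \{\alpha_{\delta,i}+\nu_\gamma(i) : i<\kappa\}$.
\end{itemize}
Clause (B)(b) is immediate.

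For freeness, let $u\subseteq S\times\lambda_*$ with $|u|\le\mu_*$. Group $u$ by $\delta$. Distinct $\delta$'s have ladders $C_{\delta}$, $C_{\delta'}$ which meet only in a bounded set (both have order type $\kappa$ with distinct sups), but there may be up to $\mu_*$ many distinct $\delta$'s, so we cannot simply cut each ladder.

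To handle this, first make $\olsi C_0$ itself $\mu^+$-free. Either build in a disjointness at tails (e.g.\ via the tree-like choice in (1C)), or use an elementary-chain argument: order the $\delta$'s and choose cut points inductively by cofinality considerations, as $\kappa<\mu$. Then within one $\delta$ use freeness of $\{\nu_\gamma\}$ to separate the $\gamma$'s.

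\textbf{Step 3 (black box).} Fix colorings $\bfF_\gamma^\delta$ and handle each $\delta$ separately.
\begin{itemize}
\item Transport each $f:\delta\to 2^\mu$ to $f_\delta\in{}^{\mu}(2^\mu)$ by reading $f$ on $\bigcup_i[\alpha_{\delta,i},\alpha_{\delta,i}+\mu)$ via a fixed bijection of $\kappa\times\mu$ with $\mu$.
\item The sets $C_\gamma^\delta$ correspond to sets $C_\gamma\subseteq\mu$.
\item Apply Theorem \ref{b2}, with its $\lambda$ being $\lambda_\bullet$, $D$ the co-bounded filter on $\lambda_\bullet$, and $\Sep_1(\mu,\theta)$ from $\boxplus(e)$. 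This gives $\LL c_\gamma^\delta:\gamma<\lambda_\bullet\RR$ such that some (indeed $D^+$-many) $\gamma<\lambda_\bullet\le\lambda_*$ predict correctly.
\item Extend $c_\gamma^\delta$ arbitrarily for $\gamma\ge\lambda_\bullet$.
\end{itemize}
This yields (C), and also the $D_\delta$-version of the remark.

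Parts (2), (2A) and (3) follow by the same construction, taking $\nu_\gamma$ from $\pp_J$ and using $\mu^{<\kappa}=\mu$ (or $2^{\mu^{<\kappa}}<2^\mu$) to get $J$-freeness. Part (1E) is the observation that for strong limit singular $\mu$, $\pp(\mu)=2^\mu\ge\lambda$ suffices for $\bullet_1$.
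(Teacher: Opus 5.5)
\textbf{The gap: freeness of $\olsi C_1$ across different $\delta$.} The gap is in Step~2, and it starts in Step~1. There you index the free family only by $\gamma<\lambda_*$ and reuse the same $\nu_\gamma$ for every $\delta$. Suppose a block $[\alpha,\alpha+\mu)$ occurs at the same index $i$ in both $C_\delta$ and $C_{\delta'}$. Then $C^\delta_\gamma$ and $C^{\delta'}_\gamma$ both contain the point $\alpha+\nu_\gamma(i)$. So freeness of a family $\{(\delta,\gamma_0):\delta\in u\}$ rests entirely on the ladders, and you never actually produce a $\mu^+$-free $\olsi C_0$.

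The routes you suggest do not give one:
\begin{itemize}
\item Cutting $C_{\delta_\xi}$ above $\bigcup_{\zeta<\xi}(C_{\delta_\xi}\cap C_{\delta_\zeta})$ works only while $|\xi|<\kappa$. A union of $\geq\kappa$ bounded subsets of $\delta_\xi$ may be unbounded, so $\kappa<\mu$ is the obstacle, not the remedy.
\item The coherent ladders of (1C) give shared initial segments, not disjoint tails.
\end{itemize}
The danger is real. Suppose $S$ contains $\delta_\zeta\defeq\mu^2\cdot\zeta$ for all $\zeta\in S^{\kappa^+}_\kappa$, as (1A) permits. Take $C_{\delta_\zeta}=\{\mu^2\cdot\zeta_i:i<\kappa\}$ with $\LL\zeta_i:i<\kappa\RR$ increasing and cofinal in $\zeta$. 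Fix any choice of cut points. The cut lies in a block $\mu^2\cdot\xi$ with $\xi<\zeta$, at some index $i<\kappa$. Fodor's lemma on $\kappa^+$ gives $\zeta\neq\zeta'$ with the same pair $(\xi^*,i^*)$. Then both tails contain $\mu^2\cdot\xi^*+\nu_{\gamma_0}(i^*)$, so these $\kappa^+<\mu^+$ pairs are not free.

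\textbf{What the paper does instead.} The missing idea is to re-index the free family. Since $|S\times\lambda|=\lambda$, the paper lists it as $\LL\rho^*_{\delta,\gamma}:\delta\in S,\ \gamma<\lambda\RR$, so every \emph{pair} $(\delta,\gamma)$ gets its own function. It normalizes so that $\rho(i)\equiv i \pmod{\kappa}$ and sets $C^\delta_\gamma\defeq\{\beta^\delta_i+\rho^*_{\delta,\gamma}(i):i<\kappa\}$. Ladder points are multiples of $\mu$, so a common point of $C^{\delta_1}_{\gamma_1}$ and $C^{\delta_2}_{\gamma_2}$ lies in a common block. It therefore forces $\rho^*_{\delta_1,\gamma_1}(i_1)=\rho^*_{\delta_2,\gamma_2}(i_2)$, hence $i_1=i_2$, which freeness rules out above a cut function $h:u\to\kappa$. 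No property of $\olsi C_0$ is used. This is what lets (1A)--(1D) take an arbitrary stationary $S$ and ladders that are good or guess clubs.

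Your Step~3 is fine and is the paper's argument: Theorem~\ref{b2} is applied for each $\delta$ separately to the traces $C'_{\delta,\gamma}\subseteq\mu$, whatever these sets are.

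Your (1E) argument is too quick. $\bullet_1$ asks for $\pp_{J^{\bd}_\kappa}(\mu)>\lambda$, strictly and for the specific ideal $J^{\bd}_\kappa$. Getting this from $\pp(\mu)=2^\mu$ is exactly what was not known for all $\mu$ of countable cofinality. The paper instead gets the needed $\mu^+$-free family from \S4 (Claim~\ref{d26}).
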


\sn
\begin{remark}\label{b9}
1) Concerning the Double Black Box property in Definition \ref{b5}(2), we may allow $\bfF_\gamma^\delta$ to have domain ${}^{\clC_\gamma^\delta}(2^\mu)$, where
$\clC_\gamma^\delta \defeq \bigcup\limits_{\beta \in C_\gamma^\delta} [\beta,\beta+\mu)$, and range $\subseteq \theta$.
(Alternatively, we may use $\clC_\gamma^\delta \defeq \big\{ \beta : (\exists \beta' \in C_\gamma^\delta)[ \beta + \mu = \beta' + \mu] \big\}.$)

\noindent
[Why? Because $\big|{}^\mu(2^\mu) \big| = 2^\mu$.]

\mn
2) Assume $\lambda \defeq 2^\mu \notin \Reg$. Then the proof still works for a weaker version of the DBB property, where `$\lambda$ is regular' (in \ref{b5}(1)(A)(a)) is replaced by `$\cf(\lambda) > \kappa$.' (Of course, we still demand that $\lambda_*$ and $\kappa$ are regular.)

\mn
3) By \S4, if $\mu$ is strong limit singular above then we can have $\mu_* \defeq \mu^+$.
\end{remark}

\sn
\begin{PROOF}{\ref{b8}}
1) First, choose a stationary $S \subseteq S_\kappa^\lambda$ such that
$$
\delta \in S \Rightarrow \mu^2 \divides \delta.
$$

Next, choose 
$\olsi C$ as in \ref{b5}(1)(B)(a); this is possible by our choice of $S$. 
Third, by $\boxplus$, choose a $\mu_*^+$-free sequence
$$
\LL\rho_\gamma : \gamma < \lambda\RR \subseteq {}^\kappa\!\mu.
$$ 
[Why can we do this? In subclause $\boxplus(c)\bullet_1$, this follows from \cite[Ch.II, \S3]{Sh:g}; if $\bullet_2$ holds then this is obvious.]

\medskip
Without loss of generality $\gamma < \lambda \Rightarrow \rho_\gamma(i) = i \mod \kappa$. 
Let $\LL \rho_{\delta,\gamma}^* : \delta \in S,\ \gamma < \lambda\RR$ list $\LL\rho_\gamma : \gamma < \lambda\RR$ without repetition: we can do this because $|S \times \lambda| = \lambda$.

Let $\LL \beta_i^\delta : i < \kappa\RR$ list $C_\delta$ in increasing order, and let 
\begin{itemize}
    \item $\rho_{\delta,\gamma} \defeq \LL \beta_i^\delta + \rho_{\delta,\gamma}^*(i) : i < \kappa\RR$
\sn
    \item $C_\gamma^\delta \defeq \rang(\rho_{\delta,\gamma})$.
\end{itemize}
Let $D$ be the club filter on $\lambda$. 
So
$$
\bfp \defeq (\lambda,\kappa,S,D,\LL C_\delta : \delta \in S\RR, \LL C_\gamma^\delta : \delta \in S,\ \gamma < \lambda\RR)
$$
is well-defined. 

Next we have to check that $\bfp$ is indeed a $(\lambda,\lambda_*,\mu,\mu_*,\theta,\kappa)$-DBB-parameter: that is, all clauses of \ref{b5}(1).

First, the demands on the cardinals in the beginning of the definition hold, as does clause (A).

\sn
\textbf{Clause \ref{b5}(B)(a):} Holds by the choice of $\LL C_\delta : \delta \in S\RR$.

\sn
\textbf{Clause (B)(b):} Holds by our choice of the $C_\gamma^\delta$-s.

\sn
\textbf{Clause (B)(c):} 


Let $u \in [S\times \lambda]^{<\mu_*}$. By the choice of 
$\LL \rho_\gamma : \gamma < \lambda\RR$, we can find a function 
$h : u \to \kappa$ such that
\begin{enumerate}
    \item [$(*)_1$] If $(\delta_1,\gamma_1) \neq (\delta_2,\gamma_2)$ are from $u$ and $i \geq \max\!\big( h(\delta_1,\gamma_1),h(\delta_2,\gamma_2)\big)$, \underline{then} 
    $\rho_{\delta_1,\gamma_1}^*(i) \neq \rho_{\delta_2,\gamma_2}^*(i)$.
\end{enumerate}
Hence
\begin{enumerate}
    \item [$(*)_2$] If $(\delta_1,\gamma_1) \neq (\delta_2,\gamma_2)$ are from $u$ and $i_\ell \in \big[h(\delta_\ell,\gamma_\ell),\kappa\big)$ for $\ell = 1,2$, \underline{then} 
    $$
    \rho_{\delta_1,\gamma_1}^*\!(i_1) \neq \rho_{\delta_2,\gamma_2}^*(i_2).
    $$
\end{enumerate}
[Why? If $i_1 \neq i_2$ then recall $\gamma < \lambda \Rightarrow \rho_\gamma(i) \equiv i \mod \kappa$. The $i_1 = i_2$ case is just $(*)_1$.]

So clause (B)(d) does indeed hold.
Together we have proved that $\bfp$ is a\\ $(\lambda,\lambda_*,\mu,\mu_*,\theta,\kappa)$-DBB-parameter.

Lastly, why does it have the $(\lambda,\lambda_*,\mu,\mu_*,\theta,\kappa)$-DBB-property? 

Let $\LL \bfF_\gamma^\delta : \gamma < \lambda_*,\ \delta \in S\RR$ be given, as in \ref{b5}(2)(C). To finish we need to produce $\bar c_\delta = \LL c_\gamma^\delta : \gamma < \lambda_*\RR$ as required there.

Fix $\delta \in S$, and let 
$h_\delta : \mu \to \delta$ be such that
$$
\eps < \mu \wedge \eps \equiv i\ \mathrm{mod}\ \kappa \Rightarrow h_\delta(\eps) \defeq \beta_{\delta,i} + \eps.
$$
As $\LL \beta_{\delta,i} : i < \kappa\RR$ is increasing and $\mu \divides \beta_{\delta,i}$ for all $i < \kappa$, we know 
$h_\delta$ is well-defined and one-to-one.

Let $\olsi C_\delta' = \LL C_{\delta,\gamma}' : \gamma < \lambda_\bullet\RR$ be defined as
$$
C_{\delta,\gamma}' \defeq \big\{\eps < \mu : h_\delta(\eps)\in C_\gamma^\delta \big\}.
$$

For $\gamma < \lambda_\bullet$, define $F_{\delta,\gamma} : {}^{C_{\delta,\gamma}'}(2^\mu) \to \theta$ as the function
$$
f \mapsto \bfF_\gamma^\delta (f \circ h_\delta).
$$
By \ref{b2} applied to $\olsi C_\delta'$ and 
$\LL F_{\delta,\gamma} : \gamma < \lambda_\bullet\RR$, we get a sequence of ordinals 
$\LL c_\delta^\gamma : \gamma < {\lambda_\bullet}\RR$ as guaranteed there.

As $\lambda_*$ may be any cardinal in the interval $[\lambda_\bullet,\lambda]$, we need to pad out this sequence with extra terms. Simply let $c_\delta^\gamma \defeq 0$ {(and $F_{\delta,\gamma}$ be identically zero)} for $\gamma \in [\lambda_\bullet,\lambda_*)$, {and we reader may check that the resulting sequence is as desired.}

\mn
1A) Similarly.

\sn
1B-C) By the definition of $\check I_\kappa[\lambda]$ and Claim \ref{y23}.

\sn
1D) Use \cite[Ch.III,\,\S1]{Sh:g} and \ref{y23}(3).

\sn
1E) By \S4.

\sn
2) Similarly, but when choosing $\bar\rho = \LL\rho_\gamma : \gamma < \lambda\RR$ we only require that it is $(\mu^+,J)$-free.

Then we let $\cd : {}^{\kappa>}\!\lambda \to \lambda$ be a bijection.

\sn
3) Similarly as well.
\end{PROOF}

\mn
\begin{discussion}\label{b11}
Let $\mu$ be strong limit singular of cofinality $\kappa < \mu$, and $\lambda = \lambda_* \defeq \min\{\partial : 2^\partial > 2^\mu\}$. 
\begin{enumerate}
    \item 
    \begin{enumerate}
        \item If $\lambda < 2^\mu$ and $\kappa > \aleph_0$, then \ref{b8}\,$\boxplus$ holds (see \cite{Sh:g}).
\sn
        \item What about $\lambda < 2^\mu$ and $\kappa \defeq \aleph_0$? Still, we knew \ref{b8}\,$\boxplus$ held in many cases
        (e.g.~for a club of $\mu < \mu_*$, where $\mu_* \defeq \beth_\delta > \kappa \defeq \cf(\mu_*) > \aleph_0$). In \S4 we shall prove that it \emph{always} holds.
\sn
        \item \ref{b8} would seem to be helpful for constructing (e.g.) $\mu^+$-free Abelian groups.
    \end{enumerate}
\sn
    \item But what about the $\lambda = \lambda_* = 2^\mu$ case? In this case we have $\lambda = \lambda^{<\lambda}$, a condition which is again helpful in constructions. Can we construct an entangled linear order of cardinality $\lambda^+$? Recall that by \cite{Sh:460} or \cite{Sh:829} we have $(D\ell)_\Lambda^{\!*}$. Can we use several pairwise disjoint subsets of $\lambda$?

    Alternatively, find a subset of ${}^\lambda\theta$ for some regular 
    $\theta$ (e.g.~$\cf(2^{\aleph_0})$)?
\sn
    \item Again, if $\lambda = 2^\mu$ then we may try to use 
    $$
    \gd \defeq \big\{ \theta \in \mu \cap \Reg : \big( \exists \mu' \in (\mu,\lambda) \big) \big[\cf(\mu') = \theta \wedge \pp_{\theta\text{-comp}}(\mu') =^+ \lambda \big] \big\}
    $$
    as in \cite{Sh:898} whenever $\gd \defeq \{\kappa\}$ does not work. The new proof is as in \cite{Sh:1028}, using \cite{Sh:1008}.
\sn
    \item However, we {can} use $\BB_k$ in clause (C). We consider 
    $\mu_0 < \ldots < \mu_{3n}$ as above (i.e.~all strong limit of 
    cofinality $\kappa < \mu_0$). For each $\ell$ we choose $\bfp_\ell$ as in \ref{b8}, except that their free-ness (in the sense of \cite{Sh:1028}) is such that their {``product"} is 
    $\aleph_{n\cdot\kappa^+}$-free, and they have a Black Box as there.
\end{enumerate}
\end{discussion}

\mn
\begin{definition}\label{b16}
1) For $\Lambda_* \subseteq \Lambda_\bullet \subseteq {}^\kappa\!\mu$, we say that $\Lambda_\bullet$ is \emph{$(\theta_2,\theta_1)$-free over}\footnote{
    We may omit $\lambda_*$ if it is empty.
} 
$\Lambda_*$ \underline{when} $\theta_2 \geq \theta_1$ and for every $\Lambda \subseteq \Lambda_\bullet \setminus \Lambda_*$ of cardinality $< \theta_2$ there is a \emph{witness} $(\bar\Lambda,h)$. By this we mean:
\begin{enumerate}
    \item $\bar\Lambda = \LL \Lambda_\gamma : \gamma < \gamma_*\RR$ is a partition of $\Lambda$ into $\gamma_*$-many sets, each of cardinality $< \theta_1$ 
    (so $\gamma_*$ is an ordinal $< \theta_2$).
\sn
    \item $h : \Lambda \to \kappa$.
\sn
    \item If $\gamma < \gamma_*$, $\eta \in \Lambda_\gamma$, and $i \in \big[ h(\eta),\kappa \big)$, \underline{then} 
    $$
    \eta(i) \notin \big\{\rho(j) : j < \kappa,\ \rho \in \textstyle\bigcup\limits_{\beta<\gamma} \Lambda_\beta \cup \Lambda_* \big\}.
    $$
\end{enumerate}

\sn
2) For $\Omega \subseteq \big\{ (\theta_2,\theta_1) \in \Card \times \Card : \theta_2 \geq \theta_1\big\}$, we say $\Lambda_\bullet$ is $\Omega$-free over $\Lambda_*$ \underline{when} it is $(\theta_2,\theta_1)$-free over $\Lambda_*$ for every $(\theta_2,\theta_1) \in \Omega$.

\mn
3) Suppose $\lambda = \cf(\lambda) > \mu \ge \kappa = \cf(\kappa)$ and $\Omega$ is as in part (2).

We say that $\bfp$ is a $(\lambda,\lambda_*,\mu,\Omega,\kappa)$-\emph{DBB-parameter} \underline{when} clauses (A) and (B)(a)-(b) of Definition \ref{b5} hold, and
\begin{enumerate}
    \item [(B)(c)$'$] $\olsi C_1$ is $\Omega$-free.
\end{enumerate}
\end{definition}

\mn
\begin{observation}\label{b13}
Assume (for transparency) that $\Lambda_\bullet \subseteq {}^{\kappa>}\!\mu$ is tree-like. (That is, $\eta \neq \nu \in \Lambda_\bullet \wedge \eta(i) = \nu(j)\ \Rightarrow\ i = j \wedge \eta \rest i = \nu \rest i$.)
 
If $\Lambda_\bullet$ is of cardinality $< \theta$ and $(\theta,\kappa^+)$-free over $\varnothing$, \underline{then} $\Lambda_\bullet$ is free.
\end{observation}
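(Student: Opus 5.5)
The plan is to apply the freeness hypothesis once, to all of $\Lambda_\bullet$. Then we refine the resulting witness inside each of its (small) pieces, using the tree-like structure. We read the members of $\Lambda_\bullet$ as sequences of length $\kappa$, as in Definition \ref{yfree}, which is what ``free'' refers to.

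\emph{Step 1: the outer witness.} Since $|\Lambda_\bullet| < \theta$ and $\Lambda_\bullet$ is $(\theta,\kappa^+)$-free over $\varnothing$, Definition \ref{b16}(1) applied to $\Lambda \defeq \Lambda_\bullet$ gives a witness $(\bar\Lambda,h)$. Here $\bar\Lambda = \LL \Lambda_\gamma : \gamma < \gamma_*\RR$ partitions $\Lambda_\bullet$ into pieces of cardinality $\leq \kappa$. Moreover, if $\eta \in \Lambda_\gamma$ and $i \geq h(\eta)$, then $\eta(i) \notin \rang(\rho)$ for every $\rho \in \bigcup_{\beta<\gamma}\Lambda_\beta$. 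So across different pieces, the ``later'' sequence has a tail avoiding \emph{all} values of the earlier one.

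\emph{Step 2: the key consequence of tree-likeness.} For $\eta \neq \nu$ in $\Lambda_\bullet$, let $d(\eta,\nu) < \kappa$ be the least index where they differ. We claim that if $i \geq d(\eta,\nu)$ then $\eta(i) \notin \rang(\nu)$. Indeed, suppose $\eta(i) = \nu(j)$. Tree-likeness forces $i = j$ and $\eta \rest i = \nu \rest i$, which implies $i < d(\eta,\nu)$. So beyond the splitting point, $\eta$ avoids the whole range of $\nu$, and symmetrically $\nu$ avoids the whole range of $\eta$.

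\emph{Step 3: the inner refinement and the final function.} Fix $\gamma < \gamma_*$ and enumerate $\Lambda_\gamma$ as $\LL \eta^\gamma_\alpha : \alpha < \alpha_\gamma\RR$ with $\alpha_\gamma \leq \kappa$. Put
$$
g(\eta^\gamma_\alpha) \defeq \sup\{ d(\eta^\gamma_\alpha,\eta^\gamma_\beta)+1 : \beta < \alpha\}.
$$
This is $< \kappa$ because $|\alpha| < \kappa$ and $\kappa$ is regular. This regularity step is the only place the bound ``pieces of size $<\kappa^+$'' is used, and it is the one point to watch. Finally let $f(\eta) \defeq \max\{h(\eta),g(\eta)\}$, and order $\Lambda_\bullet$ lexicographically by (piece index, index within the piece).

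\emph{Step 4: verification.} Take $\eta \neq \nu$ and assume $\nu$ precedes $\eta$ in this order. If they lie in different pieces, Step 1 gives $\{\eta(i) : i \geq f(\eta)\} \cap \rang(\nu) = \varnothing$. If they lie in the same piece, then $f(\eta) \geq g(\eta) > d(\eta,\nu)$, and Step 2 gives the same conclusion. In either case the tails $\{\eta(i) : i \geq f(\eta)\}$ and $\{\nu(i) : i \geq f(\nu)\}$ are disjoint. Hence $f$ witnesses that $\Lambda_\bullet$ is free.
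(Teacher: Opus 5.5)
Your proof is correct and complete. The paper gives no argument for this observation in the text; it only points to \cite[\S1]{Sh:1028}. So you have not taken a different route so much as supplied a self-contained proof where the paper gives none.

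Your write-up shows what each hypothesis does:
\begin{itemize}
\item The bound $|\Lambda_\bullet|<\theta$ lets you invoke freeness once, for the whole set.
\item The outer witness $(\bar\Lambda,h)$ is needed only to separate pairs lying in different pieces.
\item Tree-likeness confines any overlap of the ranges of $\eta\neq\nu$ to indices below $d(\eta,\nu)$. Here you also use that $i=j$ gives $\eta(i)=\nu(i)$, which is what yields the strict inequality $i<d(\eta,\nu)$.
\item The piece-size bound $<\kappa^+$, together with regularity of $\kappa$, keeps your inner function $g$ below $\kappa$.
\end{itemize}

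You silently adopt two readings, and both are the correct ones; they are worth stating.
\begin{itemize}
\item The ${}^{\kappa>}\!\mu$ in the statement must be read as ${}^{\kappa}\mu$. Both Definition \ref{b16} and the notion of freeness in Definition \ref{yfree} concern sequences of length $\kappa$.
\item $\kappa$ must be regular, as it is throughout the ambient context (cf.\ \ref{b16}(3)).
\end{itemize}

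The regularity assumption is not a formality. Every family of cardinality $\le\kappa$ is trivially $(\theta,\kappa^+)$-free over $\varnothing$: use the one-piece partition, for which clause (C) is vacuous. So the observation contains the assertion that every tree-like family of size $\le\kappa$ is free, and this fails for singular $\kappa$ of countable cofinality. For example:
\begin{itemize}
\item Fix $\lambda_n$ ($n<\omega$) increasing and cofinal in $\kappa$, with $\lambda_0=0$.
\item For $x\in{}^\omega 2$, let $\eta_x(i)$ injectively code the pair $(i,x\rest(n+1))$ when $i\in[\lambda_n,\lambda_{n+1})$.
\item These $\eta_x$ are tree-like.
\item Yet no $\aleph_1$ of them form a free family. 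Given any $f$, some $n$ has uncountably many $x$ with $f(\eta_x)<\lambda_n$. Two of these, $x\neq y$, agree on $x\rest(n+1)$, so $\eta_x$ and $\eta_y$ share the value at index $\max(f(\eta_x),f(\eta_y))$, which lies in both tails.
\end{itemize}
So the step you flagged is exactly where both your argument and the statement itself depend on regularity of $\kappa$.
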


\begin{PROOF}{\ref{b13}}
See \cite[\S1]{Sh:1028}.
\end{PROOF}

\mn
\begin{claim}\label{b20}
$1)$ If $\boxplus$ holds \underline{then} there exists a $\bfp$ with the $(\lambda,\lambda,\mu,\Omega,\theta,\kappa)$-DBB-property, where
\begin{enumerate}
    \item [$\boxplus$]
    \begin{enumerate}[$(a)$]
        \item $\kappa = \cf(\mu) < \mu$
\sn
        \item $\lambda = \lambda^{<\lambda} = 2^\mu$
\sn
        \item $\pp_{\!J_\kappa^\bd}^+(\mu) > \lambda$
\sn
        \item $\theta \in [2,\mu]$
\sn
        \item $\Sep(\mu,\theta,\Upsilon)$ for some $\Upsilon \leq \mu$.
\sn
        \item $\Omega \defeq \{(\kappa^{+\kappa},\kappa^{+4})\}$
    \end{enumerate}
\end{enumerate}

\sn
$2)$ Like part $(1)$, but replacing clause $\boxplus(f)$ by
\begin{enumerate}
    \item [$(f)'$] $\Omega \defeq \big\{(\theta^{+\kappa},\theta^{+4}) : \theta \in [\kappa,\mu)\big\}$
\end{enumerate}

\sn
$3)$ In parts $(1)$ and $(2)$, we my replace clause $\boxplus(c)$ by 
\begin{enumerate}
    \item [$(c)'$]
    \begin{enumerate}[$\bullet_1$]
        \item $\pp_J(\mu) \geq \lambda$ for some ideal $J \supseteq [\kappa]^{<\kappa}$.
\sn
        \item $\mu = \mu^{<\kappa}$
    \end{enumerate}
\end{enumerate}
as in \emph{\ref{b8}(2)}.

\sn
$4)$ If $S$ is a stationary subset of $\{\delta < \lambda : \cf(\delta) = \kappa\}$ then we can demand $S_\bfp \defeq S$, and we can add ``$\olsi C_\bfp = \LL C_{\bfp,\delta} : \delta \in S\RR$ guesses clubs." If $S \in \check I_\kappa[\lambda]$ then we can add ``$\olsi C_\bfp$ is $(\lambda,\kappa)$-good."

\sn
$5)$ In part $(2)$, we can replace $\mu = \mu^{<\kappa}$ by $2^{\mu^{<\kappa}} < 2^\lambda$.
\end{claim}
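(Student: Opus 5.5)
The plan is to follow the proof of \ref{b8}(1) closely. Only two ingredients change: the freeness of the family of sequences from which the $C^\delta_\gamma$ are built must now be $\Omega$-freeness in the sense of \ref{b16}, and the black box at the single-$\delta$ level must still come from \ref{b2}.

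\textbf{Step 1: choose $S$ and $\olsi C$.} Fix a stationary $S \subseteq S_\kappa^\lambda$ with $\delta \in S \Rightarrow \mu^2 \divides \delta$. Choose $\olsi C = \LL C_\delta : \delta \in S\RR$ as in \ref{b5}(1)(B)(a), so that every $\alpha \in C_\delta$ satisfies $\alpha > \mu$ and $\mu \divides \alpha$.

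\textbf{Step 2: produce an $\Omega$-free family (the main obstacle).} Using $\boxplus(c)$, namely $\pp^+_{J^\bd_\kappa}(\mu) > \lambda$, I want $\bar\rho = \LL \rho_\gamma : \gamma < \lambda\RR \subseteq {}^\kappa\mu$ which is $(\kappa^{+\kappa},\kappa^{+4})$-free over $\varnothing$, in the sense of \ref{b16}(1).

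The standard pcf construction of \cite[Ch.II, \S3]{Sh:g} yields a $\mu^+$-free family of size $<\pp^+$ only for $\lambda < \pp^+(\mu)$. Here $\lambda = 2^\mu$ itself may be as large as $\pp$ allows, so full $\mu^+$-freeness is not available.

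\begin{itemize}
\item First attempt: take $\rho_\gamma$ to be the increasing $<_{J^\bd_\kappa}$-sequence in $\prod_i \lambda_i$ given by pcf, with $\lambda_i$ regular and $\tp = \lambda$. Then invoke the partial freeness of such scales on subsets of size $<\kappa^{+\kappa}$, via good/almost-good points of cofinality $<\kappa^{+\kappa}$ as in \cite[\S1]{Sh:1028}.
\item Produce the partition $\bar\Lambda$ into pieces of size $<\kappa^{+4}$ by a filtration argument (continuous chains of elementary submodels, using the existence of good points in $S^{\sigma}_{\kappa^{++}}$ for small $\sigma$).
\item Code the $\rho_\gamma$ into ${}^\kappa\mu$ via bijections $\lambda_i \leftrightarrow \mu$ level by level, adjusting so that $\rho_\gamma(i) \equiv i \mod \kappa$.
\item For part (2), repeat the same argument for each $\theta\in[\kappa,\mu)$, giving the pairs $(\theta^{+\kappa},\theta^{+4})$.
\item For (3) and (5), replace $J^\bd_\kappa$ by $J$, using $\mu^{<\kappa}=\mu$ (or the stated cardinal bound) to code ${}^{\kappa>}\mu$ into $\mu$, as in \ref{b8}(2).
\end{itemize}

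I expect this step to be the hardest. If the filtration argument does not directly give the needed pieces, I would fall back on quoting the relevant freeness lemma from \cite{Sh:1028}.

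\textbf{Step 3: define $C^\delta_\gamma$ and transfer freeness.} Re-index $\bar\rho$ as $\LL \rho^*_{\delta,\gamma} : \delta \in S,\ \gamma<\lambda\RR$, using $|S\times\lambda|=\lambda$. Set
$$
C^\delta_\gamma \defeq \{\beta^\delta_i + \rho^*_{\delta,\gamma}(i) : i<\kappa\}.
$$
This gives (B)(a)-(b) as before. For (B)(c)$'$, the witness $(\bar\Lambda,h)$ for $\{\rho^*_{\delta,\gamma}\}$ transfers to the sets $C^\delta_\gamma$ exactly as in $(*)_1$-$(*)_2$ of the proof of \ref{b8}. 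This transfer uses the congruence $\rho(i)\equiv i \mod \kappa$ together with the fact that the intervals $[\beta^\delta_i,\beta^\delta_i+\mu)$ are disjoint for distinct $i$ and the same $\delta$.

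\textbf{Step 4: the black box.} Fix $\delta\in S$ and define $h_\delta : \mu\to\delta$ as in \ref{b8}. Apply \ref{b2} to $\LL h_\delta^{-1}[C^\delta_\gamma] : \gamma<\lambda_\bullet\RR$, where $\lambda_\bullet = \min\{\partial : 2^\partial>2^\mu\}$; note $\lambda_\bullet = \lambda$ since $\lambda=\lambda^{<\lambda}=2^\mu$. The hypotheses of \ref{b2} are met: $\boxplus(e)$ gives $\Sep_1$ with $\Upsilon\le\mu<2^\mu$, and we take $D$ to be the co-bounded filter on $\lambda$, which is $\mu^+$-complete because $\lambda$ is regular.

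This yields $\bar c^\delta$ satisfying \ref{b5}(2)(C). For part (4), choose $S$ and $\olsi C$ at Step 1 via club guessing \cite[Ch.III, \S1]{Sh:g} and \ref{y23}, exactly as in \ref{b8}(1B)-(1D).
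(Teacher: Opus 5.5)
Your proposal follows the paper's proof. The paper runs the argument of \ref{b8} with the $\mu^+$-free $\bar\rho$ replaced by an $\Omega$-free one, which it simply quotes from \cite[1.26]{Sh:1028} (resting on \cite{Sh:1008}) --- this is your fallback --- and then applies \ref{b2} for each $\delta$, with $\lambda_\bullet=\lambda$ because $\lambda=\lambda^{<\lambda}=2^\mu$.

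Your own scale-and-filtration sketch in Step~2 is only heuristic. For example, the $\lambda_i$ lie below $\mu$, so there are no bijections $\lambda_i\leftrightarrow\mu$ (and none are needed). It is therefore the citation that actually carries that step, exactly as in the paper.
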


\begin{remark}\label{b23}
The $\lambda = \lambda^{<\lambda}$ is not necessary; just otherwise {\ref{b8}} gives us more.
\end{remark}

\begin{PROOF}{\ref{b20}}
1) Like the proof of \ref{b8}, but 
in the choice of $\bar\rho$ (at the beginning of the proof) we replace `$\mu^+$-free' by `$\Omega$-free.' 

\sn
[Why is this possible? Use $\theta$ in the beginning of the proof of \cite[1.26\subref{a51}]{Sh:1028} (which relies on \cite[0.4-0.6\subref{y19,y22,y40}]{Sh:1008}).]

\sn
2) As above.

\sn
3) Similarly to the proof of \ref{a8}(2).

\sn
4-5) Clear.
\end{PROOF}

\newpage
\section{When do quite free subsets $\Lambda \subseteq {}^\kappa\!\mu$ exist?}\label{S4}

We know that if $\kappa \defeq \cf(\mu) < \mu < \lambda < \pp_J(\mu)$, where $J$ is an ideal on $\kappa$, \underline{then} there exists a $<_J$-increasing $(\mu^+,J)$-free sequence $\bar f \in {}^\lambda({}^\kappa\!\mu)$. This implies that if $\mu$ is strong limit singular of uncountable cofinality and $\lambda \in (\mu,2^\mu)$, \underline{then} there is a $\mu^+$-free subset $\Lambda \subseteq {}^\kappa\!\mu$ of cardinality $\lambda$. This also holds for many cardinals $\mu$ with $\cf(\mu) = \aleph_0$ (see \cite{Sh:g}).
This has applications for (e.g.) the existence of $\mu^+$-free Abelian groups with trivial dual (The TDC$_\mu$).

We intend to prove this for all $\mu$-s.

This is an example of the thesis ``assuming negations of $\GCH$ may help in proving interesting results." Above, we considered singular cardinals. 
What about regulars? If $2^\theta = 2^{<\mu} < 2^\mu$ 
(hence $\theta < \mu$)
then by an old result with Devlin \cite{Sh:65} we have Weak Diamond on $\mu$. For results from this century, \cite{Sh:1035} with Chernikov showed that by pcf considerations, for every $\mu$ there exist $\mu_0 \defeq \mu < \mu_1 < \ldots < \mu_n \defeq 2^\mu$ for $n \leq 6$ such that 
$\trp_{\kappa_\ell}^+(\mu_\ell) > \mu_{\ell+1}$ for $\ell < n$ (where $\kappa_\ell \defeq \cf(\mu_\ell) < \mu_\ell$). (See \ref{d3} below.)

Can we get freeness results? Here we see if we can get ``for every $\ell < n$ there exists a quite free subset $\Lambda_\ell \subseteq \lim\limits_{\kappa_\ell}\clT$ of cardinality $\mu_\ell$."

\bigskip
A characteristic neat result {(referenced in \S0)} is as follows.
\begin{theorem}\label{d2}
If $\mu$ is strong limit singular, $\kappa \defeq \cf(\mu)$, and $\lambda \in (\mu,2^\mu)$, \underline{then} there is a $\mu^+$-free subset $\Lambda \subseteq {}^\kappa\!\mu$ of cardinality $\lambda$.
\end{theorem}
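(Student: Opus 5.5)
We will reduce Theorem \ref{d2} to the pcf fact recalled at the start of this section: if $\kappa = \cf(\mu) < \mu < \lambda < \pp_J(\mu)$ for an ideal $J$ on $\kappa$ (containing $J_\kappa^\bd$), then there is a $<_J$-increasing $(\mu^+,J)$-free sequence $\bar f \in {}^\lambda({}^\kappa\!\mu)$. For a $\mu^+$-free subset of ${}^\kappa\!\mu$ in the sense of Definition \ref{yfree} we need freeness modulo $J_\kappa^\bd$, i.e.\ $J = J_\kappa^\bd$. When $\kappa > \aleph_0$ this is the classical argument of \cite[Ch.II]{Sh:g}, since for $\mu$ strong limit of uncountable cofinality $\pp_{J_\kappa^\bd}(\mu) = 2^\mu$. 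So the whole difficulty is $\kappa = \aleph_0$. There $\pp(\mu) = 2^\mu$ is not known in general, and we must find $\lambda' \ge \lambda$ with $\lambda' < \pp_{J_\omega^\bd}(\mu')$ for a suitable $\mu'$, then transfer.

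Concretely, fix $\lambda \in (\mu,2^\mu)$; enlarging $\lambda$ is harmless, so assume $\lambda$ is regular. Since $\mu$ is strong limit, $2^\mu = \mu^{\aleph_0}$. By the pcf cov/pp theorem, $\mu^{\aleph_0} = \mu^+ \cdot \sup\{\pp_{J_\omega^\bd}(\mu'')\}$ over $\mu''\le\mu$ of countable cofinality; since $\mu$ is strong limit, only $\mu''=\mu$ contributes beyond $\mu$, so $\pp(\mu)$ should already be $2^\mu$ with the supremum attained for $J$ ranging over ideals on $\omega$ extending $J_\omega^\bd$. Thus for $\lambda < 2^\mu$ we obtain an ideal $J \supseteq J_\omega^\bd$ on $\omega$ and an increasing sequence $\langle \mu_n : n<\omega\rangle$ of regulars cofinal in $\mu$ with $\mathrm{tcf}(\prod_n\mu_n/J) = \lambda$ (or $> \lambda$). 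The trouble is that $J$ might not equal $J_\omega^\bd$, which breaks freeness when transferred to Definition \ref{yfree}.

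To repair this, we use the trick of \ref{b8}(2): for $A \in J^+$ restrict to $A$, or better, code. Given a $(\mu^+,J)$-free $<_J$-increasing $\langle f_\alpha:\alpha<\lambda\rangle$, replace each $f_\alpha$ by $g_\alpha \in {}^\omega\!\mu$ with $g_\alpha(n) = \cd(f_\alpha\rest(n+1))$ for a fixed bijection $\cd : {}^{\omega>}\mu \to \mu$ (using $\mu^{<\omega}=\mu$). Then $g_\alpha,g_\beta$ differ from some point on as soon as $f_\alpha\ne f_\beta$, and a witness to $J$-freeness of a small family $\{f_\alpha : \alpha\in u\}$ should produce a witness to ordinary freeness of $\{g_\alpha:\alpha\in u\}$. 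The step to check is: $J$-freeness gives disjointness of $\{f_\alpha(n): n\in A_\alpha\}$ with $A_\alpha \in J^*$ only, whereas in the tree coding we need disjointness on an end segment. Tree coding makes coincidences $g_\alpha(n)=g_\beta(m)$ force $n=m$ and $f_\alpha\rest(n+1) = f_\beta\rest(n+1)$, so any two distinct branches are eventually disjoint. For families of size $\le\mu$ we then prove freeness by induction on the size, as in \cite[Ch.II,\S3]{Sh:g}. The $J$-freeness is used at singular stages of the induction to find, for a family of size a singular $\chi\le\mu$, a filtration by smaller free pieces; the tree structure lets each new branch avoid the previously used nodes above some level, because it has only countably many nodes and leaves each earlier branch at some finite level. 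Alternatively we can use Observation \ref{b13}: a tree-like set of size $<\theta$ which is $(\theta,\kappa^+)$-free is free. So it suffices to show the coded family is $(\mu^+,\aleph_1)$-free over $\varnothing$, and the partition into countable pieces is obtained from the $J$-freeness filtration.

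The main obstacle is the first step: producing, for every $\lambda<2^\mu$ when $\cf(\mu)=\aleph_0$, an ideal $J$ and a true cofinality $\lambda$ sequence carrying a $(\mu^+,J)$-free scale. If the pp computation only yields $\pp_J(\mu)$ for $J$ outside those where the trichotomy gives free scales at all $\lambda$, we will instead apply the trichotomy/good-point argument of \cite{Sh:g} to $\lambda^{+}$ and use $\check I[\lambda]$ (Claim \ref{y23}(4)) to get a club of good points of cofinality $\mu^+$, which yields $\mu^+$-freeness of the scale modulo $J$. That is where I expect the real work.
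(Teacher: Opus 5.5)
\textbf{There is a genuine gap, in your first step, and that step carries the whole difficulty.} For $\cf(\mu)=\aleph_0$ you need, for each regular $\lambda\in(\mu,2^\mu)$, an ideal $J$ on $\omega$ with $\lambda<\pp_J(\mu)$; in effect you need $\pp(\mu)=2^\mu$. As you say yourself, this is not known.

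The cov/pp theorem does not supply it. That theorem (see \ref{d5}(4)) requires the number $\sigma$ of covering pieces to be regular uncountable. So it controls covering numbers only through $\pp$ of cardinals of cofinality $\ge\sigma\ge\aleph_1$. The identity $\mu^{\aleph_0}=\mu^+\cdot\sup\pp(\mu'')$, taken over $\mu''$ of countable cofinality, is exactly the countable-cofinality case of the cov versus pp problem, which is open.

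Your fallback does not repair this. Good points, the trichotomy and $\check I[\lambda]$ give freeness of a scale that already exists. But the existence of a $<_J$-increasing sequence of length $\lambda$ in some $\prod_n\mu_n/J$ is again the statement $\lambda<\pp_J(\mu)$.

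Your worry about $J\ne J_\omega^\bd$ is a side issue, since pcf theory lets one use $J_\omega^\bd$ once $\lambda<\pp(\mu)$ is known. Note, though, that your tree-coding transfer would not work for a genuinely larger $J$. An $\omega$-branch can meet tails of uncountably many earlier branches at every level, and infinite exceptional sets in $J$ give no uniform finite cutoff.

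\textbf{The missing idea is to avoid $\pp(\mu)$ altogether} and exploit cardinals above $\mu$ whose cofinality is uncountable and below $\mu$. In the proof of \ref{d26}, the paper lets $\Theta_{\mu,\kappa}$ be the set of $\chi\in(\mu,\lambda]$ with $\cf(\chi)\in[\kappa^+,\mu)$ and $\pp_{\cf(\chi)\text{-complete}}(\chi)\ge\lambda^+$. It then splits into two cases.

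\emph{Case $\Theta_{\mu,\kappa}=\varnothing$.} Here cov/pp with $\sigma=\kappa^+$, which is now legitimately uncountable, gives $\cov(\lambda,\lambda,\mu,\kappa^+)$. Then \ref{d17} builds the family by direct diagonalization:
\begin{itemize}
\item Start from a family of $\lambda^+$ members of ${}^\kappa\mu$ that pairwise differ on a co-bounded set; this exists since $\mu^\kappa=2^\mu>\lambda$.
\item Pick $\eta_\alpha$ by induction on $\alpha<\lambda$. For each of the $\le\lambda$ covering sets $u$, avoid the fewer than $\mu$ functions that land in $S_{\alpha,u}$ on an unbounded set; the count uses $|\alpha|^\kappa<\mu$.
\item Any set in $[\lambda]^{<\mu}$ is covered by $\le\kappa$ covering sets, and this produces the freeness witness.
\end{itemize}

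\emph{Case $\Theta_{\mu,\kappa}\ne\varnothing$.} Its minimum $\mu_\bullet$ has cofinality in $[\kappa^+,\mu)$. It carries a long free increasing sequence modulo a $\kappa^+$-complete ideal on $\cf(\mu_\bullet)$. The first case applies to every size in $(\mu,\mu_\bullet)$, and \ref{d20} composes the two.

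No information about $\pp(\mu)$ at countable cofinality is used anywhere. That is exactly why the result holds for every strong limit $\mu$.
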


\mn
Recall:
\begin{definition}\label{d3}
1) For $\kappa = \cf(\kappa) \leq \mu$, let $\trp_\kappa^+(\mu)$ be the minimal 
$\lambda$ such that there is no sub-tree $\clT \subseteq {}^{\kappa>}\!\mu$ of cardinality $\mu$ with $\geq\lambda$-many $\kappa$-branches.

($\trp$ stands for \emph{tree power}.)

\mn
2) For $J$ an ideal on $\kappa < \mu$ and $\Lambda \subseteq {}^\kappa\mu$, we say 
$\Lambda$ is $(\lambda,J)$-\emph{free} when for every 
$\Lambda' \in [\Lambda]^{<\lambda}$ there exists $f : \Lambda' \to J$ such that
$$
\eta \neq \nu \in \Lambda' \wedge i \in \kappa \setminus \big( f(\eta) \cup f(\nu) \big) \Rightarrow \eta(i) \neq \nu(i).
$$

\mn
3) For $J$ an ideal on $\kappa$ and $\bar A = \LL A_\eps : \eps < \kappa\RR$, let $T_J^+(\bar A)$ be the minimal 
$\lambda$ such that there is no\footnote{
    We add the $\{0\}$ just so we don't have to worry about $A_\eps \defeq \varnothing$.
}
$\Lambda \subseteq \prod\limits_{\eps < \kappa}(A_\eps \cup \{0\})$ of cardinality $\lambda$ such that 
$$
\eta \neq \nu \in \Lambda \Rightarrow \big\{i < \kappa : 0 \neq \eta(i) \neq \nu(i) \neq 0 \big\} \equiv \kappa \mod J.
$$

\mn
3A) Let $\bfT_J^+(\bar A)$ be the minimal 
$\lambda$ such that there is no $\Lambda$ satisfying $\boxplus_{\bar A,\Lambda}$ below.
\begin{enumerate}
    \item [$\boxplus_{\bar A,\Lambda}$]
    \begin{enumerate}
        \item $\Lambda \subseteq \bigcup\limits_{a \in J^+} \prod\limits_{i \in a} (A_i \cup \{0\})$
\sn
        \item $|\Lambda| \geq \lambda$
\sn
        \item $\eta \neq \nu \in \Lambda \Rightarrow \big\{i \in \dom(\eta) \cap \dom(\nu) : \eta(i) = \nu(i) \big\} \in J.$
    \end{enumerate}
\end{enumerate}

\mn
3B) Above, if $\bar A$ is the constant sequence $\LL B : \eps < \kappa\RR$, then we may write $T_J^+(B)$ and $\bfT_J(B)$.

\mn
3C) The default value of $J$ is 
$[\kappa]^{<\kappa}$. If we omit $J$, this is what we mean.

\mn
4) For $\chi \geq \lambda \geq \theta \geq \sigma$, $\cov(\chi,\lambda,\theta,\sigma)$ means that there exists a 
$\cP \subseteq [\lambda]^{<\theta}$ of cardinality $\leq\chi$ such that every 
$u \in [\lambda]^{<\theta}$ is contained in the union of $<\sigma$-many members of $\cP$.
\end{definition}

\bn
\begin{observation}\label{d5}
$1)$ Without loss of generality we may strengthen \emph{\ref{d3}(2)} to
$$
\eta \neq \nu \in \Lambda' \wedge \big[ i \in \kappa \setminus  f(\eta) \big] \wedge \big[ j \in \kappa \setminus  f(\nu) \big] \Rightarrow \eta(i) \neq \nu(j).
$$

\mn
$2)$ Similarly in \emph{\ref{d3}(3)}.

\mn
$3)$ If $\mu = \mu^{<\kappa} < \mu^\kappa$ then $\trp_\kappa(\mu) = \mu^\kappa$ (and even $\trp_\kappa^+(\mu) = (\mu^\kappa)^+$). So if $\mu$ is strong limit of cofinality $\kappa$ then $\trp_\kappa(\mu) = 2^\mu$.

\mn
$4)$ If $\chi \geq \lambda \geq \theta > \sigma$ and $\sigma$ is regular uncountable, then $\cov(\chi,\lambda,\theta,\sigma)$ holds \underline{iff}
$$
\chi \geq \sup\!\big\{ \pp_{\cf(\mu)\text{-}\mathrm{comp}}(\mu) : \mu \in [\theta,\lambda],\ \cf(\mu) \in [\sigma,\theta) \big\}.
$$

\mn
$5)$ Assume $\bar A = \LL A_\eps : \eps < \kappa\RR$ with $|A_\eps| \ge 2^\kappa$ and $J$ an ideal on $\kappa$ such that
$$
u \in J^+ \Rightarrow T_J^+(\bar A) = T_{J \rest u}^+ (\bar A \rest u).
$$
{Then} $\bfT_J^+(\bar A) = T_J^+(\bar A)$ \underline{or} $\bfT_J^+(\bar A) = \sigma$ and $T_J^+(\bar A) = \sigma^+$ for some $\sigma$ of cofinality 
$\leq 2^\kappa$.

\mn
$6)$ In part $(5)$, if $J \defeq [\kappa]^{<\kappa}$ then $\bfT_J^+(\bar A) = T_J^+(\bar A)$.

\mn
$7)$ In parts $(5)$ and $(6)$, we may replace $2^\kappa$ by $\cf(\clP(\kappa) \setminus J,\subseteq)$.
\end{observation}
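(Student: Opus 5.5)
The observation collects seven facts of quite different depth. I would handle them separately: coding tricks for (1), (2) and (6), a direct count for (3), a quotation for (4), and a pigeonhole argument over domains for (5) and (7).

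\textbf{Part (1).} The plan is a coding trick. Fix a bijection $\pi : \kappa \times \mu \to \mu$, which exists since $\kappa \le \mu$. Replace each $\eta \in \Lambda$ by $\eta^\pi$, where $\eta^\pi(i) \defeq \pi(i,\eta(i))$. Then $\eta^\pi(i) = \nu^\pi(j)$ forces $i = j$ and $\eta(i) = \nu(i)$. So a function $f : \Lambda' \to J$ witnessing \ref{d3}(2) for $\Lambda'$ also witnesses the strengthened two-index version for $\{\eta^\pi : \eta \in \Lambda'\}$. The map $\eta \mapsto \eta^\pi$ is injective and lands in ${}^\kappa\mu$, so cardinality and the relevant freeness are preserved. \textbf{Part (2)} is the same argument applied to $\prod_\eps (A_\eps \cup \{0\})$: code $A_\eps$ into $\{\eps\} \times A_\eps$, keeping $0$ fixed.

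\textbf{Part (3).} The tree ${}^{\kappa>}\mu$ has cardinality $\mu^{<\kappa} = \mu$ and exactly $\mu^\kappa$ branches of length $\kappa$, which gives the lower bound. Conversely, any tree of cardinality $\mu$ has at most $\mu^\kappa$ branches of length $\kappa$, which gives $\trp^+_\kappa(\mu) = (\mu^\kappa)^+$. For strong limit $\mu$ of cofinality $\kappa$ we have $\mu^{<\kappa} = \mu$ and $\mu^\kappa = 2^\mu > \mu$, so the first statement applies.

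\textbf{Part (4).} This is the cov--pp theorem of \cite[Ch.II, 5.4]{Sh:g}; I would quote it rather than reprove it.

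\textbf{Parts (5)--(7).} This is the only part with real content, and I expect the main obstacle to be the two technical points isolated below: the treatment of coordinates equal to $0$, and the exact singular exception.

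For $T^+_J \le \bfT^+_J$, take a family $\Lambda$ witnessing $T$ and read it as a $\bfT$-family with full domain $\kappa \in J^+$. The difficulty is that two functions may agree at coordinates where both are $0$. I would handle this by using $|A_\eps| \ge 2^\kappa$ to recode, so that the value $0$ is never needed on the sets where it matters. Alternatively one can shrink domains to $\{i : \eta(i) \ne 0\}$ when that set lies in $J^+$, and treat the remaining functions separately.

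For the reverse inequality, let $\Lambda$ satisfy $\boxplus_{\bar A,\Lambda}$ with $|\Lambda| = \lambda$. Each $\eta \in \Lambda$ has domain in $J^+$, and I would use a cofinal family in $(\clP(\kappa) \setminus J, \supseteq)$ of cardinality at most $2^\kappa$. This bound can be replaced by $\cf(\clP(\kappa) \setminus J,\subseteq)$, which is part (7). Say $\eta \mapsto a_\eta \in J^+$ with $a_\eta \subseteq \dom(\eta)$.
\begin{itemize}
\item If $\cf(\lambda) > 2^\kappa$, pigeonhole gives $\lambda$-many $\eta$ sharing the same $a$. Restricting to $a$, we obtain a witness for $T^+_{J \rest a}(\bar A \rest a)$. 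By the hypothesis $T^+_{J \rest a}(\bar A \rest a) = T^+_J(\bar A)$, padding with $0$ outside $a$ yields a $T$-family of size $\lambda$. (Agreements occur only where both are $0$, which is handled as before.)
\item If $\cf(\lambda) \le 2^\kappa$, apply the previous bullet to each cardinal below $\lambda$ of cofinality $> 2^\kappa$. This gives $T \ge \lambda$ unless $\lambda = \sigma$ is a limit attained only in the sense $\bfT^+ = \sigma$, $T^+ = \sigma^+$. That is exactly the stated exception, with $\cf(\sigma) \le 2^\kappa$.
\end{itemize}

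\textbf{Part (6).} When $J = [\kappa]^{<\kappa}$, domains are unbounded subsets of $\kappa$. Here I would avoid the exception by enlarging each function to total domain: fill the missing coordinates with values chosen so that distinct functions are distinct there, which is possible since $|A_\eps| \ge 2^\kappa$ and we can diagonalize coordinate by coordinate along the increasing enumeration of the domain. This gives $\bfT^+ = T^+$ outright.
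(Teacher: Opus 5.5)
Parts (1)--(4) agree with the paper: the pairing trick $\eta'(i)=\pr(\eta(i),i)$, the classical branch count, and quoting the cov--pp theorem. In (5) and (7) you also follow the paper's route: sort the witnesses by domain, apply the hypothesis $T^+_J(\bar A)=T^+_{J\rest u}(\bar A\rest u)$ to a class of full size, and approximate from below when the cofinality is $\le 2^\kappa$.

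\textbf{The genuine gap is part (6).} Filling the coordinates outside $\dom(\eta)$ ``so that distinct functions are distinct there'' cannot be done. A coordinate $i$ offers only $|A_i|$ values (possibly just $2^\kappa$), while far more functions may miss $i$. The real requirement, bounded agreement on a common missing unbounded set $c$, amounts to a large witness for $T^+_{J\rest c}(\bar A\rest c)$, which is what you are trying to prove.

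Filling from a function's own values along its enumeration fails too. Take $\kappa=\aleph_0$, $\bar A$ constant, $\dom(\eta)$ the even and $\dom(\nu)$ the odd numbers, and $\nu(2k+1)=\eta(2k+2)$. This pair satisfies $\boxplus_{\bar A,\Lambda}$, yet the fill $\eta'(2k+1)\defeq\eta(2k+2)$ agrees with $\nu$ at every odd number.

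The missing idea, which is what the paper does, is to recode every function wholesale, with its domain as a tag, rather than extend it.
\begin{enumerate}
\item Re-index $\eta$ along the increasing enumeration $h_u:\kappa\to u\defeq\dom(\eta)$. This is legitimate exactly because every member of $J^+=[\kappa]^\kappa$ has order type $\kappa$, so $h_u$ pulls bounded sets back to bounded sets.
\item Code each value together with $u$ by an injection $\pr_i$ from $(A_i\cup\{0\})\times J^+$ into $A_i\setminus\{0\}$. This is where $|A_i|\ge 2^\kappa$ is used; as written it presupposes $\bar A$ constant, the case needed in \S4.
\end{enumerate}
Then functions with different domains never agree, and two functions with the same domain agree only on a bounded set.

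\textbf{Smaller points in (5).}
\begin{enumerate}
\item For $T^+_J(\bar A)\le\bfT^+_J(\bar A)$ there is no zero problem: the $T$-condition already puts $\{i:\eta(i)=\nu(i)\}$, common zeros included, into $J$.
\item The zero problem is in the converse, where a $\bfT$-function may vanish on a $J$-positive set. Shrink each domain to its nonzero part, and discard the functions whose nonzero part lies in $J$. Their zero sets are $J$-positive with pairwise intersections in $J$, so there are at most $2^\kappa$ of them.
\item Drop ``padding with $0$ outside $a$''. It is unnecessary, since the hypothesis transfers the bound from $J\rest a$ to $J$. It would also violate the $T$-condition unless $\kappa\setminus a\in J$.
\item Your bullets give nothing for $\lambda\le 2^\kappa$. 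There, $|A_\eps|\ge 2^\kappa$ directly supplies $2^\kappa$ nowhere-zero functions that differ everywhere.
\item Your case analysis really yields the exception $T^+_J(\bar A)=\sigma$, $\bfT^+_J(\bar A)=\sigma^+$. The order printed in the statement is impossible, since $T^+_J\le\bfT^+_J$.
\end{enumerate}

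In fact the tagging removes the exception altogether. Let $W_u$ be the class of functions whose shrunken domain is $u$. For each such $u$, the hypothesis gives a $T_J(\bar A)$-family of size $|W_u|$. Make its members nowhere zero, which changes them only on $J$-sets, and tag their values by $u$. The union of these at most $2^\kappa$ families is a single $T_J(\bar A)$-family of size $\sum_u|W_u|$. The same gluing proves (6) for non-constant $\bar A$.
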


\begin{PROOF}{\ref{d5}}
1-2) As $\mu \geq \kappa$, we can define the function $\eta' \in {}^\kappa\mu$ by
$\eta'(i) \defeq \pr(\eta(i),i)$ for $i < \kappa$ 
(where $\pr : \mu \times \mu \to \mu$ is some bijection), and then replace $\eta$ by $\eta'$.

\mn
3) Classical.

\mn
4) By \cite{Sh:g}.

\mn
5) Clearly $\bfT_J^+(\bar A) \geq T_J^+(\bar A)$. To get the `$\leq$' direction, assume 
\begin{enumerate}
    \item [$\circledast_1$] $\chi \in \big( 2^\kappa,\bfT_J^+(\bar A) \big) \text{ (or just } \big| \clP(\kappa) / J \big| < \chi < \bfT_J^+(\bar A)).$
\end{enumerate}
For $\alpha < \chi$, let $\eta_\alpha \in \prod\limits_{i \in u_\alpha}A_i$ (where $u_\alpha \in J^+$) witness $\circledast_1$.

For each $u \in J^+$, define $W_u \defeq \{\alpha < \chi : u_\alpha = u\}$. If for some $u$ the set $W_u$ has cardinality $\chi$, then clearly 
$T_{J \rest u}^+(\bar A \rest u) > \chi$ and we are done.

So assume
$$
u \in J^+ \Rightarrow |W_u| < \chi;
$$
recalling $\chi > \big| \clP(\kappa) / J \big|$, necessarily $\cf(\chi) \leq \big| \clP(\kappa) / J \big|$ and $\chi = \sup\!\big\{ |W_u| : u \in J^+\big\}$.

Now $\LL \eta_\alpha : \alpha \in W_u\RR$ essentially witnesses `$T_{J \rest u}^+(\bar A \rest u) \geq |{W_u}|$,' and we can easily finish.

(Note that if $\bfT_J^+(\bar A) > \chi^+$ then we could have used $\chi^+$ instead of $\chi$.)

\mn
$6)$ for $A \in J^+ = \clP(\kappa) \setminus J$, let 
$h_A : \kappa \to A$ be the inverse of the function 
$$
i \mapsto \otp(A \cap i).
$$
For each $A_i \in \bar A$ (that is, for each $i < \kappa$), let $\pr_i$ be a bijection between $(A_i \cup \{0\}) \times J^+$ and $A_i$.

Beginning as in the proof of part (3), for each $\alpha < \chi$ we define $\eta_\alpha' \in \prod\limits_{i<\kappa} A_i$ as follows.
\begin{enumerate}
    \item [$\circledast_\alpha$] For $\alpha < \chi$ and $i < \kappa$, 
    we let $\eta_\alpha'(i) \defeq \pr_i\big(h_{u_\alpha}\!(i),u_\alpha\big)$
\end{enumerate}

Now $\LL \eta_\alpha' : \alpha < \chi\RR$ witnesses `$\bfT_J^+(\bar A) > \chi^+$.'

\mn
$7)$ Left to the reader.
\end{PROOF}

\bigskip
Now comes the section's main results: \ref{d17}, \ref{d20}, \ref{d26}. (The reader may concentrate on the case 
`$\kappa_* \defeq \kappa$, $\mu_* \defeq \mu$' and on part (1) of \ref{d17}.)
\begin{claim}\label{d17}
$1)$ If clauses $(a)$-$(e)$ below hold, \underline{then} there is a $(\mu_*^+,J)$-free subset $\Lambda \subseteq {}^\kappa\mu$ of cardinality $\lambda$.
\begin{enumerate}[$(a)$]
    \item $\kappa_* \leq \kappa$ are regular, and $J$ is a $\kappa_*$-complete ideal on $\kappa$.
\sn
    \item $\kappa \defeq \cf(\mu) < \mu_* \leq \mu \leq \lambda \leq \chi$
\sn
    \item $\mu^\kappa \geq \chi^+$
\sn
    \item $\cov(\chi,\lambda,\mu_*,\kappa_*^+)$
\sn
    \item $\alpha < \mu_* \Rightarrow |\alpha|^\kappa < \mu$
\end{enumerate}

\mn
$2)$ In part $(1)$, we may replace clauses $(c)$ and $(e)$ by
\begin{enumerate}
    \item [$(c)'$] 
    $T_J^+\big(\mu \big) > \chi^+$
\sn
    \item [$(e)'$] $\alpha < \mu \Rightarrow \bfT_J^+(|\alpha|) \leq \chi^+$.
\end{enumerate}

\mn
$3)$ If we weaken \emph{`}$T_J^+\big(\mu \big) > \chi^+$\emph{'} in clause $(c)'$ to 
\emph{`}There exists a sequence of ordinals $\olsi\alpha$ with $\mu = \bigcup\limits_{j<\kappa} \alpha_j$, each $\alpha_i < \mu$, and $\bfT_J^+(\olsi\alpha) > \chi^+$,\emph{'}
then we may still conclude that there is a $\mu_*^+$-free subset 
of $\bigcup\limits_{u \in J^+}\prod\limits_{i \in u} \alpha_i$.
\end{claim}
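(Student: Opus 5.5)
The plan is to build $\Lambda$ by composing two ingredients. The first is a large family of branches that are pairwise eventually disjoint mod $J$; clause $(c)$ provides it. The second is a bounded coding of ordinals below $\mu_*$ into single values below $\mu$; clause $(e)$ makes that coding possible. Clause $(d)$ is then used to reduce freeness of an arbitrary small subset to $\le\kappa_*$ pieces, and $\kappa_*$-completeness of $J$ glues the exceptional sets of the pieces together.

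\emph{Step 1: the branches.} Fix an increasing sequence $\LL\mu_i : i<\kappa\RR$ of cardinals with limit $\mu$. Using $\mu^\kappa \ge \chi^+$, choose pairwise distinct $\LL \rho_\eps : \eps<\chi\RR$, essentially branches of the tree ${}^{\kappa>}\mu$. Replace each $\rho_\eps$ by $\rho_\eps'(i) \defeq \cd(\rho_\eps\rest(i+1))$, where $\cd$ is a fixed coding into $\mu$, as in the proof of \ref{d5}(1). After this, two distinct branches share no values once past their splitting point, and values at different coordinates never coincide. (Coding the full initial segment $\rho_\eps\rest(i+1)$ into one value is where I expect to need care; I would arrange the tree so that its level-$i$ nodes have size $<\mu$, for instance by building it from $\prod_{j\le i}\mu_j$.)

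\emph{Step 2: the covering family and the definition of $\Lambda$.} Let $\cP = \{a_\eps : \eps<\chi\} \subseteq [\lambda]^{<\mu_*}$ witness $\cov(\chi,\lambda,\mu_*,\kappa_*^+)$. Close $\cP$ under intersections with initial segments and similar operations, which is harmless since $\chi^{<\mu_*}$ is not needed if done via a fixed elementary submodel of size $\chi$. For $\alpha\in a_\eps$, put
$$
\eta_{\eps,\alpha}(i) \defeq \pr\big(\rho_\eps'(i),\ \otp(\alpha\cap a_\eps)\big).
$$
Since $\otp(a_\eps)<\mu_*$ and $|\alpha|^\kappa<\mu$ for $\alpha<\mu_*$, these values can be pushed below $\mu$ while keeping coordinates separated. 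Within one $a_\eps$, distinct $\alpha$ then receive functions that are pairwise disjoint from the start. Across distinct $\eps$, the functions are disjoint from the splitting point on, by Step 1.

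\emph{Step 3: freeness and the main obstacle.} Given $\Lambda'$ of size $\le\mu_*$, its index set lies inside $\bigcup_{j<\kappa_*} a_{\eps_j}$ by clause $(d)$. Sort the elements by the least $j$ that covers them. The exceptional sets are bounded initial segments, namely the splitting points of the $\le\kappa_*$ branches $\rho_{\eps_j}$. Taking $f(\eta)$ to be the union of fewer than $\kappa_*$ of them stays in $J$ by $\kappa_*$-completeness, which gives $(\mu_*^+,J)$-freeness. The step I expect to be hardest is that $\Lambda$ must consist of one function $\eta_\alpha$ per $\alpha<\lambda$, whereas the covering sets $a_{\eps_j}$ produced for a given $\Lambda'$ need not be the ones used to define $\eta_\alpha$.

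I would first try to handle this by letting $\eta_\alpha(i)$ code, coordinatewise, the whole bounded information $\LL (\rho_\eps'(i),\otp(\alpha\cap a_\eps)) : \eps \in w_{\alpha,i}\RR$. Here $w_{\alpha,i}$ is a $<\mu$-sized, increasing-in-$i$ set of indices $\eps$ with $\alpha\in a_\eps$, chosen so that $\bigcup_i w_{\alpha,i}$ contains every $\eps$ relevant to $\alpha$ in the submodel ordering. Clause $(e)$ is exactly what keeps these codes below $\mu$. Then, given $\Lambda'$ and $j$ with $\alpha \in a_{\eps_j}$, the index $\eps_j$ enters $w_{\alpha,i}$ from some $i$ onward, and freeness follows as above. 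If this fails, the fallback is a transfinite induction on $\lambda$ choosing $\eta_\alpha$ to avoid the $\le\chi$ constraints coming from earlier $\alpha$'s, again using $\mu^\kappa>\chi$.

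Parts (2) and (3) should follow by the same argument. For (2), the $\chi^+$ branches come from $T_J^+(\mu)>\chi^+$ via \ref{d5}(5),(6) instead of from $\mu^\kappa\ge\chi^+$, and the bound $\bfT^+_J(|\alpha|)\le\chi^+$ replaces the cardinal arithmetic in $(e)$. For (3), the branches are taken in $\bigcup_{u\in J^+}\prod_{i\in u}\alpha_i$.
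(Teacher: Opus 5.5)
\textbf{There is a genuine gap.} Your Steps 2--3 do not produce an admissible $\Lambda$. They give one function $\eta_{\eps,\alpha}$ for each pair $(\eps,\alpha)$ with $\alpha\in a_\eps$, and you rightly flag that $\Lambda$ needs a single $\eta_\alpha$ per $\alpha<\lambda$. The proposed repair does not close this gap. An ordinal $\alpha$ may lie in as many as $\chi$ members of $\cP$ (nothing in $(d)$ bounds this by $\mu$), while $\bigcup_{i<\kappa}w_{\alpha,i}$ has size at most $\mu$. The sets $a_{\eps_j}$ that clause $(d)$ hands you for a given $\Lambda'$ are arbitrary members of $\cP$. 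So nothing forces $\eps_j\in\bigcup_i w_{\alpha,i}$ for the relevant $\alpha$; ``relevant in the submodel ordering'' is never given content. Nor does $(e)$ let you code the tuples $\LL(\rho'_\eps(i),\otp(\alpha\cap a_\eps)):\eps\in w_{\alpha,i}\RR$ injectively below $\mu$. It bounds $|\alpha|^\kappa$ only for $\alpha<\mu_*$, whereas your tuples have length up to $|w_{\alpha,i}|<\mu$ with entries ranging over $\mu$. The essential role of $(e)$ is counting, not coding. This is also why your route gives no way to use $(e)'$, a bound on $\bfT^+_J$, in part $(2)$. A smaller point: $\cov(\chi,\lambda,\mu_*,\kappa_*^+)$ gives up to $\kappa_*$ covering sets, and a union of $\kappa_*$ members of a $\kappa_*$-complete ideal need not lie in $J$.

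\textbf{Your fallback is the paper's proof, but you omit its substance.} Take $\Lambda^*\subseteq{}^\kappa\mu$ of size $\ge\chi^+$ (not $\chi$) whose distinct members agree only on a set in $J$; this comes from $(c)$, resp.\ $(c)'$. By induction on $\alpha<\lambda$, choose $\eta_\alpha\in\Lambda^*$ together with sets $a^\alpha_u\in J$ for \emph{every} $u\in\cP$. These must satisfy: for $\beta\in u\cap\alpha$, the functions $\eta_\alpha,\eta_\beta$ differ at each $i\notin a^\alpha_u\cup a^\beta_u$. This side bookkeeping is what spares $\eta_\alpha$ from having to ``know'' which covering set will later be used.
\begin{itemize}
\item At stage $\alpha$, let $S_{\alpha,u}$ be the set of values $\eta_\beta(i)$ for $\beta\in u\cap\alpha$ and $i\notin a^\beta_u$, so $|S_{\alpha,u}|<\mu_*$.
\item The $\eta\in\Lambda^*$ with $\eta^{-1}(S_{\alpha,u})\notin J$ restrict to pairwise $J$-different functions with $J$-positive domains into $S_{\alpha,u}$. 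Hence there are fewer than $\bfT^+_J(|u|+\kappa)\le\chi^+$ of them. This is exactly where $(e)$, resp.\ $(e)'$, is used.
\item Since $|\cP|\le\chi$, at most $\chi<|\Lambda^*|$ candidates are excluded. So $\eta_\alpha$ exists, and $a^\alpha_u\defeq\eta_\alpha^{-1}(S_{\alpha,u})$ works.
\item For freeness of $u^*\in[\lambda]^{<\mu_*}$, cover it by $\LL u_i:i<\kappa_*\RR\subseteq\cP$. Give each $\alpha$ only the exceptional set belonging to its least covering index, so each $\alpha$ conflicts with few others. Then repair those conflicts inside small equivalence classes using the completeness of $J$.
\end{itemize}
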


\begin{PROOF}{\ref{d17}}
1) We shall prove that part (1) follows from part (2). To that end, we need to prove that assumptions (1)(a)-(e) imply (2)$(c)'$,$(e)'$.

Concerning clause $(c)'$: 
as $\cf(\mu) = \kappa$, there exists an increasing sequence\\ 
$\LL \mu_i : i < \kappa\RR$ with limit $\mu$. By clause (e), without loss of generality $\mu_i = (\mu_i)^\kappa > 2^\kappa$. By clause (1)(c) and basic cardinal arithmetic,
$$
\big| \textstyle\prod\limits_i \mu_i\big| = \big| \textstyle\sum\limits_i \mu_i\big|^\kappa = \mu^\kappa.
$$

Hence there is a sequence 
$\LL \eta_\alpha : \alpha < \chi^+\RR$ of members of $\prod\limits_{i<\kappa}\mu_i$ without repetition, and a one-to-one function 
$\cd_i : {}^i(\mu_i) \to \mu_i$ for each $i < \kappa$. 
For $\alpha < \lambda$, we define $\nu_\alpha \in {}^\kappa\mu$ by 
$$
i < \kappa \Rightarrow \nu_\alpha(i) \defeq \cd_i(\eta_\alpha \rest (i+1)).
$$
Easily, $\alpha < \beta < \chi^+ \Rightarrow \big| \big\{ i < \kappa : \nu_\alpha(i) = \nu_\beta(i) \big\} \big| < \kappa.$
So by the assumption on $J$, the sequence $\LL \nu_\alpha : \alpha < \chi^+\RR$ witnesses $T_J^+(\mu) > \chi^+$.

As for clause (2)$(e)'$, it follows immediately from (1)(e).

\mn
2) First,
\begin{enumerate}
    \item [$(*)_1$] Let $\cP \subseteq [\lambda]^{<\mu_*}$ be the covering guaranteed by Definition \ref{d3}(4), recalling that we assumed $\cov(\chi,\lambda,\mu_*,{\kappa_*^+})$ in clause (1)(d).\\ (So in particular, $|\cP| \leq \chi$.)
\sn
    \item [$(*)_2$] Let $\Lambda^{\!*} \subseteq {}^\kappa\mu$ witness 
    $T_J^+(\mu) > \chi^+$ (as assumed in clause (2)$(c)'$). Note this implies $|\Lambda^{\!*}| \geq \chi^+$.
\end{enumerate}

\sn
We shall try to choose $\eta_\alpha$ and $\bar a_\alpha$ by induction on 
$\alpha < \lambda$ such that
\begin{enumerate}
    \item [$(*)_3^\alpha$] 
    \begin{enumerate}
        \item $\eta_\alpha \in \Lambda^{\!*}$
\sn
        \item $\bar a_\alpha = \LL a_u^\alpha : u \in \cP\RR$
\sn
        \item $a_u^\alpha \in J$
\sn
        \item if $u \in \cP$, $\beta \in u \cap \alpha$, and $i \in \kappa \setminus \big( a_u^\alpha \cup a_u^\beta \big)$, \underline{then} $\eta_\alpha(i) \neq \eta_\beta(i)$.
    \end{enumerate}
\end{enumerate}

Assuming we have succeeded up to Stage $\alpha$, for each $u \in \cP$ let 
$$
S_{\alpha,u} \defeq \big\{ \eta_\beta(i) : \beta \in u \cap \alpha,\ {i \in \kappa \setminus a_u^\beta}\big\}.
$$
So $S_{\alpha,u} \in [\mu]^{<|u|+\kappa}$; and as $|u| + \kappa < \mu_*$, we may say $S_{\alpha,u} \in [\mu]^{<\mu_*}$. 

For $\eta \in \Lambda^{\!*}$, let 
$$
W_{\alpha,u,\eta} \defeq \eta^{\supminus1}(S_{\alpha,u})
$$
and
$$
\Lambda_{\alpha,u} \defeq \big\{\eta \in \Lambda^{\!*} : W_{\alpha,u,\eta} \notin J \big\}.
$$
Now
\begin{enumerate}
    \item [$\bullet_{3.1}$] $|\Lambda_{\alpha,u}| < \bfT_J^+(S_{\alpha,u})$.
\end{enumerate}
[Why? Because $\big| \{\eta \rest W_{\alpha,u,\eta} : \eta \in \Lambda_{\alpha,u}\} \big| < \bfT_J^+(S_{\alpha,u})$ by its definition.]

\sn
As $|S_{\alpha,u}| \leq |u| +\kappa$, we may conclude $|\Lambda_{\alpha,u}| < \bfT_J^+(|u| +\kappa)$, and therefore
\begin{enumerate}
    \item [$\bullet_{3.2}$] $|\Lambda_{\alpha,u}| \leq \chi$.
\end{enumerate}
[Why? By clause $(e)'$ of our assumptions.]

\sn
Therefore (recalling $(*)_1$)
\begin{enumerate}
    \item [$(*)_4$] $\Lambda_\alpha \defeq \bigcup\limits_{u \in \cP} \Lambda_{\alpha,u}$ has cardinality $\leq |\cP| + \chi = \chi < |\Lambda^{\!*}|$.
\end{enumerate}

\sn
So we can choose $\eta_\alpha \in \Lambda^{\!*} \setminus \Lambda_\alpha$, and now: 
\begin{enumerate}
    \item [$(*)_5$] For each $u \in \cP$ with $u \ni \alpha$, we can choose $a_u^\alpha$ as required in $(*)_3^\alpha$(d).
\end{enumerate}
[Why? Because $\eta_\alpha \notin \Lambda_{\alpha,u}$.]

\sn
Lastly, let $\Lambda \defeq \{\eta_\alpha : \alpha < \lambda\}$.
\begin{enumerate}
    \item [$(*)_6$] $\Lambda \in [\Lambda^{\!*}]^\lambda$ is $\mu^+$-free.
\end{enumerate}
Why? First, $\bigcup\cP = \lambda$ by the definition of $\cP$ in $(*)_1$.
Hence if $\alpha < \beta < \lambda$ then for some $u \in \cP$ we have $\beta \in u$. {But} $\eta_\beta \notin \Lambda_{\alpha,u}$ by our choices, and hence 
$\eta_\alpha \neq \eta_\beta$.
Moreover, $\eta_\alpha \neq_J \eta_\beta$ (that is, $\{i < \kappa :  \eta_\alpha(i) = \eta_\beta(i)\} \in J$). Hence $|\Lambda| = \lambda$.

Second, let $u^* \in [\lambda]^{<\mu_*}$. Then by the choice of $\cP$ (in $(*)_1$) there is a sequence 
$$
\LL u_i : i < \kappa_*\RR \subseteq \cP
$$ 
such that $u^* \subseteq \bigcup\limits_i u_i$. For $\alpha \in u^*$, let $i_\alpha = i(\alpha) \defeq \min\{i < \kappa_* : \alpha \in u_i\}$ and let $a_\alpha^* \defeq a_{u_{i(\alpha)}}^\alpha$. Let 
$$
W \defeq \big\{ (\alpha,\beta) \in u^* \times u^* :  \big( \exists j \in \kappa \setminus (a_\alpha^* \cup a_\beta^*) \big) \big[ \eta_\alpha(j) = \eta_\beta(j) \big] \big\}.
$$

Easily, $\alpha \in u^* \Rightarrow \big|\{\beta : (\alpha,\beta) \in W\} \big| \leq \kappa_*$. Hence (noting that $W$ is a symmetric relation on $u^*$) we can find an equivalence relation $E$ on $u$ such that $(\alpha,\beta) \in W \Rightarrow \alpha\ E\ \beta$ and every equivalence class is of cardinality $\leq\kappa_*$.

For $S$ an equivalence class of $E$, let $\LL \alpha_j^S : j < j_S \leq \kappa\RR$ list $S$ without repetition. For $\alpha \defeq \alpha_j^S \in S$, let 
$$
a_\alpha \defeq a_\alpha^* \cup \big\{i \in \kappa_* : (\exists\eps < j)\big[ \eta_{\alpha_\eps^S}(i) = \eta_\alpha(i) \big] \big\}.
$$
This is a member of $J$ because {$j < \kappa_*$} and $J$ is $\kappa_*$-complete (by assumption $(1)(a)$), and by our choice of $\Lambda$.

So $\LL a_\alpha : \alpha \in u\RR$ witnesses the freeness demand in $(*)_6$.

\mn
3) Similarly, but in $(*)_2$ we choose $\Lambda^{\!*}$ to witness clause $(c)''$.
\end{PROOF}

\bn
\begin{claim}\label{d20}
If clauses $(a)$-$(f)$ below hold, \underline{then} there is a $(\mu^+,J)$-free subset $\Lambda \subseteq {}^\kappa\mu$ of cardinality $\lambda$.
\begin{enumerate}[$(a)$]
    \item $J$ is a $\kappa$-complete ideal on $\kappa$.
\sn
    \item $\kappa \defeq \cf(\mu) < \mu < \mu_\bullet \leq \lambda \leq \chi$
\sn
    \item $\bfT_J^+(\mu) > \chi^+$
\sn
    \item $\kappa_\bullet \defeq \cf(\mu_\bullet) \in [\kappa^+,\mu)$ and 
    $$
    \partial \in (\mu,\mu_\bullet) \wedge \cf(\partial) \in [\kappa^+,\mu) \Rightarrow \pp_{\cf(\partial)}(\partial) < \mu_\bullet.
    $$
    \item $\lambda^+ < \pp_{J_\bullet}^+\!(\mu_\bullet)$, where $J_\bullet$ is a $\kappa^+$-complete ideal on $\kappa_\bullet$.\footnote{
        Alternatively, maybe $J_\bullet$ is just $\big(\kappa+\cf(J,\subseteq)\big)^+$\!-complete.
    }
\sn
    \item $\alpha < \mu \Rightarrow |\alpha|^{\kappa_\bullet} < \mu$.
\end{enumerate}
\end{claim}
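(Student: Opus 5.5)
We will follow the inductive construction from the proof of \ref{d17}(2), with the hypothesis $\cov(\chi,\lambda,\mu_*,\kappa_*^+)$ replaced by a covering family obtained from pcf. Let $\Lambda^{\!*} \subseteq \bigcup_{u\in J^+}\prod_{i\in u}\mu$ witness $\bfT_J^+(\mu) > \chi^+$, using clause $(c)$; so $|\Lambda^{\!*}| \geq \chi^+$ and any two distinct members agree only on a set in $J$. We then choose $\eta_\alpha \in \Lambda^{\!*}$ for $\alpha<\lambda$ by induction, together with sets $a^\alpha_u \in J$ indexed by $u$ in a family $\cP \subseteq [\lambda]^{\le\mu}$ of size $\le\chi$. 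As before, $\eta_\alpha$ must avoid $\Lambda_{\alpha,u}$, the set of $\eta$ that meet $S_{\alpha,u}$ on a $J$-positive set. The count $|\Lambda_{\alpha,u}| < \bfT_J^+(|S_{\alpha,u}|) \le \chi^+$ should come from clause $(f)$, via $|\alpha|^\kappa \le |\alpha|^{\kappa_\bullet}<\mu$, together with Observation \ref{d5}(5)--(6). Since $|\Lambda^{\!*}|>\chi$, the choice is always possible.

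The difference from \ref{d17} is that we want $\mu^+$-freeness, not $\mu_*^+$-freeness with $\mu_*<\mu$. Sets $u$ of size $\mu$ cannot be covered by $\kappa$ members of a small $[\lambda]^{<\mu}$ family, so we must use a pcf representation instead. By clause $(e)$, there is a $<_{J_\bullet}$-increasing sequence $\bar g=\LL g_\alpha:\alpha<\lambda^+\RR$ in $\prod_{\eps<\kappa_\bullet}\mu_\eps$, with $\mu_\eps$ regular, increasing, and cofinal in $\mu_\bullet$. We may choose it $\mu^+$-free (every subset of size $\le\mu$ is free modulo $J_\bullet$), using clause $(d)$ and the standard trichotomy/$\pp$ arguments of \cite[Ch.II]{Sh:g}. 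For $\alpha<\lambda$ we attach the sets $u_{\eps,\zeta}$ of indices $\beta$ with $g_\beta(\eps)=\zeta$, plus closure under the earlier covering. Clause $(d)$ also bounds the number of relevant bounded pieces by $\pp$-values $<\mu_\bullet\le\lambda\le\chi$, which gives $|\cP|\le\chi$. Given $u^*\in[\lambda]^{\le\mu}$, freeness of $\bar g\rest u^*$ provides, for each $\beta$, an $\eps_\beta$ beyond which $g_\beta$ separates $\beta$ from the others. Since $J_\bullet$ is $\kappa^+$-complete and $\kappa_\bullet>\kappa$, we can then pick a single $\eps$, or a decomposition of $u^*$ into $\le\kappa$ pieces, such that each piece lies in a $\cP$-member together with its relevant predecessors.

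The freeness verification then proceeds as in $(*)_6$ of \ref{d17}. We decompose $u^*$ into $\le\kappa$ members of $\cP$ and set $a^*_\alpha$ from the least such piece containing $\alpha$. The conflict relation $W$ has small degree, and we close it to equivalence classes of size $\le\kappa$. Inside each class we add the finitely- or $<\kappa$-many collisions with earlier elements, which $\kappa$-completeness of $J$ (clause $(a)$) absorbs. This yields $f:\Lambda'\to J$ as in Definition \ref{d3}(2). Distinctness gives $|\Lambda|=\lambda$.

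The main obstacle is the covering step: producing $\cP$ of size $\le\chi$ such that every $u^*$ of size $\le\mu$, not merely $<\mu$, is covered by $\le\kappa$ members, each of size $<\mu$ or otherwise controlled in the count $\bullet_{3.2}$. The plan is to obtain this from the $\mu^+$-free scale on $\mu_\bullet$ via clauses $(d)$ and $(e)$, reducing $\cov$ at $\mu$ to $\pp$ bounds below $\mu_\bullet$ in the spirit of Observation \ref{d5}(4). This is where the bookkeeping is most delicate.
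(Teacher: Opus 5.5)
Your plan has a genuine gap, exactly at the step you call the main obstacle: the covering family $\cP$ you need does not exist in the situation the claim is built for.

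The induction of \ref{d17}(2) needs three things from $\cP$. First, $|\cP|\le\chi$. Second, its members $u$ must have size $<\mu$: if $|u\cap\alpha|\ge\mu$ then $S_{\alpha,u}$ may have size $\mu$, and clause $(c)$ says $\bfT_J^+(\mu)>\chi^+$, so the bound $\bullet_{3.2}$ is lost. Third, every small $u^*$ must be covered by $\le\kappa$ members. Clause $(e)$ refutes this when $\chi=\lambda$, which is the case used in \ref{d26}:
\begin{itemize}
\item Fix $\prod_{\eps<\kappa_\bullet}\mu_\eps/J_\bullet$ with true cofinality $\ge\lambda^+$.
\item For $v\in\cP$ put $g_v(\eps)\defeq\sup(v\cap\mu_\eps)$, which is $<\mu_\eps$ for $J_\bullet$-almost all $\eps$.
\item Choose $f$ with $g_v<_{J_\bullet}f$ for every $v\in\cP$.
\item Suppose $\rang(f)$, a set of size $\le\kappa_\bullet<\mu$, were covered by some $\LL v_k:k<\kappa\RR\subseteq\cP$. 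Then $\kappa^+$-completeness of $J_\bullet$ gives a $k$ with $f\le g_{v_k}$ on a $J_\bullet$-positive set, a contradiction.
\end{itemize}
So $\cov(\lambda,\lambda,\mu,\kappa^+)$ fails. Compare \ref{d5}(4): clause $(d)$ controls $\pp$ only strictly below $\mu_\bullet$, while $(e)$ makes it large at $\mu_\bullet$. If the covering did exist, \ref{d17} would apply directly and \ref{d20} would be unnecessary.

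You also misplace the difficulty. Sets of size exactly $\mu$ are harmless: $\mu$ is singular, so singular compactness \cite{Sh:266} reduces $\mu^+$-freeness to $\mu$-freeness, as the proof does at $(*)_{9.1}$. The obstruction already appears at size $\kappa_\bullet$.

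The missing idea is a two-layer composition rather than one global induction.
\begin{enumerate}
\item Extract from $(c)$, by pigeonhole, some $\bar\alpha^\bullet\in{}^\kappa\mu$ with $T^+_J(\bar\alpha^\bullet)>\chi^+$.
\item Apply \ref{d17}(2) only to $\lambda'\in(\mu,\mu_\bullet)$, where $(d)$ and \ref{d5}(4) do supply the covering.
\item Glue $\kappa_\bullet$ such families, via $\eta\mapsto\LL\kappa_\bullet\cdot\eta(i)+\eps:i<\kappa\RR$, into a $\mu^+$-free $\LL\eta^*_\beta:\beta<\mu_\bullet\RR\subseteq\prod_{i<\kappa}\alpha^\bullet_i$.
\item Take from $(e)$ a $\mu^+$-free $<_{J_\bullet}$-increasing $\LL f_\gamma:\gamma<\lambda\RR\subseteq{}^{\kappa_\bullet}\mu_\bullet$.
\item Set $\eta_\gamma(i)\defeq g_i\big(\LL\eta^*_{f_\gamma(\xi)}(i):\xi<\kappa_\bullet\RR\big)$, where $g_i$ is an injection of ${}^{\kappa_\bullet}(\alpha^\bullet_i)$ into $\mu$.
\end{enumerate}
That injection is what clause $(f)$, $|\alpha|^{\kappa_\bullet}<\mu$, is for; you only use $(f)$ through the weaker $|\alpha|^\kappa<\mu$.

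Freeness of a small $u\subseteq\lambda$ then comes from two sources. One is freeness of $\bar f\rest u$. The other is freeness of the $\eta^*_\beta$ for $\beta\in\{f_\gamma(j):\gamma\in u,\ j<\kappa_\bullet\}$, a set of size $<\mu$. Completeness of $J_\bullet$ lets you pick one $a_\gamma\in J$ per $\gamma$.

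You do invoke the free scale from $(e)$, but only as bookkeeping for a covering that cannot exist, not as the outer layer of this composition.
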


\begin{PROOF}{\ref{d20}}
First,
\begin{enumerate}
    \item [$(*)_1$] There exists $\olsi\alpha^\bullet = 
    \LL\alpha_i^\bullet : i < \kappa\RR \in {}^\kappa\mu$ such that 
    $T_J^+(\olsi\alpha^\bullet) > \chi^+$.
\end{enumerate}
[Why? by assumption $(c)$ there exists $\Lambda \subseteq \bigcup\limits_{b \in J^+} {}^b\mu$ witnessing $\bfT_J^+(\mu) > \chi^+$.

Now let $\LL \mu_i : i < \kappa\RR$ be increasing with limit $\mu$; so for every $\eta \in \Lambda$ there is an increasing function $h_\eta : \kappa \to \kappa$ such that $i \in \dom(\eta) \Rightarrow \eta(i) < \mu_{h_\eta(i)}$. As $\big| \{h_\eta : \eta \in \Lambda\} \big| \leq 2^\kappa \leq \chi$, clearly for some $h \in {}^\kappa\kappa$ the set $\Lambda_h \defeq \{\eta \in \Lambda : h_\eta = h\}$ has cardinality $\geq \chi^+$. 

So let $\alpha_i^\bullet \defeq \mu_{h(i)}$ witness $(*)_1$.

\sn
Second,
\begin{enumerate}
    \item [$(*)_2$] We can apply \ref{d17}(2) for each $\lambda' \in (\mu,\mu_\bullet)$, with $\kappa,\kappa,J,\mu,\mu,\lambda',\lambda'$ here standing in for $\kappa_*,\kappa,J,\mu_*,\mu,\lambda,\chi$ there.
\end{enumerate}
We have to check that all the assumptions hold.

\mn
\textbf{Clause (a):} Holds by clause $(a)$ of our assumptions.

\mn
\textbf{Clause (b):} Holds by clause $(b)$ of our assumptions, recalling $\mu < \lambda' < \mu_\bullet$.

\mn
\textbf{Clause (c)$'$:} Holds by $(*)_1$.

\mn
\textbf{Clause (d):} Holds by the second phrase in assumption $(d)$ and \ref{d5}(4).

\mn
\textbf{Clause (e),(e)$'$:} Directly implied by assumption $(f)$. 

\sn
So we get
\begin{enumerate}
    \item [$(*)_3$] There is a $(J,\mu^+)$-free $\Lambda \subseteq \prod\limits_{i<\kappa} d_i$ of cardinality $\mu_\bullet$. 
\end{enumerate}
[Why? Recalling $\kappa_\bullet \defeq \cf(\mu_\bullet) < \mu < \mu_\bullet$, there exists an increasing sequence of cardinals 
$\LL \lambda_\eps : \eps <\kappa_\bullet\RR$ with $\lambda_0 > \mu$ and limit $\mu_\bullet$.

Applying the conclusion of \ref{d17}(1)+(2) which we just obtained in {$(*)_2$}, for each $\eps < \kappa_\bullet$ there exists a $\mu$-free subset $\Lambda_\eps \subseteq {}^\kappa\mu$ of cardinality $\lambda_\eps$.
For each $\eta \in \Lambda_\eps$, define $\nu_\eta^\eps \defeq \LL \kappa_\bullet \cdot \eta(i) + \eps : i < \kappa\RR$.
Now $\Lambda = \{\nu_\eta^\eps : \eps < \kappa_\bullet,\ \eta \in \Lambda_\eps\}$ as promised.]

\begin{enumerate}
    \item [$(*)_4$] Let $\LL \eta_\alpha^* : \alpha < \mu_\bullet\RR$ list the members of $\Lambda$ without repetition.
\end{enumerate}

Third, applying Definition \ref{d3}(3) and clause (e) of the assumption, recalling \cite[Ch.II, 3.1]{Sh:g},
\begin{enumerate}
    \item [$(*)_5$] There is a $<_{J_\bullet}$-increasing sequence 
    $\bar f = \LL f_\gamma : \gamma < \lambda^+\RR \subseteq {}^{\kappa_\bullet} \mu_\bullet$ (hence all functions in the sequence are pairwise $\neq_{J_\bullet}$).
\end{enumerate}

\sn
By \cite{Sh:g}, 
\begin{enumerate}
    \item [$(*)_6$] Without loss of generality, $\bar f \rest \lambda = \LL f_\gamma : \gamma < \lambda\RR$ is a $J_\bullet$-free sequence and 
    $\gamma < \lambda \Rightarrow f_\gamma < f_\lambda$.
\sn
    \item [$(*)_7$] For each $i < \kappa$, we choose a one-to-one function $g_i$ such that
    \begin{enumerate}
        \item  $\dom(g_i) \defeq \bigcup\limits_{b \in J_\bullet^+} 
        {}^b(\alpha_i^\bullet)$
\sn
        \item  $\rang(g_i) \subseteq \mu$
    \end{enumerate}
\end{enumerate}
[Why does such a $g_i$ exist? 

The set $W_i \defeq \bigcup\limits_{b \in J_\bullet^+} 
{}^b(\alpha_i^\bullet)$ is a subset of $\bigcup\limits_{u \subseteq \kappa_\bullet} 
{}^u(\alpha_i^\bullet)$, {which} {has cardinality $|\alpha_i^\bullet|^{\kappa_\bullet}$;} this is $<\mu$ by {clause $(f)$ of our assumptions}. So clearly there is an injection from $W_i$ into $\mu$; {this satisfies {$(*)_7$}, and so it will be our $g_i$.]}

\medskip
\begin{enumerate}
    \item [$(*)_8$] For each $\gamma < \lambda$ we choose $\eta_\gamma \in {}^\kappa\mu$ as follows:

    For each $i < \kappa$ let $\nu_{\gamma,i} \defeq \big\LL \eta_{f_\gamma(\xi)}^*(i) : \xi < \kappa_\bullet \big\RR \in {}^{\kappa_\bullet}(\alpha_i^\bullet)$. 
    Now let
    $$
    \eta_\gamma \defeq  \big\LL g_i (\nu_{\gamma,i} ) : i < \kappa \big\RR.
    $$
\end{enumerate}
Lastly,
\begin{enumerate}
    \item [$(*)_9$] $\bar\eta = \LL\eta_\gamma : \gamma < \lambda\RR$ is as promised.
\end{enumerate}
Why? Clearly $\bar\eta$ is a sequence of members of ${}^\kappa\mu$, without repetition. Let $u \in [\lambda]^{\leq\mu}$, and we shall prove that $\bar\eta \rest u$ is free.

\begin{enumerate}
    \item [$(*)_{9.1}$] Without loss of generality $|u| < \mu$.
\end{enumerate}
[Why? By \cite{Sh:266}.]

\begin{enumerate}
    \item [$(*)_{9.2}$] We can find $\LL b_\gamma^* : \gamma \in u\RR \subseteq J_\bullet$ such that \underline{if} $\gamma \neq \eps \in u$ and $j \in \kappa_\bullet \setminus \big( b_\gamma^* \cup b_\eps^* \big)$ \underline{then} $f_\gamma(j) \neq f_\eps(j)$.
\end{enumerate}
[Why? By $(*)_6$.]

\begin{enumerate}
     \item [$(*)_{9.3}$] Let $W \defeq \big\{ f_\gamma(j) : \gamma \in u,\ j < \kappa_\bullet \big\}$ (so $W \in [\mu_\bullet]^{<\mu}$). Hence there exists a sequence $\LL a_\beta : \beta \in W\RR \in {}^W\!\!J$ witnessing that $\bar f \rest W$ is free.
\sn
     \item [$(*)_{9.4}$] We can choose $\LL a_\gamma : \gamma \in u\RR$ such that
     \begin{itemize}
         \item $a_\gamma \in J$
\sn
         \item $\{ j < \kappa_\bullet : a_{f_\gamma(j)} \subseteq a_\gamma\} \in J_\bullet^+$.
     \end{itemize}
\end{enumerate}
[Why? As $b_\gamma \in J_\bullet^+$, $b_\gamma^* \in J_\bullet$, and $J_\bullet$ is $\kappa^+$-complete.]

\begin{enumerate}
    \item [$(*)_{9.5}$] If $\gamma \neq \eps \in u$ and $j \in \kappa \setminus \big( a_\gamma \cup a_\eps \big)$ \underline{then} $\eta_\gamma(j) \neq \eta_\eps(j)$.
\end{enumerate}
[Why?  Put clauses $(*)_{9.1}$-$(*)_{9.4}$ together.] 

\sn
Now we are done.
\end{PROOF}

\bn
\begin{claim}\label{d26}
If $\mu$ is strong limit, $\kappa \defeq \cf(\mu) < \mu$, and $\lambda \in (\mu,2^\mu)$, \underline{then} there is a $\mu^+$-free $\lambda \subseteq {}^\kappa\mu$ of cardinality $\lambda$.
\end{claim}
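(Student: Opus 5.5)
Given $\lambda\in(\mu,2^\mu)$, the plan is to obtain the free set from \ref{d17} or \ref{d20} with $J\defeq[\kappa]^{<\kappa}$; this ideal is $\kappa$-complete because $\kappa$ is regular. Since $\mu$ is strong limit, $\alpha<\mu\Rightarrow|\alpha|^\kappa<\mu$. By \ref{d5}(3) we have $\mu^\kappa=2^\mu$. The natural first attempt is \ref{d17}(1) with $\kappa_*\defeq\kappa$ and $\mu_*\defeq\mu$. Clauses (a), (b), (e) are then immediate, and (c) reduces to choosing $\chi\ge\lambda$ with $\chi^+<2^\mu$, or $\chi^+\le 2^\mu$. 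The real content is clause (d), namely $\cov(\chi,\lambda,\mu,\kappa^+)$. By \ref{d5}(4) (with $\sigma\defeq\kappa^+$, which is regular uncountable) this amounts to
$$
\chi\ge\sup\{\pp_{\cf(\partial)\text{-comp}}(\partial):\partial\in[\mu,\lambda],\ \cf(\partial)\in[\kappa^+,\mu)\}.
$$
So I will split into cases according to whether this pcf supremum lies below $2^\mu$.

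\textbf{Case 1:} for every $\partial\in(\mu,\lambda]$ with $\kappa<\cf(\partial)<\mu$ we have $\pp_{\cf(\partial)}(\partial)<2^\mu$, and moreover the supremum $\chi_0$ of these values together with $\lambda$ satisfies $\chi_0^+<2^\mu$. Then I put $\chi\defeq\chi_0+\lambda$ and apply \ref{d17}(1) directly. Here I will have to check that the supremum is not pushed up to $2^\mu$; since the relevant $\partial$ range over an interval of length at most $\lambda<2^\mu$, some pcf bookkeeping (for instance $\pp$ of a limit being controlled by $\pp$ of smaller cardinals, as in \cite{Sh:g}) should handle this.

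\textbf{Case 2:} otherwise. Let $\mu_\bullet\le\lambda$ be the first singular cardinal above $\mu$ with $\kappa_\bullet\defeq\cf(\mu_\bullet)\in[\kappa^+,\mu)$ and $\pp_{\kappa_\bullet}(\mu_\bullet)>\lambda^+$, taking $\pp$ for the $\kappa^+$-complete ideal $J_\bullet$ given by $\kappa_\bullet$-completeness. By minimality, clause (d) of \ref{d20} holds below $\mu_\bullet$, and clause (e) holds by the choice of $\mu_\bullet$. Clause (f) holds because $\mu$ is strong limit and $\kappa_\bullet<\mu$. Clause (c), $\bfT_J^+(\mu)>\chi^+$, follows from \ref{d5}(6) and the tree argument in the proof of \ref{d17}(1), provided $\chi^+<2^\mu$; here I take $\chi\defeq\lambda$. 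Then \ref{d20} yields a $\mu^+$-free set of size $\lambda$.

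It remains to show that if no such $\mu_\bullet$ exists, we are in Case 1. I expect this to be the main obstacle: making the dichotomy exhaustive. In particular I must handle a $\pp$ value that jumps above $\lambda^+$ but equals $2^\mu$ or stays below it, and the edge case $\lambda^+=2^\mu$, where no $\chi$ with $\chi^+<2^\mu$ exists. For that edge case I would first try \ref{d17}(2), using $T_J^+(\mu)=(2^\mu)^+$ from \ref{d5}(3), which is $>\chi^+$ when $\chi\defeq\lambda$. For the jump case I would use \cite{Sh:266} / \cite[Ch.II]{Sh:g}: a $<_{J_\bullet}$-increasing sequence of length $\lambda^+$ in ${}^{\kappa_\bullet}\mu_\bullet$ exists as soon as $\pp>\lambda^+$, and this is all that \ref{d20} actually consumes.
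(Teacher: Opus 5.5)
Your architecture matches the paper's. You use \ref{d17}(1) with $\kappa_*=\kappa$, $\mu_*=\mu$, $J=[\kappa]^{<\kappa}$ when nothing obstructs $\cov$. Otherwise you use \ref{d20} at the least singular $\mu_\bullet\in(\mu,\lambda]$ with $\cf(\mu_\bullet)\in[\kappa^+,\mu)$ and large $\pp$.

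The gap is that your case split is drawn at the wrong threshold, and the exhaustiveness you postpone to the end is actually false. Consider $2^\mu=\lambda^+$ together with some relevant $\partial\in(\mu,\lambda]$ having $\pp_{\cf(\partial)\text{-comp}}(\partial)=\lambda^+$. This happens automatically for $\partial=\lambda$ whenever $\lambda$ is itself singular with $\cf(\lambda)\in[\kappa^+,\mu)$, since $\pp(\lambda)\ge\lambda^+$ always, and your argument does nothing to exclude it. In this configuration:
\begin{itemize}
\item Every relevant $\pp(\partial)$ is $\le\partial^{\cf(\partial)}\le 2^\mu=\lambda^+$, so no $\mu_\bullet$ with $\pp>\lambda^+$ exists and your Case 2 is empty.
\item Case 1 needs $\chi\ge\lambda^+$ together with $\chi^+\le 2^\mu$, which is impossible.
\item Your fallback, \ref{d17}(2) with $\chi=\lambda$, still requires $\cov(\lambda,\lambda,\mu,\kappa^+)$. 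By \ref{d5}(4) this fails precisely because $\pp(\partial)>\lambda$.
\end{itemize}

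The missing point is to split at $\lambda^+$ inclusively, as the paper does with $\Theta_{\mu,\kappa}=\{\partial\in(\mu,\lambda]:\cf(\partial)\in[\kappa^+,\mu),\ \pp_{\cf(\partial)\text{-comp}}(\partial)\ge\lambda^+\}$.

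If $\Theta_{\mu,\kappa}=\varnothing$, all relevant $\pp$ values are $\le\lambda$. Then \ref{d5}(4) gives $\cov(\lambda,\lambda,\mu,\kappa^+)$, and \ref{d17}(1) applies with $\chi=\lambda$. Its clause (c) is just $\mu^\kappa=2^\mu\ge\lambda^+$. You never need $\chi>\lambda$ or any control of suprema against $2^\mu$, so your Case 1 bookkeeping disappears.

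If $\Theta_{\mu,\kappa}\neq\varnothing$, take $\mu_\bullet=\min\Theta_{\mu,\kappa}$ and apply \ref{d20} with $\chi=\lambda$.
\begin{itemize}
\item Clause (e) there only asks for a $<_{J_\bullet}$-increasing sequence of length $\lambda^+$. Such a sequence exists as soon as $\pp_{J_\bullet}(\mu_\bullet)\ge\lambda^+$, not only when $\pp>\lambda^+$ as you wrote in your last sentence. Recognizing this is exactly what closes the gap.
\item Clause (c) there needs only $\lambda^+\le 2^\mu$, not $\chi^+<2^\mu$. The tree argument from \ref{d17}(1) already yields an almost disjoint family of size $\mu^\kappa=2^\mu$. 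So the edge case $2^\mu=\lambda^+$ causes no trouble in this branch either.
\end{itemize}
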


\begin{PROOF}{\ref{d26}}
Let $\Theta_{\mu,\kappa} \defeq \big\{\chi \in (\mu,\lambda] : \cf(\chi) \in [\kappa^+,\mu) \text{ and } \pp_{\cf(\chi)\text{-complete}}(\chi) \geq \lambda^+ \big\}$.

If $\Theta_{\mu,\kappa}$ is empty then we can apply \ref{d17}. 

\sn
[Why? Choose $\kappa_* \defeq \kappa$, $\mu_* \defeq \mu$, $J \defeq J_\kappa^\bd$, and we have to verify the assumptions of \ref{d17}. Clauses $(a)$, $(b)$, and $(c)$ are obvious. Clause $(e)$ says `$\alpha < \mu \Rightarrow |\alpha|^\kappa < \mu$,' and this follows from $\mu$ being strong limit. 
Lastly, clause $(d)$ holds by \ref{d3}(4).]

So assume $\Theta_{\mu,\kappa} \neq \varnothing$. Let $\mu_\bullet \defeq \min\Theta_{\mu,\kappa}$, and apply \ref{d20}.

\sn
[Why can we do this? We should check assumptions \ref{d20}$(a)$-$(f)$. Choose $\chi \defeq \lambda$, $\kappa_\bullet \defeq \cf(\mu_\bullet)$, and $J_\bullet$ a $\kappa_\bullet$-complete ideal on $\kappa_\bullet$ satisfying $\pp_{J_\bullet}(\mu_\bullet) \geq \lambda^+$.

Clauses $(a)$ and $(b)$ are obvious. 

Clause $(c)$ says `$T_J^+(\mu) > \chi^+$', which holds because $J \defeq J_\kappa^\bd$ and $\mu$ is strong limit of cofinality $\kappa$.

In clause $(d)$, the first statement ($\cf(\mu_\bullet) \in \big[ \kappa^+,\mu \big)$) holds by the definition of $\Theta_{\mu,\kappa}$ and choice of $\mu_\bullet$. The second statement holds by the same reasoning.

Clause $(e)$ holds as $\mu_\bullet \in \Theta_{\mu,\kappa}$ and the choice of $J_\bullet$.

Clause $(f)$ holds as $\mu > \kappa_\bullet$ is strong limit.]
\end{PROOF}

\mn
\begin{discussion}\label{d29}
1) In \ref{d20}, there is no harm in adding ``$J_\bullet$ is $\kappa_\bullet$-complete."

\mn
2) Furthermore, if we add `$\kappa_\bullet > 2^\kappa$' then we can weaken clause \ref{d20}(c) to `$\bfT_J^+(\mu) > \chi^+$.'
\end{discussion}

\bibliographystyle{amsalpha}
\bibliography{shlhetal}
\end{document}